\documentclass[12pt]{amsart}
\usepackage[shadow]{todonotes}
\usepackage{geometry}
\geometry{a4paper}
\usepackage[utf8]{inputenc}
\usepackage[english]{babel}
\usepackage{amsmath,amsfonts,amssymb,amsthm}
\usepackage{bbm}
\usepackage{hyperref}
\usepackage{verbatim}
\usepackage{tikz}
\usetikzlibrary{cd}
\usepackage{mathdots,mathrsfs,microtype,mathtools}

\theoremstyle{plain}
\newtheorem{teo}{Theorem}[section]
\newtheorem{cor}[teo]{Corollary}
\newtheorem{prop}[teo]{Proposition}
\newtheorem{lem}[teo]{Lemma}
\newtheorem{claim}[teo]{Claim}

\theoremstyle{definition}
\newtheorem{defin}[teo]{Definition}
\theoremstyle{remark}

\newcommand{\system}[1]{\mbox{\fontfamily{cmss}\fontshape{n}\fontseries{m}\selectfont#1}}

\newcommand{\ZFC}{\system{ZFC}}
\newcommand{\AC}{\system{AC}}

\DeclareMathOperator{\Fml}{Fml}
\DeclareMathOperator{\cof}{cof}
\DeclareMathOperator{\crt}{crt}

\DeclareMathOperator{\Ord}{Ord}
\DeclareMathOperator{\Ult}{Ult}

\DeclareMathOperator{\ran}{ran}

\DeclareMathOperator{\Ithree}{\mathsf{I3}}
\DeclareMathOperator{\Itwo}{\mathsf{I2}}
\DeclareMathOperator{\Ione}{\mathsf{I1}}
\DeclareMathOperator{\Izero}{\mathsf{I0}}

\date{\today} 

\begin{document}
\title{The iterability hierarchy above $\Ithree$}
\author{Alessandro Andretta}
\address{Dipartimento di Matematica, Università di Torino, via Carlo Alberto 10, 10123 Torino---Italy}
\email{alessandro.andretta@unito.it}
\author{Vincenzo Dimonte}
\address{Università degli Studi di Udine, Dipartimento di Scienze Matematiche, Informatiche e Fisiche, via delle Scienze, 206, 33100 Udine---Italy}
\email{vincenzo.dimonte@gmail.com}
\urladdr{https://users.dimi.uniud.it/~vincenzo.dimonte/}
\subjclass[2010]{03E55}

\keywords{Rank-into-rank embeddings, linear iterations}

%

\begin{abstract}
In this paper we introduce a new hierarchy of large cardinals between $\Ithree$ and $\Itwo$, the iterability hierarchy, and we prove that every step of it strongly implies the ones below.
\end{abstract}
\maketitle

\section{Introduction}

In the late 70's and early 80's there was a flurry of activity around rank-into-rank axioms, a new kind of large cardinal hypotheses at the top of the hierarchy. 
They were $\Ithree$ (the existence of an elementary embedding $j \colon V_\lambda\prec V_\lambda$), $\Itwo$ (the existence of an elementary embedding $j \colon V\prec M$ with $V_\lambda\subseteq M$) and $\Ione$ (the existence of an elementary embedding $j \colon V_{\lambda+1}\prec V_{\lambda+1}$). 
The initial belief of the majority of set theorists was that their existence was eventually going to be disproved in \( \ZFC \), and hence the $\mathsf{I}$ suggesting inconsistency, but a result by Martin started to change the mood: he proved in~\cite{Martin} that a hypothesis strictly below $\Itwo$ and above $\Ithree$ implies the determinacy of $\boldsymbol{\Pi}^1_2$ sets. 
The hypothesis used by Martin was the existence of an \emph{iterable} (see Section~\ref{sec:iterate}) \( j \colon V_  \lambda \prec V_  \lambda \)---we call this large cardinal axiom \( \Ithree_\infty \).
The excitement grew when Woodin, a few years later, proved the consistency of the Axiom of Determinacy using an axiom stronger than $\Ione$, called $\Izero$. 
But in the following years Woodin, building on work of Martin and Steel, showed that determinacy had much lower consistency strength, so \( \Ithree_\infty \) slowly fell into oblivion. 
Interest in rank-into-rank axioms re-emerged twenty years ago, thanks to Laver's results on the algebra of the $\Ithree$-embeddings, and more recently after Woodin's extensive work on $\Izero$. 
Moving to the present age, the researcher that wants to know more about \( \Ithree_\infty \), however, is going to be disappointed: the only reference is Martin's original paper, that is very terse and lacking in details. 
For example, it says that $\Itwo$ strictly implies \( \Ithree_\infty \), but it provides no proof for that. 
Even the very definition of iterable embedding is not fully satisfactory, as it is founded on operations whose validity has not been fully provided in print (as in Lemma~\ref{lem:application}). 
Also, is \( \Ithree_\infty \) strictly stronger than $\Ithree$?

The aim of this paper is to approach iterability of $\Ithree$-embeddings in all the details and in a modern way, thanks to the better understanding of rank-into-rank axioms that decades of work has given us. 
In Section~\ref{pre} the current knowledge of rank-into-rank axioms is described, introducing the concept of ``strong implication'', more suitable for such axioms than the usual notion of strict implication. 
In Section~\ref{sec:iterate} we define exactly what it is an iterable $\Ithree$-embedding, and we introduce a new hierarchy of axioms, depending on how long the $\Ithree$-embedding can be iterable. 
It turns out that, like for iterations of a single measure or an extender, if an $\Ithree$-embedding is $\omega_1$-iterable, then it is iterable for any length, and this is strictly (in fact: strongly) weaker than $\Itwo$. 
Finally, in Section~\ref{hierarchy} we prove that every step in the iterability hierarchy strongly implies the ones below, with even one more step at limit points. 
The final picture is therefore the following:
\[
\begin{tikzpicture}
 \node (I2){$\Itwo $};
 \node (iterable)[below of = I2]{$\Ithree_\infty$}
	edge[<-](I2);
 \node (almostiterable)[below of = iterable]{$\forall \alpha<\omega_1\ \Ithree_\alpha$}
 edge[<-](iterable);
 \node (dots)[below of = almostiterable]{$\vdots$}
 edge[<-](almostiterable);
 \node (omegaomega)[below of = dots]{$\Ithree_\omega$}
 edge[<-](dots);
 \node (lessomegaomega)[below of = omegaomega]{$\Ithree_{<\omega}$}
 edge[<-](omegaomega);
 \node (almostomegaomega)[below of = lessomegaomega]{$\forall n<\omega \Ithree_n$}
 edge[<-](lessomegaomega);
 \node (dots2)[below of = almostomegaomega]{$\vdots$}
 edge[<-](almostomegaomega);
 \node (omega2)[below of = dots2]{$\Ithree_2$}
 edge[<-](dots2);
 \node (omega1)[below of = omega2]{$\Ithree_1$}
 edge[<-](omega2);
 \node (base)[below of = omega1]{$\Ithree \leftrightarrow \Ithree_0 $}
 edge[<-](omega1);
\end{tikzpicture}
\]
\textbf{Acknowledgments}. The second author would like to thank the program ``Rita Levi Montalcini 2013'' for the support, and the Department of Mathematics of the University of Turin for its hospitality.

\section{Preliminaries}\label{pre}

To avoid confusion or misunderstandings, all notations and standard basic results are collected here.

If $M$ and $N$ are sets or classes, $j \colon M\prec N$ denotes that $j$ is an elementary embedding from $M$ to $N$, that is a function such that for any formula $\varphi$ and any $x\in M$, $M \vDash\varphi ( x )$ iff $N\vDash\varphi ( j (x) )$; when this holds only for $\Sigma_n$ formul\ae, we write $j \colon M\prec_n N$.
The case in which $j$ is the identity, i.e., if $M$ is an elementary (or $\Sigma_n$-elementary) submodel of $N$, is simply written as $M\prec N$ (or $M\prec_n N$).

If $M\vDash\AC$ or $N\subseteq M$ and $j \colon M\prec N$ is not the identity, then it moves at least one ordinal; the least such ordinal is the \emph{critical point} of $j$ and it is denoted by $\crt(j)$.
Let $j$ be an elementary embedding and $\kappa = \crt(j)$. 
Define $\kappa_0=\kappa$ and $\kappa_{n+1}=j(\kappa_n)$. 
Then $\langle \kappa_n : n\in\omega\rangle$ is the \emph{critical sequence} of $j$.
 
Kunen~\cite{Kunen} proved that if $M=N=V_\eta$ for some ordinal $\eta$, and $\lambda$ is the supremum of the critical sequence, then $\eta$ cannot be bigger than $\lambda+1$ (and of course cannot be smaller than $\lambda$).
Kunen's result actually does not say anything about the cases $\eta=\lambda$ or $\eta=\lambda+1$.
Therefore we can introduce the following hypotheses without fearing an immediate inconsistency:

\begin{description}
 \item[$\Ithree$] There exists $j \colon V_\lambda\prec V_\lambda$, where $\lambda$ is the supremum of the critical sequence of $j$.
 \item[$\Ione$] There exists $j \colon V_{\lambda+1}\prec V_{\lambda+1}$, where $\lambda$ is the supremum of the critical sequence of $j$.
\end{description}
We will be flexible in handling this and other rank-into-rank notations, but the meaning will be always clear. 
For example, $\Ithree ( \lambda )$ means that there is a $ j \colon V_\lambda\prec V_\lambda$ (so it is a property of $\lambda$), while $\Ithree ( j ) $ indicates that $ j \colon V_\lambda\prec V_\lambda$.
Sometimes we write $\Ithree(j,\lambda)$ to underline the role of $\lambda$, etc. 

Another way to reach the apogee of the large cardinal hierarchy is via the usual template asserting the existence of an elementary embedding $j \colon V\prec M\subseteq V$ with \( M \) resembling \( V \).

\begin{defin}
A cardinal $\kappa$ is:
\begin{itemize}
\item
\emph{superstrong} iff there exists $j \colon V\prec M$ such that $\crt(j)=\kappa$ and $V_{\kappa_1}\subseteq M$, where $\kappa_1$ is the second element of the critical sequence of $j$;
\item
\emph{$n$-superstrong} iff there exists $j \colon V\prec M$ such that $\crt(j)=\kappa$ and $V_{\kappa_n}\subseteq M$, where $\kappa_n$ is the $n+1$-th element of the critical sequence of $j$;
\item
\emph{$\omega$-superstrong} iff there exists $j \colon V\prec M$ such that $\crt(j) = \kappa$ and $V_{\lambda}\subseteq M$, where $\lambda$ is the supremum of the critical sequence of $j$.
\end{itemize}
\end{defin} 

If $\kappa$ is $\omega$-superstrong as witnessed by $j,\lambda$, then $j(\lambda)=\lambda$, and therefore $\Ithree$ holds. 
Moreover, like other large cardinals, it can be formulated as the existence of an extender (see~\cite{Kanamori}), in this case a $( \kappa , \lambda ) $-extender $E$ such that $ V_\lambda \subseteq \Ult ( V , E ) $, where $\lambda$ is the supremum of the $\kappa_n$'s. 

It is possible to pinpoint exactly how much $\omega$-superstrongness is stronger than $\Ithree$, but for this we need to clarify what we mean by ``being stronger'':

\begin{defin}\label{strongly}
Let $\Phi ( j , \lambda ) $ and $ \Psi ( j , \lambda ) $ be two large cardinal properties as above. 
Then
\begin{itemize}
\item $ \Phi $ implies $\Psi$ iff $\ZFC\vdash \Phi ( j , \lambda ) \rightarrow \Psi ( j , \lambda ) $;
\item $ \Phi $ strictly implies $\Psi$ iff $\Phi$ implies $\Psi$ and $\ZFC\vdash \Phi ( j , \lambda ) \rightarrow \exists \lambda'\ \exists j'\ ( \Psi ( j' , \lambda ' ) \wedge \neg ( \Phi ( j' , \lambda' ) ) ) $;
\item $\Phi$ strongly implies $\Psi $ iff $ \Phi $ implies $ \Psi $ and $\ZFC\vdash \Phi ( j ,\lambda ) \rightarrow\exists \lambda'<\lambda\ \exists j'\ \Psi (j',\lambda')$.
\end{itemize}
\end{defin}

Note that strong implication yields strict implication: If $\Phi$ strongly implies $\Psi$, let $\lambda$ be the smallest such that there is a $j$ such that $ \Phi (j,\lambda)$ holds. 
Then there are $j'$ and $\lambda'<\lambda$ such that $ \Psi (j',\lambda')$ holds, and since $\lambda$ was the smallest for $\Phi$, then $ \Phi (j',\lambda')$ does not hold. 
The difference between ``strict'' versus ``strong'' is a consequence of the peculiar nature of rank-into-rank axioms. 
For weaker axioms, usually the focal cardinal is the critical point of an elementary embedding, and such cardinal is measurable. 
If, assuming some property $\Phi $ of $\kappa$, we can find a $\kappa'<\kappa$ that satisfies a property $\Psi$, the reasoning is as follows: let $\kappa$ be the smallest cardinal that satisfies $ \Phi $; find $\kappa'<\kappa$ that satisfies $\Psi$; then $\kappa'$ must not satisfy $\Phi $; then $V_\kappa$ is a model of \ZFC{} where no cardinal satisfy $ \Phi $, but some cardinal satisfy $ \Psi $, so the consistency strength of $\Phi $ is stronger than that of $\Psi$. 
So actually strict implication and strong implication are the same.

In the rank-into-rank case, the focal cardinal is $\lambda$, as for the same $\lambda$ there can be many different embeddings $j$ and critical points of them (see discussion after Lemma~\ref{lem:operations}). 
Therefore, given an embedding $j \colon V_\lambda\prec V_\lambda$ satisfying a property $\Phi$, if we find another embedding $j' \colon V_\lambda\prec V_\lambda$ with $\crt(j') < \crt(j)$ that satisfies a property $\Psi$, this would prove that the two properties are actually different, so it would prove that $ \Phi $ strictly implies $\Psi$: Let $j,\lambda$ satisfy $ \Phi $ with $\crt(j)$ the smallest possible. 
Then if we can find $j':V_\lambda\prec V_\lambda$ with $\crt(j')<\crt(j)$, and so $ \Phi ( j' , \lambda ) $ does not hold. 
But this falls short of actually proving that the consistency strength of one is stronger than the other. 
As $V_\lambda\vDash\ZFC$, if $\lambda$ is least for $\Phi$ and $\Phi$ strongly implies $ \Psi $, then $V_\lambda$ is a model for $\Psi$ and for $\neg \Phi $.

There is an alternative way to look at $\Ione$, as a higher-order $\Ithree$-embedding. 
A second-order language is formally a two-sorted language, where one sort is interpreted by elements of the model, and the other sort is interpreted by subsets of the model (usually the first sort is lowercase and the second is uppercase). 
Both variables and parameters have two sorts. 
So, for example, if $\varphi$ has no quantifiers, $V_\lambda\vDash\forall X\ \forall x\ \varphi(x,X,a,A)$ means ``$\forall X\subseteq V_\lambda\ \forall x\in V_\lambda\ \varphi(x,X,a,A)$'', where $a\in V_\lambda$ and $A\subseteq V_\lambda$. 
If a second-order formula does not have quantifiers with uppercase variables, then it is $\Delta^1_0$ (or $\Sigma^1_0$ or $\Pi^1_0$). 
In a similar way to first-order formul\ae, a formula is $\Sigma^1_n$ if there are $n$ alternations of $\exists$ and $\forall$ quantifiers with uppercase variable, the first one being a $\exists$. 

Now, as $V_{\lambda+1}$ is just $\mathscr{P}(V_{\lambda})$, if there is a $j \colon V_\lambda\prec V_\lambda$, for any $X\in V_{\lambda+1}$ we can define $j^+(X)=\bigcup_{\alpha<\lambda}j(X\cap V_\alpha)$, so that $j$ extends to $j^+ \colon V_{\lambda+1}\to V_{\lambda+1}$. 
With this in mind, it makes sense now to ask whether $j$ preserves second-order formul\ae, i.e., whether $V_\lambda\vDash\varphi(a,A)$ iff $V_\lambda\vDash\varphi(j(a), j^+(A))$. 
If $j$ preserves all second-order formul\ae then, clearly, $j^+ \colon V_{\lambda+1}\prec V_{\lambda+1}$. 
We say that $j$ is a $\Sigma^1_n$-elementary embedding if it preserves $\Sigma^1_n$ formul\ae. 
If $j$ is a $\Sigma^1_n$-elementary embedding for any $n\in\omega$, then $j^+$ witnesses $\Ione$. 
The other direction also holds: if $j\colon V_{\lambda+1}\prec V_{\lambda+1}$, then the extension of $j\upharpoonright V_\lambda$ is $j$ itself (see Lemma 3.4 in~\cite{Dimonte}), so every $\Ithree$-embedding, if it can be extended to a $\Ione$-embedding, it can be extended in a unique way.
It is a standard fact that if $j$ witnesses $\Ithree$, then it is a $\Sigma^1_0$-elementary embedding (this is a consequence of the more general Lemma~\ref{lem:application}). 
The general concept of $\Sigma^1_n$-elementary embeddings appears first in the paper by Laver~\cite{Laver}, building on Martin's seminal work.

In order to state and prove the results that follow, we introduce the following 

\begin{defin}\label{def:E(lambda)}
\( \mathscr{E} (  \lambda ) \) is the set of all \( j \colon V_  \lambda  \prec V_  \lambda \), and \( \mathscr{E}_n (  \lambda ) \) is the set of all \( j \in \mathscr{E} (  \lambda ) \) that are \( \Sigma ^1_n \)-elementary.
\end{defin}

Therefore \( \mathscr{E} (  \lambda )  = \mathscr{E}_0 (  \lambda )  \supseteq \mathscr{E}_1 (  \lambda ) \supseteq \mathscr{E}_2 (  \lambda ) \supseteq \dots \),  \( \Ithree (  \lambda  ) \) means \( \mathscr{E} (  \lambda ) \neq \emptyset \), and \( \Ione (  \lambda ) \) is \( \bigcap_n \mathscr{E} _n (  \lambda  ) \neq \emptyset \).

Now we can characterize $\omega$-superstrongness within this template:

\begin{teo}[Martin,~\cite{Martin}]\label{th:I2}
Any $j \in \mathscr{E}_1 (  \lambda )$  can be extended to an $i \colon V\prec M$ such that $V_\lambda\subseteq M$.
Conversely, if $i \colon V\prec M$ is such that $V_\lambda\subseteq M$, with $\lambda$ supremum of the critical sequence, then $i \upharpoonright V_\lambda \in \mathscr{E}_1 (  \lambda )$.
\end{teo}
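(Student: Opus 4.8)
The plan is to pass through the derived extender. Let $j\in\mathscr{E}_1(\lambda)$, put $\kappa=\crt(j)$ and let $\langle\kappa_n:n\in\omega\rangle$ be its critical sequence, so $\lambda=\sup_n\kappa_n$. I would form the ``$V_\lambda$-extender'' $E_j$ derived from $j$: its seeds are the finite $s\in[\lambda]^{<\omega}$, and $X\in(E_j)_s$ iff $s\in j(X)$, where --- since $\max s$ may exceed the first $\kappa_n$'s --- this is read with the usual ``staircase'', taking $X\subseteq[\kappa_{n-1}]^{|s|}$ whenever $\max s<\kappa_n$, so that $j(X)\subseteq[\kappa_n]^{|s|}$ genuinely contains $s$. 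Let $M=\Ult(V,E_j)$ with {\L}o\'s-elementary $i_{E_j}\colon V\to M$. Because the seeds are cofinal in $V_\lambda$ and $E_j$ coheres with $j$ (i.e.\ $X\in(E_j)_s\Leftrightarrow s\in j(X)$), standard extender bookkeeping gives $V_\lambda\subseteq M$ and $i_{E_j}\upharpoonright V_\lambda=j$; moreover for each $\eta<\lambda$ the restriction $E_j\upharpoonright\eta$ is a genuine $(\kappa,\cdot)$-extender, derived from the honest embedding $j\upharpoonright V_\beta$ for $\beta$ large enough below $\lambda$, so $\Ult(V,E_j\upharpoonright\eta)$ is well founded and $M$ is the direct limit of these along $\eta\to\lambda$. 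So everything reduces to: $M$ is well founded; once we know this, its transitive collapse furnishes the required $i\colon V\prec\overline{M}$ with $V_\lambda\subseteq\overline{M}$ extending $j$.

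The heart is the equivalence ``$M$ is well founded $\iff$ $j$ is $\Sigma^1_1$-elementary''. For the direction we need, suppose $M$ is ill founded. Since $\cof(\lambda)=\omega$, a descending $\in^M$-branch through the direct limit yields a countable subsystem $\langle(s_\ell,X_\ell):\ell\in\omega\rangle$ of $E_j$ (so $s_\ell\in j(X_\ell)$ for all $\ell$) that is \emph{unsatisfiable} --- no function $d$ has $d[s_\ell]\in X_\ell$ for every $\ell$ --- while every finite subsystem \emph{is} satisfiable, the latter already from the $\Sigma^1_0$-elementarity ($\Ithree$) of $j$. The bounded case $\sup_\ell\max s_\ell<\lambda$ is absorbed by the well-foundedness of the honest extender ultrapowers, so assume $\sup_\ell\max s_\ell=\lambda$. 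Let $T$ be the tree of finite partial satisfying functions for this subsystem, so $T\subseteq V_\lambda$, $T\in V_{\lambda+1}$, and $T$ is well founded. Now the key observation: the \emph{pushed-forward} subsystem $\langle(j[s_\ell],j(X_\ell)):\ell\in\omega\rangle$ \emph{is} satisfiable --- the function $d^*$ defined by $d^*(j(\beta))=\beta$ for $\beta\in\bigcup_\ell s_\ell$ witnesses it, since $d^*[j[s_\ell]]=s_\ell\in j(X_\ell)$ --- hence its tree of partial satisfying functions is ill founded. But that tree is precisely $j^+(T)$: indeed $T$ is defined from the parameter sequence $\langle(s_\ell,X_\ell):\ell\rangle\in V_{\lambda+1}$ by a $\Delta^1_0$ clause, $j^+$ of that sequence is $\langle(j[s_\ell],j(X_\ell)):\ell\rangle$, and $j^+$, being $\Sigma^1_0$-elementary, commutes with the defining clause. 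Finally, ``$T$ is well founded'' is a \emph{$\Sigma^1_1$} statement over $V_\lambda$ with parameter $T$: it asserts that there is an ordinal-valued strictly order-reversing function $\rho$ on $T$, and such a $\rho$ can be taken with range below a fixed ordinal $<\lambda$, so it is a legitimate second-order object. Hence $\Sigma^1_1$-elementarity of $j$ forces $j^+(T)$ to be well founded --- contradicting the previous line. So $M$ is well founded.

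For the converse, let $i\colon V\prec M$ with $V_\lambda\subseteq M$ and $\lambda=\sup_n\kappa_n$ the critical sequence of $i$. Then $i(\lambda)=\sup_n i(\kappa_n)=\sup_n\kappa_{n+1}=\lambda$, so $j:=i\upharpoonright V_\lambda$ maps $V_\lambda$ into $V^M_{i(\lambda)}=V^M_\lambda=V_\lambda$; applying $i$ to the relativized assertion ``$V_\lambda\vDash\varphi$'' (whose truth $M$ computes correctly, as $V_\lambda\in M$) shows $j\in\mathscr{E}(\lambda)$ with the same critical sequence. To see $j$ is $\Sigma^1_1$-elementary, first note $j^+(A)=i(A)$ for $A\subseteq V_\lambda$: from $i(A)\cap V_{\kappa_{n+1}}=i(A)\cap V^M_{i(\kappa_n)}=i(A\cap V_{\kappa_n})=j(A\cap V_{\kappa_n})$, take the union over $n$. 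Now if $V_\lambda\vDash\exists X\,\varphi(X,A)$ with $\varphi$ $\Sigma^1_0$, fix a witness $X\in V_{\lambda+1}$; applying $i$ to ``$V_\lambda\vDash\varphi(X,A)$'' gives ``$V^M_{i(\lambda)}\vDash\varphi(i(X),i(A))$'', i.e.\ $V_\lambda\vDash\varphi(i(X),j^+(A))$ with $i(X)\in V_{\lambda+1}$, so $V_\lambda\vDash\exists X\,\varphi(X,j^+(A))$, as required. (The backward direction of the equivalence above is exactly this argument applied to the $i$ obtained there.)

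The step I expect to be the main obstacle is twofold: getting the derived extender right --- with seeds cofinal in $\lambda$, so that both $V_\lambda\subseteq M$ and $i_{E_j}\upharpoonright V_\lambda=j$ hold --- and, above all, the single trick that drives the whole forward implication, namely that $j^{-1}$ on the relevant generators satisfies the pushed-forward bad subsystem while well-foundedness of the search tree is, via its rank function, a $\Sigma^1_1$ rather than merely $\Pi^1_1$ assertion, which is what lets $\Sigma^1_1$-elementarity be applied in the direction that yields the contradiction. The converse is comparatively routine.
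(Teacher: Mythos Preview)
The paper does not prove Theorem~\ref{th:I2}; it is simply attributed to Martin and used as a black box. So there is no ``paper's own proof'' to compare against. Your argument is in fact the standard one (essentially Martin's, and the one sketched in Kanamori): derive the long $(\kappa,\lambda)$-extender from $j$, reduce well-foundedness of the ultrapower to the non-existence of a countable unsatisfiable-but-finitely-satisfiable subsystem, and exploit that the pushed-forward system is satisfied by $d^*(j(\beta))=\beta$. The converse direction is handled correctly; note that $V_\lambda\in M$ follows from $V_\lambda=(V_{i(\lambda)})^M$ once you have $i(\lambda)=\lambda$, which you use implicitly when you say $M$ computes ``$V_\lambda\vDash\varphi$'' correctly.

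One genuine wrinkle in your forward direction: you phrase the key step as ``\,`$T$ is well founded' is $\Sigma^1_1$ via a rank function with range below a fixed ordinal ${<}\lambda$''. That bound is not automatic. The tree $T$ of partial satisfiers has height $\omega$ but its levels can have size cofinal in $\lambda$ (since the seeds are unbounded, the codomain of a level-$n$ satisfier is some $\kappa_{m_n}$ with $m_n\to\infty$), so a priori $|T|$ and hence $\operatorname{rank}(T)$ need not be ${<}\lambda$; you would owe an extra argument to get the rank function into $V_{\lambda+1}$. The clean fix is to run the elementarity the other way: ``$T$ is ill founded'' is $\Sigma^1_1$ (an infinite branch is an $\omega$-sequence in $V_\lambda$, hence a subset of $V_\lambda$), you have exhibited a branch through $j^+(T)$, and since $\Sigma^1_1$-elementarity is a biconditional this pulls back to a branch through $T$, contradicting unsatisfiability. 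Equivalently, $\Sigma^1_1$-elementarity already gives $\Pi^1_1$-preservation, so ``$T$ well founded $\Rightarrow j^+(T)$ well founded'' needs no rank function at all. With that adjustment the proof is complete.
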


In view of Theorem~\ref{th:I2}, we write $\Itwo(\lambda)$ for one of the two following equivalent statements:
	\begin{itemize}
	\item there is a $j \colon V\prec M$ such that $V_\lambda\subseteq M$, where $\lambda$ is the supremum of the critical sequence of $j$;
	\item $\mathscr{E}_1 (  \lambda ) \neq \emptyset$.
	\end{itemize}

Going back to \( \Sigma ^1_n \)-embeddings, there is  a hierarchy of hypotheses of length $\omega+1$, that starts with $\Ithree$, $\Itwo$, and then $\Sigma^1_2$-, $\Sigma^1_3$-, $\dots$ elementarity up to $\Ione$. 
Do they really form a proper hierarchy? 

\begin{teo}[Laver, Martin,~\cite{Laver},~\cite{Martin}]\label{LaverMartin}
For every \( n \in \omega  \)
\begin{enumerate}
\item 
\( \mathscr{E}_{2n + 1} (  \lambda ) = \mathscr{E}_{2n + 2} (  \lambda ) \);
\item 
if  $ j \in \mathscr{E}_{2n + 1} (  \lambda ) $ and \( \kappa = \crt ( j ) \), then \( C_n = \{ \lambda ' < \kappa : \mathscr{E}_{2n} (  \lambda ' ) \neq \emptyset \} \) is \( \omega \)-club in \( \kappa \).
	\end{enumerate}
\end{teo}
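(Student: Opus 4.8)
My plan is to prove the two clauses together by induction on $n$, the common thread being a complexity count. First I record that for $\mu<\lambda$ the assertion ``$\mathscr{E}_{2n}(\mu)\neq\emptyset$'' is \emph{first-order} over $V_\lambda$: every subset of $V_\mu$ lies in $V_{\mu+1}\subseteq V_\lambda$, and the satisfaction relation of the structure $V_\mu$ together with its power set (the interpretation of the second sort) is uniformly definable over $V_\lambda$, so one may quantify over candidate embeddings $h\in V_{\mu+1}$, over parameters, and over formula codes entirely inside $V_\lambda$; write $\rho_{2n}(\mu)$ for the resulting formula. On the other hand the \emph{intrinsic} complexity of ``$h$ is $\Sigma^1_{2n}$-elementary'' — a universal second-order quantifier over the parameter, in front of a biconditional between two $\Sigma^1_{2n}$ matrices — is $\Pi^1_{2n+1}$, and this is exactly why $\Sigma^1_{2n+1}$-elementarity is the correct threshold in both clauses.

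\emph{Clause (1).} The inclusion $\mathscr{E}_{2n+2}(\lambda)\subseteq\mathscr{E}_{2n+1}(\lambda)$ is immediate. For the reverse, fix $j\in\mathscr{E}_{2n+1}(\lambda)$ and a $\Sigma^1_{2n+2}$ formula, written $\exists X\,\psi(X,\vec A)$ with $\psi$ of class $\Pi^1_{2n+1}$. The upward implication ``$V_\lambda\vDash\exists X\,\psi(X,\vec A)\Rightarrow V_\lambda\vDash\exists X\,\psi(X,j^+\vec A)$'' is free: if $X_0$ witnesses the hypothesis, then $j^+(X_0)$ witnesses the conclusion, since $j$ preserves the $\Pi^1_{2n+1}$ formula $\psi$ even with the extra parameter $X_0$ (it is $\Sigma^1_{2n+1}$-, hence $\Pi^1_{2n+1}$-, elementary). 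The substance is the downward implication — equivalently, upward preservation of $\Pi^1_{2n+2}$ — and here I would invoke clause (2) at the same stage $n$: given $V_\lambda\vDash\exists X\,\psi(X,j^+\vec A)$, reflect this into some $V_{\lambda'}$ with $\lambda'<\crt(j)$ carrying a $\Sigma^1_{2n}$-elementary self-embedding, use that embedding to produce a witness over the now-unmoved parameter $\vec A$ inside $V_{\lambda'}$, and transfer it back up along $j$. That reflecting a $\Pi^1_{2n+1}$ matrix faithfully requires a $\Sigma^1_{2n}$-elementary embedding of $V_{\lambda'}$ is precisely what the complexity count above makes admissible.

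\emph{Clause (2).} Fix $j\in\mathscr{E}_{2n+1}(\lambda)$, put $\kappa=\crt(j)$, let $\langle\kappa_m:m\in\omega\rangle$ be its critical sequence, so $\lambda=\sup_m\kappa_m$; note $j\in\mathscr{E}_{2n}(\lambda)$, that $j\in\mathscr{E}_{2n+2}(\lambda)$ by clause (1), and that $j$ induces a normal ultrafilter $U$ on $\kappa$ with $U\in V_\lambda$. The core is \emph{unboundedness of $C_n$ in $\kappa$}. The datum to exploit is that $j$ itself witnesses $\mathscr{E}_{2n}(\lambda)\neq\emptyset$; being $\Sigma^1_{2n+1}$-elementary, $j$ ``recognises'' the $\Pi^1_{2n+1}$ predicate ``$\cdot$ is a $\Sigma^1_{2n}$-elementary self-embedding'', and being first-order elementary on $V_\lambda$ with $j(\kappa)=\kappa_1>\kappa$, it lets one feed the statement ``$\{\mu<\kappa:\rho_{2n}(\mu)\}$ is unbounded in $\kappa$'' through $j$ and through $U$, extracting the needed $\Sigma^1_{2n}$-elementary self-embeddings cofinally below $\kappa$ (with the elementary restrictions $j\upharpoonright V_{\kappa_m}\colon V_{\kappa_m}\prec V_{\kappa_{m+1}}$ supplying the approximations to be closed up). For \emph{$\omega$-closure}: if $\lambda'_i\nearrow\lambda^*<\kappa$ with $h_i\in\mathscr{E}_{2n}(\lambda'_i)$, I would arrange the reflection so that the $h_i$ are \emph{coherent} — each $h_{i+1}$ agreeing with $h_i$ on a final segment of its critical sequence, as happens when all the $h_i$ are read off a single embedding — so that $h^*:=\bigcup_ih_i$ is an elementary self-embedding of $V_{\lambda^*}$ with $\lambda^*=\sup$ of its critical sequence; $\Sigma^1_{2n}$-elementarity of $h^*$ then follows from that of the $h_i$ by a direct-limit argument.

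The step I expect to be genuinely hard is the reflection below $\kappa$ in clause (2): converting the global hypothesis $j\in\mathscr{E}_{2n+1}(\lambda)$ into the local conclusions ``$\mathscr{E}_{2n}(\mu)\neq\emptyset$'' for $\omega$-club-many $\mu<\kappa$, while keeping the $\Sigma^1$-bookkeeping exact — in particular verifying that the reflected embeddings are genuinely $\Sigma^1_{2n}$-elementary and not merely first-order elementary. The passage $\Sigma^1_{2n+1}\to\Sigma^1_{2n+2}$ in clause (1) rests on the same computation for $\rho_{2n}$, and the $\omega$-closure is a secondary difficulty that should dissolve once the reflecting construction is organised coherently.
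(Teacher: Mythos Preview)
The paper does not prove this theorem; it is stated with attribution to Laver and Martin and used as a black box (the only follow-up is Corollary~\ref{smallstep}, which assumes it). So there is no proof in the paper to compare against, and what follows is an assessment of your proposal on its own merits.

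Your outline has two genuine gaps.

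\textbf{Clause (1) via clause (2) does not work as written.} Clause (2) only tells you that many $\lambda'<\kappa$ carry some $\Sigma^1_{2n}$-elementary self-map of $V_{\lambda'}$; it gives you no map between $V_{\lambda'}$ and $V_\lambda$, hence no way to ``reflect'' a statement about the parameter $j^+(\vec A)\subseteq V_\lambda$ down to $V_{\lambda'}$, and no way to speak of ``the now-unmoved parameter $\vec A$ inside $V_{\lambda'}$'' (remember $\vec A$ is a subset of $V_\lambda$, not an element of it). What Laver actually uses is the square-root construction recorded here as Proposition~\ref{prop:laver}(1): from $j\in\mathscr{E}_{2n+1}(\lambda)$ one produces $k$ with $k^+(k)=j$ and with any prescribed $B\subseteq V_\lambda$ in the range of $k^+$; it is the existence of such a $k$, not the bare nonemptiness of $\mathscr{E}_{2n}(\lambda')$ for small $\lambda'$, that drives the downward half of clause (1).

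\textbf{The $\omega$-closure argument is confused.} Closure of $C_n$ under $\omega$-limits means: for \emph{every} increasing $\omega$-sequence $\langle\lambda'_i\rangle$ in $C_n$ with supremum $\lambda^*<\kappa$, one must produce some $h^*\in\mathscr{E}_{2n}(\lambda^*)$. You do not get to ``arrange the reflection so that the $h_i$ are coherent'': the $\lambda'_i$ are arbitrary members of $C_n$, and the witnessing $h_i$ need have nothing to do with one another or with $j$. A union $\bigcup_i h_i$ will in general not even be a function. In Laver's argument the $\omega$-club arises from the inverse-limit construction behind Proposition~\ref{prop:laver}(2): one builds $J\colon V_{\lambda'}\prec V_\lambda$ with $\lambda'<\kappa$ as a limit of square roots, and pulls $j$ back along $J^+$; the $\omega$-closure is a byproduct of being able to start that construction at any prescribed height below $\kappa$ and close off after $\omega$ steps.

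Finally, your unboundedness sketch for clause (2) (``feed the statement through $j$ and through $U$'', with the $j\upharpoonright V_{\kappa_m}$ as approximations) is at best a heuristic: the restriction $j\upharpoonright V_{\kappa_m}$ maps $V_{\kappa_m}$ into $V_{\kappa_{m+1}}$, not into itself, so it is not a candidate member of any $\mathscr{E}_{2n}(\mu)$, and the normal ultrafilter $U$ on $\kappa$ sees only first-order facts about $V_\kappa$, which says nothing about second-order elementarity of embeddings living above $\kappa$. Again, the actual mechanism is the square-root/inverse-limit machinery, which you have not invoked.
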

Thus the existence of a $\Sigma^1_{2n+1}$-embedding from $V_\lambda$ to itself strongly implies the existence of a $\Sigma^1_{2n}$-embedding from $V_\lambda$ to itself.

The following result is Corollary 5.24 in~\cite{Dimonte}. 
Since the proof in that paper follows from a long-winded argument, for the reader's convenience we present a self-contained one.

\begin{cor}\label{smallstep}
\( \Ione ( \lambda ) \) strongly implies \( \forall n \ \mathscr{E}_n ( \lambda ) \neq \emptyset \).
\end{cor}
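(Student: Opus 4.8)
The plan is to extract the result directly from Theorem~\ref{LaverMartin}, once we commit to a single embedding to work with. Fix $j$ witnessing $\Ione(\lambda)$, that is, $j \in \bigcap_n \mathscr{E}_n(\lambda)$, and let $\kappa = \crt(j)$; the whole point of starting from $\Ione(\lambda)$ rather than from the weaker ``$\forall n\ \mathscr{E}_n(\lambda) \neq \emptyset$'' is that this \emph{same} $j$, and hence the same $\kappa$, will feed Theorem~\ref{LaverMartin} at every level $n$ simultaneously. The implication $\Ione(\lambda) \to \forall n\ \mathscr{E}_n(\lambda) \neq \emptyset$ is immediate from the definitions, so only the reflection clause of strong implication needs an argument: I must produce $\lambda' < \lambda$ with $\mathscr{E}_n(\lambda') \neq \emptyset$ for all $n$.

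First I would record the standard fact that $\kappa$ is measurable --- $\{X \subseteq \kappa : \kappa \in j(X)\}$ is a $\kappa$-complete nonprincipal ultrafilter on $\kappa$, exactly as for any $\Ithree$-embedding --- so that $\cof(\kappa) = \kappa > \omega$; this is the one point where more than bare set theory is used, and it is needed because a countable intersection of $\omega$-clubs can be empty when the ambient ordinal has cofinality $\omega$. Next, for each $n \in \omega$ apply Theorem~\ref{LaverMartin}(2) to $j \in \mathscr{E}_{2n+1}(\lambda)$: the set $C_n = \{\lambda' < \kappa : \mathscr{E}_{2n}(\lambda') \neq \emptyset\}$ is $\omega$-club in $\kappa$, and since $\mathscr{E}_{2n+2}(\lambda') \subseteq \mathscr{E}_{2n}(\lambda')$ the sequence $\langle C_n : n \in \omega \rangle$ is $\subseteq$-decreasing.

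The remaining step is to check that $\bigcap_n C_n$ is still $\omega$-club, hence in particular nonempty. Closure under suprema of $\omega$-sequences is inherited by any intersection; for unboundedness, given $\alpha < \kappa$ I would recursively choose $\beta_n \in C_n$ with $\alpha < \beta_0 < \beta_1 < \cdots$, set $\beta = \sup_n \beta_n$, note $\beta < \kappa$ because $\cof(\kappa) > \omega$, and observe that for each fixed $n$ the tail $\langle \beta_m : m \geq n \rangle$ lies in $C_n$ (here the decreasing property is used, via $C_m \subseteq C_n$) and has supremum $\beta$, so $\beta \in C_n$ by $\omega$-closure; thus $\beta \in \bigcap_n C_n$ and $\beta > \alpha$. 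Now take any $\lambda' \in \bigcap_n C_n$: then $\lambda' < \kappa < \lambda$, and for every $n$ we have $\mathscr{E}_n(\lambda') \supseteq \mathscr{E}_{2n}(\lambda') \neq \emptyset$, which is exactly the witness required.

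In short, there is no serious obstacle once Theorem~\ref{LaverMartin} is in hand; the two things to be careful about are that the same critical point $\kappa$ serves every level $n$ (which is why one starts from a single embedding realizing all of $\Ione$, not from level-by-level nonemptiness) and that $\cof(\kappa) > \omega$, so that the countable $\subseteq$-decreasing intersection of $\omega$-clubs remains $\omega$-club rather than collapsing to $\emptyset$.
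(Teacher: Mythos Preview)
Your proof is correct and follows essentially the same route as the paper: fix a single $\Ione$-embedding, invoke Theorem~\ref{LaverMartin}(2) at every level to obtain the $\omega$-clubs $C_n$ below the common critical point, and use that $\crt(j)$ has uncountable cofinality to conclude $\bigcap_n C_n \neq \emptyset$. The paper's version is more terse---it simply asserts $\bigcap_n C_n \neq \emptyset$ from the regularity of $\crt(j)$---whereas you spell out the diagonal argument and the auxiliary observation that the $C_n$ are decreasing; this extra detail is fine but not a genuinely different approach.
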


\begin{proof}
If $j \colon V_{\lambda + 1} \prec V_{\lambda + 1}$ witnesses $\Ione$, then \( j \restriction V_  \lambda \) is in $\mathscr{E}_{2n + 1} (  \lambda ) $ for any $n\in\omega$.
Let $C_n\subseteq\crt(j)$ be as in Theorem~\ref{LaverMartin};  as $\crt(j)$ is regular, $\bigcap_{n\in\omega}C_n\neq\emptyset$, therefore there is a $\lambda'<\crt(j)<\lambda$ such that \( \mathscr{E}_{2n} (  \lambda ' ) \neq \emptyset \) for any $n\in\omega$.
\end{proof}

Figure~\ref{fig:firsthierarchy} summarizes the situation until now, where all vertical arrows are strong implications.
\begin{figure}
 \centering
\begin{tikzpicture} \label{firsthierarchy}
 \node (I1){$\Ione ( \lambda ) \leftrightarrow \bigcap_n \mathscr{E} _n (  \lambda  ) \neq \emptyset $};
 \node (almostI1)[below of = I1]{$  \forall n\  \mathscr{E} _n (  \lambda  ) \neq \emptyset $}
	edge[<-](I1);
 \node (dots)[below of = almostI1]{$\vdots$}
 edge[<-](almostI1);
 \node (34)[below of = dots]{$ \mathscr{E} _4 (  \lambda  ) =  \mathscr{E} _3 (  \lambda  ) \neq \emptyset $}
 edge[<-](dots);
 \node (12)[below of = 34]{$\Itwo ( \lambda ) \leftrightarrow \mathscr{E} _2 (  \lambda  ) =  \mathscr{E} _1 (  \lambda  ) \neq \emptyset $}
 edge[<-](34);
 \node (base)[below of = 12]{$\Ithree ( \lambda ) \leftrightarrow  \mathscr{E} ( \lambda ) = \mathscr{E} _0 ( \lambda ) \neq \emptyset $}
 edge[<-](12);
\end{tikzpicture}
 \caption{}
 \label{fig:firsthierarchy}
\end{figure}

\section{Iterations of $\Ithree$}\label{sec:iterate}

If \( E \) is an extender in a transitive model \( M \) of \( \ZFC \), the iteration of length \( \nu \) is a commutative system of transitive models, extenders, and elementary embeddings \( \langle ( M_ \alpha , E_ \alpha , j_{ \beta , \alpha } ) : \beta \leq \alpha \in \nu \rangle \) defined as follows:
\begin{itemize}
\item
\( M_0 = M \), \( E_0 = E \), and \( j_{ 0 , 0} \) is the identity on \( M \),
\item
\( j_{ \alpha , \alpha + 1} \colon M_ \alpha \prec \Ult ( M_ \alpha , E_ \alpha )^{M_ \alpha } = M_{ \alpha + 1 } \) is the ultrapower embedding, and \( \Ult ( M_ \alpha , E_ \alpha )^{M_ \alpha } \) is the ultrapower of \( M_ \alpha \) via \( E_ \alpha \) computed in \( M_ \alpha \),
\item
\( E_{ \alpha + 1 } = j_{ \alpha , \alpha + 1} ( E_ \alpha ) \), and for \( \beta \leq \alpha \) we set \( j_{ \beta , \alpha + 1} = j_{ \alpha , \alpha + 1} \circ j_{ \beta , \alpha } \), 
\item
for \( \gamma \) limit \( M_ \gamma \) is the direct limit of the \( M_ \alpha \)s for \( \alpha < \gamma \).
\end{itemize}
If \( \nu \subseteq M \) one verifies by induction on \( \alpha \in \nu \)  that \( M_ \alpha \) is well-founded---see the proof of Lemma 19.5 in~\cite{Jech}.
In particular, if \( M \) is a proper class then \( \nu \) can be replaced by \( \Ord \).

In 1978 Martin showed that the determinacy of $\boldsymbol{\Pi}^1_2$ sets followed from the existence of an elementary embedding $j \colon V_\lambda\prec V_\lambda$ that is iterable~\cite{Martin}.
Before considering iterability, we need to define this concept rigorously. 
The crux of the matter is that $j$ is a proper class of $V_\lambda$, and it cannot be a definable class by a generalization of Kunen's Theorem by Suzuki~\cite{Suzuki}. 
So the hypothetical $j_{1,2}$ cannot be calculated directly as a $j(j)$, since $j$ is not an element of $V_\lambda$, but it cannot even be calculated indirectly, as \( j \) is not an ultrapower embedding via some extender.
The idea is then to exploit the fact that $\lambda$ has cofinality $\omega$, defining the first iterate as $j^+(j)$, and at limit stages finding a way to define $j_{0,\omega}^+(j)$. 
In this process, however, we do not have the assurance that we can always prolong the iterate, and in fact we will see in Theorem~\ref{th:step} that there are cases where it cannot even reach the $\omega$-limit. 

It is worthwhile noticing that climbing the hierarchy of rank-into-rank axioms up to $\Izero$ and beyond, this problem disappears again.
In fact:  \( \Itwo (  \lambda ) \) is witnessed by $\omega$-superstrong embeddings defined by an extender, which is iterable by the argument at the beginning of this section; \( \Ione (  \lambda ) \) is taken care by Proposition~\ref{prop:Ioneisiterable} below; \( \Izero (  \lambda ) \) can be witnessed by \emph{proper embeddings} (see \cite{Woodin,Dimonte2,Dimonte3}) which are in fact iterable, as they are defined from a normal ultrafilter. 
It seems therefore that iterability is a problem peculiar to $\Ithree$.

\begin{prop}\label{prop:Ioneisiterable}
 Every $j \colon V_{\lambda+1}\prec V_{\lambda+1}$ is iterable.
\end{prop}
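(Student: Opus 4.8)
The plan is to reduce the iterability of $j$ to the (automatic) well-foundedness of an iterated ultrapower by an extender. Write $\bar\jmath=j\upharpoonright V_\lambda$, so that $\bar\jmath\in\mathscr{E}(\lambda)$ and $\bar\jmath^{+}=j$ (recall from Section~\ref{pre} that a $\Ione$-embedding is the unique extension of its restriction to $V_\lambda$). Iterating $\bar\jmath$ means: $\bar\jmath_0=\bar\jmath$ on $M_0=V_\lambda$; at a successor $\bar\jmath_{\alpha+1}=\bar\jmath_\alpha^{+}(\bar\jmath_\alpha)$, which is legitimate because $\bar\jmath_\alpha$ is (coded by) an element of the ``$V_{\lambda_\alpha+1}$'' of $M_\alpha$, and then $M_{\alpha+1}=M_\alpha$, $\pi_{\alpha,\alpha+1}=\bar\jmath_\alpha$ (since $\bar\jmath_\alpha\colon M_\alpha\prec M_\alpha$); at a limit $\gamma$ one takes the direct limit. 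The two ways this can break are: a direct limit is ill-founded, or $\bar\jmath_\gamma$ cannot be defined at a limit. I would deal with both at once by running, in parallel, a genuine extender iteration.

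First I would use that $\bar\jmath\in\mathscr{E}_1(\lambda)$ (this follows from $\Ione$, e.g.\ via Corollary~\ref{smallstep}), so Theorem~\ref{th:I2} provides an $\omega$-superstrong $i\colon V\prec M$ with $V_\lambda\subseteq M$, $i\upharpoonright V_\lambda=\bar\jmath$ and $i(\lambda)=\lambda$. The point is that $i$, and hence the $(\kappa,\lambda)$-extender $E$ it is derived from, \emph{codes all of $j$}: for every $X\in V_{\lambda+1}$ one has $i(X)\cap V_\lambda=\bigcup_{\alpha<\lambda}i(X\cap V_\alpha)=\bigcup_{\alpha<\lambda}\bar\jmath(X\cap V_\alpha)=\bar\jmath^{+}(X)=j(X)$. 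Iterate $E$ along the ordinals, obtaining $\langle\mathcal M_\alpha,E_\alpha,i_{\beta\alpha}\rangle$; by the argument opening this section every $\mathcal M_\alpha$ is well-founded. The heart of the proof is then an induction on $\alpha$ establishing simultaneously that the iteration $\langle M_\alpha,\bar\jmath_\alpha,\pi_{\beta\alpha}\rangle$ of $\bar\jmath$ exists, that each $M_\alpha$ is well-founded, and that there is an $\in$-preserving $\sigma_\alpha\colon M_\alpha\to\mathcal M_\alpha$ commuting with the iteration maps, $\sigma_0$ being the inclusion $V_\lambda\subseteq V=\mathcal M_0$. The coding property, together with $i_{0,\alpha}(\bar\jmath)=\bar\jmath_\alpha$ and $i_{\alpha,\alpha+1}\upharpoonright V_\lambda=\bar\jmath_\alpha$, keeps the two iterations synchronised: at finite stages $\sigma_\alpha$ is simply the inclusion $V_\lambda\subseteq\mathcal M_\alpha$; at a limit $\gamma$, $\sigma_\gamma$ is the map induced on the direct limits, so $M_\gamma$ is well-founded, and then $\bar\jmath_\gamma=\pi_{0,\gamma}^{+}(\bar\jmath)$ is a well-defined elementary $\bar\jmath_\gamma\colon M_\gamma\prec M_\gamma$ — a proper class of $M_\gamma$, not a member, exactly as $\bar\jmath$ is of $V_\lambda$ — because $M_\gamma$ is a well-founded ``$V_{\lambda_\gamma}$-like'' model with $\cof^{M_\gamma}(\lambda_\gamma)=\omega$, so the application operation (as in Lemma~\ref{lem:application}) goes through inside it.

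The step I expect to be the real obstacle is the bookkeeping that propagates $\sigma_\alpha$ through \emph{all} ordinals. Past the first limit the $\bar\jmath$-iteration stalls ($M_{\alpha+1}=M_\alpha$ whenever $\bar\jmath_\alpha\colon M_\alpha\prec M_\alpha$) while the extender iteration keeps pushing its critical point upward — indeed $i_{0,\omega}(\kappa)=\sup_n\kappa_n=\lambda$ — so the naive stage-by-stage identification fails and one has to match $M_{\omega\cdot\eta}$ with $\mathcal M_{\omega\cdot\eta}$ via the commuting maps built at earlier stages, re-running inside $\mathcal M_\gamma$ the observation that $i_{0,\gamma}(\bar\jmath)$ still codes $\pi_{0,\gamma}^{+}(\bar\jmath)=\bar\jmath_\gamma$. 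The secondary technical input, which I would isolate as a lemma in the spirit of Lemma~\ref{lem:operations} and Lemma~\ref{lem:application}, is exactly the coding fact above together with its persistence under $i_{0,\gamma}$, since this is what licenses reading the iteration of $\bar\jmath$ as the $V_\lambda$-shadow of the iteration of $E$. Granting these, well-foundedness of every iterate of $\bar\jmath$, and so iterability of $j$, is inherited from that of the $\mathcal M_\alpha$'s.
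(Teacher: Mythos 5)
Your reduction to the extender iteration is the same device the paper uses, and the step you single out as the real obstacle is actually the easy part: the correspondence between the two iterations is exact stage by stage (one shows $M_\alpha=\mathcal{M}_\alpha\cap V_{i_{0,\alpha}(\lambda)}$ and $\bar\jmath_\alpha=i_{\alpha,\alpha+1}\upharpoonright M_\alpha$, as in Proposition~\ref{prop:extender} via Lemma~\ref{lem:onestep}), so there is no ``stalling'' to repair and well-foundedness of every $M_\alpha$ is immediate. What you obtain this way is that $\bar\jmath=j\upharpoonright V_\lambda$ is iterable as an $\Ithree$-embedding --- which is exactly Proposition~\ref{prop:extender} applied to $\bar\jmath\in\mathscr{E}_1(\lambda)$ (and for that membership you only need that $j$ is elementary on $V_{\lambda+1}$, not Corollary~\ref{smallstep}, which produces a \emph{smaller} $\lambda'$).

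The genuine gap is that the proposition is about iterating $j$ itself, at the $V_{\lambda+1}$ level: the iterates are the extensions $j_\alpha=(\bar\jmath_\alpha)^+$ acting on $(V_{\lambda_\alpha+1})^{\mathcal{M}_\alpha}$, and to continue past a limit $\gamma$ one must know that $j_\gamma=(i_{0,\gamma}(k))^+$ is again a \emph{fully} elementary self-map of $(V_{\lambda_\gamma+1})^{\mathcal{M}_\gamma}$, i.e., that the limit iterate is still an $\Ione$-embedding and not merely $\Sigma^1_1$. Your proposal never addresses this; it only produces $\bar\jmath_\gamma$ as a first-order elementary self-map of $M_\gamma$ via Lemma~\ref{lem:application}, which is the conclusion one would draw for an arbitrary $\Itwo$-embedding. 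The paper closes exactly this gap by observing (after Laver) that ``$k$ is $\Sigma^1_n$'' is a $\Sigma^1_{n+2}$ statement about $(V_\lambda,k)$, hence preserved by the limit map, so the limit iterate is $\Sigma^1_n$ for every $n$ and the construction can be continued. Your scaffold in fact contains the missing ingredient --- $k$ is a set in $V$ and $i_{0,\gamma}\colon V\prec\mathcal{M}_\gamma$ is fully elementary, so for each fixed $n$ the first-order statement ``$k$ is a $\Sigma^1_n$-embedding of $V_\lambda$'' transfers to $i_{0,\gamma}(k)$ in $\mathcal{M}_\gamma$ --- but you neither state this step nor acknowledge that it is needed, and without it you have not shown that the iteration of $j$ (as opposed to that of $\bar\jmath$) is everywhere defined.
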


\begin{proof}
 Let $j \colon V_{\lambda+1}\prec V_{\lambda+1}$. 
 In particular $j\upharpoonright V_\lambda=k$ is a $\Sigma^1_1$-embedding, therefore it can be extended to an embedding $i \colon V\prec M$ with $V_\lambda\subseteq M$. 
 Now, $i$ is iterable, so $i_\alpha=i_{\alpha,\alpha+1}$, the $\alpha$-th iterate, and $i_{0,\alpha}$, the limit embedding, are defined for any $\alpha$ ordinal, and $M_\alpha$, the $\alpha$-th model of the iteration, is well-founded. 
 We want to define an iterate for $j$. 
 Since $j(\lambda)=\lambda$, we have that $j(k) \colon V_\lambda\prec V_\lambda$, so we can define $j_2=j(k)^+$, i.e., the extension of $j(k)$ to $V_{\lambda+1}$, $j_3=j_2(j_2\upharpoonright V_\lambda)^+$ and so on.
 Now, as the extension of an $\Ithree$-embedding is uniquely defined, $i_n\upharpoonright V_{\lambda+1}=j_n$, and the $\omega$-th model of the iteration of $j$ is $(V_{i_{0,\omega}(\lambda)+1})^{M_\omega}$ (see proof of Proposition~\ref{prop:extender}). 
 We can define therefore $j_\omega=j_{0,\omega}(k)^+$, with $j_{0,\omega}(k) \colon (V_{i_{0,\omega}(\lambda)})^{M_\omega}\prec (V_{i_{0,\omega}(\lambda)})^{M_\omega}$ and $i_\omega\upharpoonright (V_{i_{0,\omega}(\lambda)+1})^{M_\omega}=j_\omega$. 
Finally, ``$k$ is $\Sigma^1_n$'' is a $\Sigma^1_{n+2}$-formula, (see Lemma 2.1 in \cite{Laver}) and therefore by elementarity of the $j_{0,\omega}$, $j_{0,\omega}(k)$ is a $\Sigma^1_n$-embedding for any $n\in\omega$, and therefore an $\Ione$-embedding. So we can continue indefinitely the construction, and $j$ is iterable.
\end{proof}

The following lemmas provide the key construction for iterates of rank-into-rank embeddings:

\begin{lem}\label{lem:cofinalandamenable=>Sigma1}
Let $M,N\vDash \ZFC$ be transitive sets, $\pi \colon M\prec N$, and  $X\subseteq M$. 
If
\begin{itemize}
\item 
$\pi$ is cofinal, i.e., $\forall\beta\in N\cap \Ord$ $\exists\alpha\in M\cap \Ord$ $\pi(\alpha)>\beta$;
\item 
$X$ is amenable for $M$, i.e., $\forall\alpha\in M\cap \Ord$ $X\cap (V_\alpha)^M\in M$,
\end{itemize}
define $\pi^+ ( X ) = \bigcup_{\alpha \in M \cap \Ord} \pi ( X \cap ( V_\alpha ) ^M) $. 
Then \( \pi^+ ( X )  \) is amenable for \( N \) and $\pi \colon ( M , X ) \prec_1 ( N , \pi^+ (X) ) $.
\end{lem}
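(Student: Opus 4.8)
The plan is to prove the two assertions separately: that $\pi^+(X)$ is amenable for $N$, and that $\pi$ is $\Sigma_1$-elementary between $(M,X)$ and $(N,\pi^+(X))$ in the language $\{\in,\dot X\}$. Write $x_\alpha = X\cap(V_\alpha)^M$ for $\alpha\in M\cap\Ord$; by amenability of $X$ for $M$ each $x_\alpha$ lies in $M$, and the family $\langle x_\alpha\rangle$ coheres: for $\alpha\le\alpha'$ in $M\cap\Ord$ one has $x_\alpha\subseteq x_{\alpha'}$ and $x_\alpha=x_{\alpha'}\cap(V_\alpha)^M$, so applying $\pi$ gives $\pi(x_\alpha)\subseteq\pi(x_{\alpha'})$ and $\pi(x_\alpha)=\pi(x_{\alpha'})\cap(V_{\pi(\alpha)})^N$. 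The first thing I would extract from this is the identity $\pi^+(X)\cap(V_{\pi(\alpha)})^N=\pi(x_\alpha)$ for every $\alpha\in M\cap\Ord$: the inclusion $\supseteq$ is immediate from the definition of $\pi^+(X)$, and for $\subseteq$, if $z\in\pi(x_{\alpha'})\cap(V_{\pi(\alpha)})^N$ then, after replacing $\alpha'$ by $\max(\alpha,\alpha')$ and using the coherence identity, $z\in\pi(x_\alpha)$. Amenability of $\pi^+(X)$ for $N$ then follows at once: given $\gamma\in N\cap\Ord$, use cofinality of $\pi$ to find $\alpha\in M\cap\Ord$ with $\pi(\alpha)>\gamma$, so that $\pi^+(X)\cap(V_\gamma)^N=\pi(x_\alpha)\cap(V_\gamma)^N\in N$.

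Next I would establish $\Delta_0$-elementarity of $\pi\colon(M,X)\to(N,\pi^+(X))$, using only amenability and not cofinality. Fix a $\Delta_0$ formula $\varphi(\vec v)$ of $\{\in,\dot X\}$ and parameters $\vec a\in M$, and choose $\alpha\in M\cap\Ord$ large enough that $\bigcup_i\mathrm{tc}(\{a_i\})\subseteq(V_\alpha)^M$. Let $\varphi^*$ be the $\{\in\}$-formula obtained by replacing the predicate symbol $\dot X$ by a fresh variable. Since $\varphi$ is $\Delta_0$, its truth value at given parameters depends only on the intersection of the interpretation of $\dot X$ with the transitive closure of those parameters; hence $(M,X)\vDash\varphi(\vec a)$ iff $M\vDash\varphi^*(\vec a,x_\alpha)$, and, because $\mathrm{tc}(\{\pi(a_i)\})=\pi(\mathrm{tc}(\{a_i\}))\subseteq(V_{\pi(\alpha)})^N$ and $\pi^+(X)\cap(V_{\pi(\alpha)})^N=\pi(x_\alpha)$, also $(N,\pi^+(X))\vDash\varphi(\pi(\vec a))$ iff $N\vDash\varphi^*(\pi(\vec a),\pi(x_\alpha))$. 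The middle link $M\vDash\varphi^*(\vec a,x_\alpha)$ iff $N\vDash\varphi^*(\pi(\vec a),\pi(x_\alpha))$ is plain elementarity of $\pi$. Composing the three equivalences yields $\Delta_0$-elementarity.

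Finally I would bootstrap from $\Delta_0$ to $\Sigma_1$ using cofinality. For $\exists v\,\psi(v,\vec a)$ with $\psi$ a $\Delta_0$ formula of $\{\in,\dot X\}$, the forward implication is immediate from the previous paragraph applied to a witness in $M$. For the converse, if $(N,\pi^+(X))\vDash\psi(d,\pi(\vec a))$ with $d\in N$, pick $\gamma\in N\cap\Ord$ with $d\in(V_\gamma)^N$ and, by cofinality, $\beta\in M\cap\Ord$ with $\pi(\beta)>\gamma$; then $(N,\pi^+(X))\vDash\exists v\in\pi((V_\beta)^M)\,\psi(v,\pi(\vec a))$, which is a $\Delta_0$ statement with all parameters in $\ran\pi$, so by the $\Delta_0$-elementarity just proved $(M,X)\vDash\exists v\in(V_\beta)^M\,\psi(v,\vec a)$ and a fortiori $(M,X)\vDash\exists v\,\psi(v,\vec a)$ (the case of several leading existentials is identical, bounding all witnesses by a common $(V_\gamma)^N$).

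The routine parts are the two "$\Delta_0$ truth depends only on the predicate restricted to the transitive closure of the parameters" observations and the elementary bookkeeping with ranks. The part that needs genuine care — the main obstacle — is the behaviour of the predicate: verifying that $\pi^+(X)$ is coherently defined and amenable for $N$, encapsulated in the identity $\pi^+(X)\cap(V_{\pi(\alpha)})^N=\pi(x_\alpha)$, and then making sure that this identity is exactly what licenses replacing the predicate $\dot X$ by the honest element $x_\alpha\in M$ (respectively $\pi(x_\alpha)\in N$) inside $\Delta_0$ formulas. Cofinality of $\pi$ enters only at the last, $\Sigma_1$, step.
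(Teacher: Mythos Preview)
Your proof is correct and follows the same two-step skeleton as the paper's (terse) argument: establish $\Sigma_0$-elementarity of $\pi$ for the expanded language, then use cofinality to upgrade to $\Sigma_1$. You supply considerably more detail than the paper does---in particular you explicitly verify amenability of $\pi^+(X)$ via the key identity $\pi^+(X)\cap(V_{\pi(\alpha)})^N=\pi(x_\alpha)$, and you handle the $\Delta_0$ step by the clean substitution trick (replacing the predicate $\dot X$ by the honest parameter $x_\alpha$) rather than by the formula induction the paper alludes to---but the overall strategy is the same.
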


\begin{proof}
Well known. 
A simple induction proves that $\pi \colon ( M , X ) \prec_0 ( N , \pi^+ ( X )) $, and every cofinal $\Sigma_0$-embedding is a $\Sigma_1$-embedding.
\end{proof}

The next result shows that with an assumption on the cofinality of $M\cap\Ord$ we can have full elementarity:

\begin{lem} \label{lem:application}
Let $M,N\vDash \ZFC$ be transitive sets.
Suppose $\pi \colon M\prec N$ is cofinal, that  $ \gamma = \cof ( M\cap \Ord ) \in M $, and that \( M^{< \gamma } \subseteq M \). 
Then $\pi \colon (M,X)\prec (N,\pi^+(X))$ for any $X\subseteq M$ amenable in $M$.
\end{lem}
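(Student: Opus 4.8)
The plan is to prove, by induction on $n\in\omega$, the (uniform) statement that $\pi\colon(M,X)\prec_n(N,\pi^+(X))$ holds for \emph{every} $X\subseteq M$ amenable in $M$. The cases $n=0,1$ are exactly Lemma~\ref{lem:cofinalandamenable=>Sigma1}, so all the work is in the inductive step $n\to n+1$.

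Throughout I would use the following elementary observation. Since $\pi$ is cofinal and elementary for the \emph{predicate-free} language, for each $\delta\in M\cap\Ord$ one has $(V_{\pi(\delta)})^N=\pi\bigl((V_\delta)^M\bigr)$, and unwinding the definition of $\pi^+$ gives $\pi^+(X)\cap(V_{\pi(\delta)})^N=\pi\bigl(X\cap(V_\delta)^M\bigr)$; hence, applying $\pi$ to the predicate-free formula coding satisfaction in the \emph{set}-sized structure $\bigl\langle(V_\delta)^M,\in,X\cap(V_\delta)^M\bigr\rangle$, for every two-sorted formula $\varphi$ and every $\vec a\in(V_\delta)^M$,
\begin{equation*}
\bigl\langle (V_\delta)^M,X\cap(V_\delta)^M\bigr\rangle\vDash\varphi(\vec a)
\iff
\bigl\langle (V_{\pi(\delta)})^N,\pi^+(X)\cap(V_{\pi(\delta)})^N\bigr\rangle\vDash\varphi(\pi\vec a).
\tag{$\ast$}
\end{equation*}
Moreover $(M,X)=\bigcup_{\delta}\bigl\langle(V_\delta)^M,X\cap(V_\delta)^M\bigr\rangle$ and, again by cofinality of $\pi$, $(N,\pi^+(X))=\bigcup_\delta\bigl\langle(V_{\pi(\delta)})^N,\pi^+(X)\cap(V_{\pi(\delta)})^N\bigr\rangle$, both being increasing unions along a cofinal set of $\delta$'s of order type $\gamma=\cof(M\cap\Ord)$; note that $M^{<\gamma}\subseteq M$ forces $\gamma$ to be a regular cardinal of $M$.

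For the inductive step it is enough, by the usual reduction, to handle the downward direction: given a $\Pi_n$ formula $\psi$ of the two-sorted language, parameters $\vec a\in M$, and assuming $(N,\pi^+(X))\vDash\exists y\,\psi(y,\pi\vec a,\pi^+(X))$, produce $b\in M$ with $(M,X)\vDash\psi(b,\vec a,X)$. By the inductive hypothesis $\prec_n$ — which transfers $\Pi_n$ formulas in both directions — it suffices to locate an existential witness inside $\ran(\pi)$, i.e.\ to find $b\in M$ with $(N,\pi^+(X))\vDash\psi(\pi b,\pi\vec a,\pi^+(X))$. Using $(\ast)$ to pass between corresponding $\delta$-pieces, the cofinality of $\pi$ to push the quantifiers of $\psi$ into longer and longer pieces, and $\prec_n$ to retain control of the $\Pi_n$-matrix, one shows (by a somewhat delicate argument) that $\bigl\langle(V_\delta)^M,X\cap(V_\delta)^M\bigr\rangle\vDash\exists y\,\psi(y,\vec a,X)$ for cofinally many $\delta$. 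Then, fixing a continuous increasing cofinal sequence $\langle\delta_\xi:\xi<\gamma\rangle$ in $M\cap\Ord$ together with witnesses $b_\xi\in(V_{\delta_\xi})^M$, a threading argument — legitimate because $\gamma$ is regular in $M$ and $M^{<\gamma}\subseteq M$, so every proper initial segment of the assembly lies in $M$ — produces a single $b\in M$ defeating $\exists z\,\neg\psi$ against each $(V_{\delta_\xi})^M$, hence against all of $M$; feeding $b$ back through $(\ast)$ and $\prec_n$ closes the induction.

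I expect the threading step — ``witnesses in cofinally many pieces $\Rightarrow$ a global witness'' — to be the main obstacle. The point is that $(M,X)$ is an extremely weak structure: amenability of $X$ buys only $\Delta_0$-separation for the predicate, while Collection and Replacement for formulas mentioning $X$ genuinely fail — already in $(V_\lambda,j)$ the critical sequence of $j$ is $\Delta_0$-definable from $j$ yet unbounded in $\lambda$ — so the Lévy–Montague reflection machinery is unavailable, and a decreasing $\gamma$-chain of nonempty ``partial witness'' classes can perfectly well have empty intersection. Thus one cannot simply invoke a reflection principle: the argument must be run by hand, exploiting the precise form of the partial-witness sets attached to a fixed $\Delta_0$-with-$X$ matrix together with $M^{<\gamma}\subseteq M$ and the regularity of $\gamma$ in $M$, and keeping careful track at each turn of which $\Delta_0$-with-$X$ computations can actually be carried out inside $M$.
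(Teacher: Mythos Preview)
Your inductive framework matches the paper's, and $(\ast)$ is correct. But the inductive step has a genuine gap: the ``threading argument'' is never actually carried out, and you yourself flag it as ``the main obstacle'' without resolving it. There are in fact two unjustified steps. First, the claim that $\langle(V_\delta)^M,X\cap(V_\delta)^M\rangle\vDash\exists y\,\psi(y,\vec a)$ for cofinally many $\delta$ is asserted via ``a somewhat delicate argument'' that is never given; for $n\ge2$ the matrix $\psi\in\Pi_n$ is not downward absolute from $(N,\pi^+(X))$ to its pieces, so it is unclear how piece-wise witnesses are to be extracted at all. Second, even granting this, the witnesses $b_\xi$ obtained in different pieces need not cohere: $\Pi_n$ is not upward absolute from the pieces to $(M,X)$, so no single $b_\xi$ need work globally, and $M^{<\gamma}\subseteq M$ only puts proper initial segments of $\langle b_\xi:\xi<\gamma\rangle$ into $M$, which by itself selects no global witness. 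Your final paragraph correctly diagnoses why reflection is unavailable here, but offers no replacement mechanism.

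The paper sidesteps threading entirely by exploiting the one feature of the inductive hypothesis you note but do not use: it is uniform over \emph{all} amenable predicates, not just $X$. Fix a cofinal $\gamma$-sequence $F=\langle\kappa_\alpha:\alpha<\gamma\rangle$ in $M\cap\Ord$ (amenable since $M^{<\gamma}\subseteq M$), and let $B$ be the amenable set encoding, for each $\Sigma_n$ formula $\exists y\,\psi$ and each $\alpha<\gamma$, which parameters $x$ satisfy $(M,X)\vDash\exists y\in V_{\kappa_\alpha}\,\psi(x,y)$. The statement that $B$ is exactly this set is $\Pi_n$ over $(M,X,F,B)$ --- here $\gamma\in M$ is used so that the quantifier $\exists\alpha<\gamma$ is bounded --- and applying the inductive hypothesis to the \emph{expanded} structure shows that $\pi^+(B)$ has the analogous description over $(N,\pi^+(X))$. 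A $\Pi_{n+1}$ sentence $\forall x\,\varphi(x)$ with $\varphi\in\Sigma_n$ then rewrites as the $\Pi_1$ statement ``$\forall x\,\exists\alpha<\gamma$ with $(\varphi,x,\alpha)$ recorded in $\mathring{B}$'', which transfers by Lemma~\ref{lem:cofinalandamenable=>Sigma1}. The bounded-witness truth predicate $B$ packages the global $\Sigma_n$-information into a single amenable set that moves across in one step; no threading is needed.
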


\begin{proof}
We prove by induction on \( n \) that $\pi \colon ( M , X ) \prec_n ( N , \pi^+ ( X ) ) $ for any $ X \subseteq M$ amenable in $M$. 
The case \( n = 1 \) holds by Lemma~\ref{lem:cofinalandamenable=>Sigma1}, so we may assume that  the result holds for some \( n \geq 1 \) towards proving the result for \( n + 1 \).

Fix $F=\langle\kappa_\alpha : \alpha < \gamma \rangle$ cofinal in $M\cap \Ord$. 
As \( M^{< \gamma } \subseteq M \) then \( F \) is amenable for \( M \).
Let \( \Fml_n \) be the set of (codes for) \( \Sigma _n \)-formulas in the language of set theory augmented with a \( 1 \)-ary predicate \( \mathring{X} \).
Let
\begin{equation*}
 B=\{( \exists y \psi ( x , y ) ,\alpha ) \in \Fml_n \times \gamma : \psi \text{ is }\Pi_{n-1}\wedge ( M , X ) \vDash\exists y\in V_{\kappa_\alpha}\psi(x,y) \}. 
\end{equation*}
Note that \( B \) is amenable for \( M \).
If $\varphi(x) \in \Fml_n $ is  $\exists y\psi(x,y)$, then
\begin{multline*}
\forall x \bigl ( \varphi(x) \rightarrow \exists \alpha<\gamma \, ( \varphi ( x ) , \alpha ) \in \mathring{B} \bigr )
\\
{}\wedge \forall x \forall \kappa \forall\alpha<\gamma \bigl ( ( \varphi(x) , \alpha)\in \mathring{B} \wedge  ( \alpha , \kappa ) \in \mathring{F} \rightarrow\exists y\in V_{ \kappa } \, \psi(x,y) \bigr )
\end{multline*} 
is a \( \Pi_n \)-formula \( \mathsf{\Psi}_{ \varphi ( x ) } \) in the language of set theory augmented with predicates \( \mathring{X}, \mathring{F} , \mathring{B} \) that holds true in \( ( M , X , F , B ) \).
(The assumption that the cofinality of \( M \cap \Ord \) is singular is used to bound the quantifier \( \exists  \alpha < \gamma  \) so that \( \mathsf{\Psi}_{ \varphi ( x ) } \) is indeed a \( \Pi_1 \) formula when \( n = 1 \).)
The formulas \( \mathsf{\Psi}_{ \varphi ( x ) } \) with \( \varphi(x) \in \Fml_n \) describe that $B$ is exactly as defined, therefore we can say that the $\Pi_n$-theory of $(M,X,F,B)$ ``knows'' the definition of $B$. 
Since 
\begin{equation*}
 \pi \colon (M,X,F,B)\prec_n (N,\pi^+(X),\pi^+(F),\pi^+(B)), 
\end{equation*}
then $\pi^+(B)$ is as expected, i.e., 
\begin{multline*}
 \pi^+ ( B ) = \{ ( \exists y \psi(x,y) , \alpha ) \in \Fml_n \times \pi ( \gamma ) : \psi\text{ is }\Pi_{n-1}
 \\
 {} \wedge (N,\pi^+(X))\vDash\exists y\in V_{\pi( F ( \alpha ) )}\psi(x,y)\}, 
\end{multline*}
so $(N,\pi^+(X),\pi^+(B))\vDash\exists \alpha< \pi ( \gamma ) \ ( \varphi(x) , \alpha ) \in \mathring{B}  $ iff $(N,\pi^+(X))\vDash\varphi(x)$. 

We are now ready to show that $\pi$ preserves all \( \Pi_{n + 1} \) formulas and hence it is $\Sigma_{n+1}$-elementary.
If $\varphi$ is a $\Sigma_n$ formula then 
\[
\begin{split}
(M,X)\vDash\forall x \varphi ( x ) & \leftrightarrow ( M , X , B ) \vDash \forall x \exists \alpha<\gamma \, ( \varphi(x) , \alpha ) \in \mathring{B}
\\
 & \leftrightarrow ( N , \pi^+ ( X ) , \pi^+ ( B ) ) \vDash \forall x \exists \alpha < \pi ( \gamma) \, ( \varphi ( x ) ,\alpha ) \in \mathring{B}
 \\
 & \leftrightarrow ( N , \pi^+ ( X ) ) \vDash\forall  x \varphi ( x )
\end{split}
\]
where the second equivalence follows from \( \pi \colon ( M , X , B ) \prec_1 ( N , \pi^+ ( X ) , \pi^+ ( B ) ) \) by Lemma~\ref{lem:cofinalandamenable=>Sigma1} and therefore it preserves \( \Pi_1 \) formulas.
\end{proof}

When $M=N=V_\lambda$ then \( \cof ( M \cap \Ord ) = \omega \) so that the hypothesis \( M^{< \gamma  } \subseteq M \) in the statement of Lemma~\ref{lem:application} holds automatically.
Lemma~\ref{lem:application} for $M=N=V_\lambda$ appears in several places without proof (e.g.,~\cite{Kanamori,Laver}), but only in~\cite{Laver2} there is a proof of that. 
Unfortunately, as it is written in~\cite{Laver2} there is a small gap: the proof is based on defining $j^+$ first on Skolem functions, but it is not clear why $j^+(f)$ should be total for any $f$ Skolem function. 
This problem is solved as Claim 3.7 in~\cite{Dimonte}. 
The proof above is instead an argument by Woodin found on MathOverflow~\cite{MOF}. 

Lemma~\ref{lem:cofinalandamenable=>Sigma1} shows how to extend an elementary embedding to amenable subsets.
A simple calculation proves that such extensions behave as expected between each other:

\begin{lem} \label{lem:operations}
Let $M_1 , M_2 ,N_1,N_2$ be transitive sets and models of \( \ZFC \).
\begin{enumerate} 
\item
If $j \colon M_1 \prec N_1$ and  $\pi \colon N_1\prec N_2$ are cofinal, and $X\subseteq M_1 $ is amenable for $M_1$, then $\pi^+(j^+(X))=(\pi^+(j))^+(\pi^+(X))$.
\item
If $\pi_1 \colon M_1\prec N_1$, $\pi_2 \colon M_2\prec N_2$, $j_1 \colon M_1\prec M_2$, and $j_2 \colon N_1\prec N_2$ are cofinal and $\pi_2\circ j_1=j_2\circ \pi_1$, then $\pi^+_2\circ j^+_1=j^+_2\circ \pi^+_1$ on the sets amenable for $M_1$.
\[
\begin{tikzcd}
N_1 \ar[r, "j_2"] & N_2
\\
M_1 \ar[r, "j_1"] \ar[u, "\pi_1"] & M_2 \ar[u, "\pi_2"] 
\end{tikzcd}
\]
\end{enumerate}
\end{lem}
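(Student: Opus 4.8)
The plan is to reduce everything to a diagram chase combined with Lemma~\ref{lem:cofinalandamenable=>Sigma1}. The two claims are really instances of the same phenomenon: $(\cdot)^+$ is a functorial operation on amenable predicates, so composing embeddings and then extending agrees with extending and then composing. For part (1), I would first check the easy fact that if $j\colon M_1\prec N_1$ and $\pi\colon N_1\prec N_2$ are cofinal then so is $\pi\circ j\colon M_1\prec N_2$, and that if $X$ is amenable for $M_1$ then $j^+(X)$ is amenable for $N_1$ (both by Lemma~\ref{lem:cofinalandamenable=>Sigma1}); hence both sides of the claimed identity are well-defined. Then I would compute directly from the definitions: for an ordinal $\alpha\in M_1\cap\Ord$,
\[
(\pi\circ j)^+(X)=\bigcup_{\alpha\in M_1\cap\Ord}\pi(j(X\cap (V_\alpha)^{M_1})),
\]
and I claim this equals $\pi^+(j^+(X))=\bigcup_{\beta\in N_1\cap\Ord}\pi(j^+(X)\cap (V_\beta)^{N_1})$. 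The inclusion $\subseteq$ is immediate since $j(X\cap(V_\alpha)^{M_1})=j^+(X)\cap (V_{j(\alpha)})^{N_1}$ by amenability and elementarity of $j$ as a $\Sigma_0$-map on the structure $(M_1,X)$. For $\supseteq$, given $\beta\in N_1\cap\Ord$ use cofinality of $j$ to find $\alpha\in M_1\cap\Ord$ with $j(\alpha)>\beta$, so that $j^+(X)\cap(V_\beta)^{N_1}\subseteq j^+(X)\cap(V_{j(\alpha)})^{N_1}=j(X\cap(V_\alpha)^{M_1})$, and apply $\pi$. This shows $(\pi\circ j)^+(X)=\pi^+(j^+(X))$; it remains to identify $(\pi\circ j)^+$ with $(\pi^+(j))^+$, i.e.\ to see that $\pi^+(j)$, the extension of the (class) function $j$ viewed as an amenable subset of $N_1$, really is the ``correct'' map $M_1\to N_2$ — this is where one must be slightly careful about what $\pi^+(j)$ means as a function versus as a set of pairs, but it follows because $\pi$ preserves the relation ``$p$ codes an ordered pair in the graph of the function-predicate'' and is cofinal, so $\pi^+(j)=\bigcup_\alpha \pi(j\restriction (V_\alpha)^{N_1})$ is a function with the stated domain and graph.

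For part (2) I would argue in exactly the same spirit, now with two ways of going around the square. Given the hypothesis $\pi_2\circ j_1=j_2\circ\pi_1$ and that all four maps are cofinal, both composites $\pi_2\circ j_1$ and $j_2\circ\pi_1$ are cofinal maps $M_1\prec N_2$, and they are equal as functions, hence their $(\cdot)^+$-extensions are literally the same operation on amenable subsets of $M_1$. So it suffices to show $(\pi_2\circ j_1)^+(X)=\pi_2^+(j_1^+(X))$ and $(j_2\circ\pi_1)^+(X)=j_2^+(\pi_1^+(X))$; each of these is precisely the identity established in part (1) (applied to the pair $j_1,\pi_2$ and to the pair $\pi_1,j_2$ respectively). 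Chaining the three equalities gives $\pi_2^+(j_1^+(X))=j_2^+(\pi_1^+(X))$ as required.

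The only genuinely delicate point — and the one I would treat most carefully — is the bookkeeping around $\pi^+(j)$ in part (1): $j$ is a subset of $N_1$ (a set of ordered pairs), it is amenable for $N_1$ because $M_1\prec N_1$ is cofinal and each $j\restriction (V_\alpha)^{N_1}$ lies in $N_1$, and one must verify that the set $\pi^+(j)$, defined purely as $\bigcup_\alpha\pi(j\cap(V_\alpha)^{N_1})$, coincides with the function obtained by extending the embedding $j$ — in particular that its domain is all of $M_2$ rather than some proper initial segment, which again uses cofinality of the maps involved together with the fact that $j$ is total on $M_1$. Everything else is a routine unwinding of the definition of $(\cdot)^+$ and the $\Sigma_0$-elementarity supplied by Lemma~\ref{lem:cofinalandamenable=>Sigma1}, so I expect the write-up to be short.
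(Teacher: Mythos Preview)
The paper does not actually give a proof of this lemma; it merely says ``a simple calculation proves that such extensions behave as expected between each other'' and states the result. So there is nothing to compare against, and the question is simply whether your argument is correct.

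Your treatment of part~(2) is fine: once one has the composition identity $(\sigma\circ\tau)^+ = \sigma^+\circ\tau^+$ for cofinal maps, part~(2) follows immediately by applying it to each way around the square. Your computation that $\pi^+(j^+(X)) = (\pi\circ j)^+(X)$ is also correct.

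The gap is in the second half of part~(1). You write that it remains ``to identify $(\pi\circ j)^+$ with $(\pi^+(j))^+$'' and that $\pi^+(j)$ should be ``the correct map $M_1\to N_2$'' with ``domain all of $M_2$''. This misidentifies what $\pi^+(j)$ is. There is no $M_2$ in part~(1), and $\pi^+(j)$ is \emph{not} the composite $\pi\circ j\colon M_1\to N_2$. In the relevant case $M_1=N_1$ (which is how the lemma is used throughout the paper), $j$ is an amenable subset of $N_1$ coding a map $N_1\to N_1$, and $\pi^+(j)=\bigcup_\alpha\pi(j\cap(V_\alpha)^{N_1})$ is an amenable subset of $N_2$ coding a map $N_2\to N_2$ --- this is exactly what Lemma~\ref{lem:application} gives. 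The two operators $(\pi\circ j)^+$ and $(\pi^+(j))^+$ do not even act on the same inputs: the first takes amenable subsets of $M_1$, the second takes amenable subsets of $N_2$.

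What you actually need is the intertwining relation $\pi^+(j)\circ\pi = \pi\circ j$ on $N_1$: if $(x,j(x))\in j\cap(V_\alpha)^{N_1}$ then $(\pi(x),\pi(j(x)))\in\pi(j\cap(V_\alpha)^{N_1})\subseteq\pi^+(j)$, so $\pi^+(j)(\pi(x))=\pi(j(x))$. With this in hand, and using that $\pi^+(X)\cap(V_{\pi(\alpha)})^{N_2}=\pi(X\cap(V_\alpha)^{N_1})$ together with cofinality of $\pi$, one computes
\[
(\pi^+(j))^+(\pi^+(X))=\bigcup_\alpha \pi^+(j)\bigl(\pi(X\cap(V_\alpha)^{N_1})\bigr)=\bigcup_\alpha \pi\bigl(j(X\cap(V_\alpha)^{N_1})\bigr)=\pi^+(j^+(X)),
\]
which is the desired identity. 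This is still a short calculation, but it is not the one you sketched; once you correct the target of $\pi^+(j)$ the rest of your outline goes through.
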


The next result is folklore.

\begin{lem}
If  $ j , k \colon V_\lambda\prec V_\lambda$, then $ j^+ ( k ) \colon V_\lambda \prec V_\lambda$. 
\end{lem}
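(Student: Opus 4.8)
The plan is to reduce everything to Lemma~\ref{lem:application}. First I would check that $j^+(k)$ is even defined: viewing $k$ as the class of pairs $\{(x,k(x)):x\in V_\lambda\}$, for every $\alpha<\lambda$ we have $k\cap V_\alpha\subseteq V_\alpha$, hence $k\cap V_\alpha\in V_{\alpha+1}\subseteq V_\lambda$, so $k$ is amenable for $V_\lambda$; and $j\colon V_\lambda\prec V_\lambda$ is cofinal, since its critical sequence is cofinal in $\lambda$ and $j(\kappa_n)=\kappa_{n+1}$. Thus $j^+(k)=\bigcup_{\alpha<\lambda}j(k\cap V_\alpha)$ is well defined, is amenable for $V_\lambda$, and each $j(k\cap V_\alpha)$ is a set of pairs of elements of $V_\lambda$, so $j^+(k)\subseteq V_\lambda\times V_\lambda$.

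Next I would invoke Lemma~\ref{lem:application} with $M=N=V_\lambda$ (so $\gamma=\cof(\lambda)=\omega$ and $M^{<\omega}\subseteq M$ holds trivially) and $X=k$, obtaining full elementarity
\[
j\colon (V_\lambda,k)\prec (V_\lambda,j^+(k))
\]
in the language of set theory augmented by the unary predicate $\mathring{X}$, now interpreted on the left by $k$ and on the right by $j^+(k)$.

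Finally I would observe that ``$\mathring{X}$ is the graph of an elementary embedding of $V_\lambda$ into itself'' is expressed by a \emph{scheme} of first-order sentences in the language $\{\in,\mathring{X}\}$: a single sentence says that $\mathring{X}$ is a total function, and for each formula $\varphi(v_1,\dots,v_n)$ of the language of set theory the sentence
\begin{multline*}
\forall x_1\dots x_n\,\forall y_1\dots y_n\Bigl(\textstyle\bigwedge_{i\le n}(x_i,y_i)\in\mathring{X}\ \rightarrow\\
\bigl(\varphi(x_1,\dots,x_n)\leftrightarrow\varphi(y_1,\dots,y_n)\bigr)\Bigr)
\end{multline*}
says that $\mathring{X}$ respects $\varphi$ (here satisfaction of the fixed $\varphi$ is just $\varphi$ evaluated in the underlying model $V_\lambda$, so this is genuinely first-order). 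All of these sentences hold in $(V_\lambda,k)$ by the hypothesis $k\colon V_\lambda\prec V_\lambda$, hence by the elementarity of the previous paragraph they all hold in $(V_\lambda,j^+(k))$; therefore $j^+(k)$ is a total function $V_\lambda\to V_\lambda$ that preserves every formula, i.e.\ $j^+(k)\colon V_\lambda\prec V_\lambda$.

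There is essentially no real obstacle once Lemma~\ref{lem:application} is in hand; the only points requiring a word of care are that the ``elementarity scheme'' really is a set of ordinary first-order sentences (one per $\varphi$), and that $j^+(k)$ maps into $V_\lambda$ and not merely into $V_{\lambda+1}$, which is immediate from its definition as a union of the sets $j(k\cap V_\alpha)\in V_\lambda$.
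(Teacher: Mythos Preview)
Your proof is correct and follows essentially the same approach as the paper: apply Lemma~\ref{lem:application} with $M=N=V_\lambda$ and $X=k$ to obtain $j\colon (V_\lambda,k)\prec(V_\lambda,j^+(k))$, then transfer the first-order scheme expressing ``$\mathring{X}$ is an elementary embedding of $V_\lambda$ into itself''. Your write-up is somewhat more explicit than the paper's (you verify amenability and cofinality, and you note separately that $j^+(k)$ is a total function), but the argument is the same.
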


\begin{proof}
Let \( \Fml \) be the set of all (codes of) first order formulas in the language of set theory. 
Note that  ``\( k \) is an elementary embedding from \( V_  \lambda  \) to itself'' amounts to say that \( V_  \lambda  \vDash \mathsf{\Upsilon}_{ \varphi ( x ) }\) for every \( \varphi ( x ) \in \Fml \), where \( \mathsf{\Upsilon}_{ \varphi ( x ) } \) is 
\[
 \forall x ( \varphi ( x ) \rightarrow \exists  y ( ( x , y ) \in \mathring{k} \wedge \varphi ( y ) )
\]
with \( \mathring{k} \) a binary predicate predicate for \( k \).  
Since \( j \colon ( V_  \lambda , k ) \to ( V_  \lambda , j^+(k) ) \) by Lemma~\ref{lem:application}, it follows that  $j^+(k) \colon V_\lambda \prec V_\lambda$. 
\end{proof}

In particular $j^+(j)=j_1$ will be an embedding with critical point $\crt(j^+(j))=j(\crt(j))=\kappa_1$. 
Letting \( j_0 = j \) and $j_{n+1}=j^+(j_n)$ one proves by induction on \( n \)  that $ j_{n+1} = j_n^+(j_n)$. 
Note that $\crt(j_{n})=\kappa_n$. 
By induction on \( n \) it follows that $j_n ( \kappa_m ) = \kappa_{m+1}$ when $m\geq n$: 
\begin{equation*}
 j_n ( \kappa_m ) = j_{n-1} ( j_{n-1} ) ( j_{n-1} ( \kappa_{m-1} ) ) = j_{n-1} ( j_{n-1} ( \kappa_{m-1} ) ) = j_{n-1} ( \kappa_m ) = \kappa_{m+1} .
\end{equation*}
Let $M_\omega$ be the direct limit of the system $ \langle ( V_\lambda , j_{n,m} ) : n,m \in \omega , n<m \rangle $, where $j_{n,n+1}=j_n$ and $j_{n,m} = j_m \circ j_{m-1}\circ\dots\circ j_n$. 
If $M_\omega$ is well-founded, then it is defined $j_{n,\omega} \colon V_\lambda\prec M_\omega$ for any $n\in\omega$. Now, $j_{0,\omega}$ is cofinal: Let $\alpha\in M_\omega$. 
Then there exist $n\in\omega$ and $\beta\in\lambda$ such that $\alpha=j_{n,\omega}(\beta)$. 
Let $m$ be such that $\beta<\kappa_m$ and $m>n$. 
Then $j_{0,n}(\kappa_{m-n})=\kappa_m>\beta$, and 
\begin{equation*}
 j_{0,\omega}(\kappa_{m-n})=j_{n,\omega}(j_{0,n}(\kappa_{m-n}))>j_{n,\omega}(\beta)=\alpha . 
\end{equation*}
Therefore we can define $j_\omega=j_{0,\omega}^+(j) \colon M_\omega\prec M_\omega$, and then $j_{\omega+1}=(j_\omega)^+(j_\omega) \colon M_\omega\prec M_\omega$, and so on. 
At each limit point we ask whether the direct limit is well-founded, and if so we continue, otherwise we stop.

Note that, differently than in the case $j \colon V\prec M$, the model $M_\alpha$ is the same as $M_{\alpha+1}$, so they are either both well-founded or not. 
In other words, the construction can stop only at limit ordinals. 
We say that $j$ is $\alpha$-iterable, then, if the construction does not stop at the $\omega\cdot\alpha$-th step, i.e, if $M_{\omega\cdot\alpha}$ is well-founded, and ${<}\alpha$-iterable if $M_{\omega\cdot\beta}$ is well-founded for any $\beta<\alpha$. 
As usual, we identify $M_\beta$ with its transitive collapse, when well-founded. 
We write $\Ithree_\alpha$ to indicate the existence of an $\alpha$-iterable embedding from $V_\lambda$ to itself, and $\Ithree_{<\alpha}$ for the existence of a ${<}\alpha$-iterable embedding. 
We say that $j$ is iterable, and we indicate the relative hypothesis with $\Ithree_\infty$, if it is $\alpha$-iterable for any $\alpha$ ordinal.

If $j$ is 1-iterable, as $j_m(\kappa_n)=\kappa_n$ for any $m>n$, we have that $\crt(j_{n,\omega})=\kappa_n$, so if $x\in V_{\kappa_n}$, $j_{n+1,\omega}(x)=x\in M_\omega$. 
This means that $j_{0,\omega}(\kappa_0)=\lambda$ and $V_\lambda\subseteq M_\omega$, so $M_\omega$ is actually ``taller'' then $V_\lambda$.
As $(V_\lambda)^{M_\omega}=V_\lambda$, and this implies that $V_\lambda\in M_\omega$. 
In the same way, if $j$ is 2-iterable then $M_\omega\in M_{\omega\cdot 2}$, and, more generally,  if $j$ is $\beta$-iterable then $M_{\omega\cdot\alpha}\in M_{\omega\cdot\beta}$ for any $\alpha<\beta$.

Suppose \( E \) is an extender in a transitive model \( M \) and let \( M_ \alpha  \) denote the \( \alpha \)th model of the  iteration.
It is a standard result in inner model theory that if \( M_ \alpha  \) is well-founded for every \( \alpha < \omega _1 \), then every \( M_ \alpha  \) is well-founded.
This holds also in our situation.

\begin{prop}\label{prop:iterability}
For $j \colon V_\lambda\to V_\lambda$, the following are equivalent:
\begin{itemize}
\item 
$\Ithree_\infty(j)$, that is $M_\alpha$ is well-founded for any $\alpha$ ordinal, i.e., $j$ is iterable,
\item 
$\Ithree_{\omega_1}(j)$, that is $M_{\omega_1}$ is well-founded, i.e., $j$ is $\omega_1$-iterable,
\item 
$\forall\beta<\omega_1\ \Ithree_\beta(j)$, that is $M_\beta$ is well-founded for any $\beta<\omega_1$, i.e., $j$ is ${<}\omega_1$-iterable.
 \end{itemize}
\end{prop}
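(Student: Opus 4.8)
The plan is to run the cycle $\Ithree_\infty(j)\Rightarrow\Ithree_{\omega_1}(j)\Rightarrow\forall\beta<\omega_1\ \Ithree_\beta(j)\Rightarrow\Ithree_\infty(j)$. The first implication is trivial, and the second is almost so: if some $M_\beta$ with $\beta<\omega_1$ were ill-founded, then the construction would already have halted before stage $\omega_1$ and $M_{\omega_1}$ would not even be defined. So the content lies in the last implication. I would first observe the easy half of it, that $\forall\beta<\omega_1\ \Ithree_\beta(j)$ implies $\Ithree_{\omega_1}(j)$: if $M_{\omega_1}=\varinjlim_{\alpha<\omega_1}M_\alpha$ had a descending $\in$-sequence $\langle c_k:k<\omega\rangle$, then each $c_k=j_{\alpha_k,\omega_1}(a_k)$ for some $\alpha_k<\omega_1$, and since $\cof(\omega_1)>\omega$ the ordinal $\alpha^\ast=\sup_k\alpha_k$ is below $\omega_1$; writing $j_{\alpha_k,\omega_1}=j_{\alpha^\ast,\omega_1}\circ j_{\alpha_k,\alpha^\ast}$ and using that $j_{\alpha^\ast,\omega_1}$ reflects $\in$, the sequence $\langle j_{\alpha_k,\alpha^\ast}(a_k):k<\omega\rangle$ would be $\in$-descending in $M_{\alpha^\ast}$, contradicting well-foundedness of $M_{\alpha^\ast}$. (The same argument in fact shows that the construction can break only at stages of cofinality $\omega$, but this will not be needed.) It remains to show that $\Ithree_{\omega_1}(j)$ implies $\Ithree_\infty(j)$, which is the real work; the idea is to copy a short ill-founded iterate down from a countable elementary submodel.

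Suppose, towards a contradiction, that some $M_\theta$ is ill-founded, and let $\theta$ be least such; by hypothesis $\theta>\omega_1$, and $M_\alpha$ is well-founded for every $\alpha<\theta$. Fix a sufficiently large regular $\chi$ and a countable $X\prec H_\chi$ containing $j$, $\lambda$, $\theta$, the (set-sized) iteration sequence $\langle M_\alpha,j_{\beta\alpha}:\beta\le\alpha\le\theta\rangle$, and a fixed infinite $\in$-descending sequence of $M_\theta$. Let $\pi\colon X\to\bar X$ be the transitive collapse, put $\sigma=\pi^{-1}\colon\bar X\prec H_\chi$, $\bar{\jmath}=\pi(j)$, $\bar\lambda=\pi(\lambda)$, $\bar\theta=\pi(\theta)$, and $\langle\bar M_{\bar\alpha},\bar{\jmath}_{\bar\beta\bar\alpha}\rangle=\pi(\langle M_\alpha,j_{\beta\alpha}\rangle)$; since $X$ is countable, $\bar\theta<\omega_1<\theta$. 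Because $\bar X$ is transitive it is correct about ill-foundedness and about the absolute operations involved in the iteration (the map $(\cdot)^+$, ultrapower/successor steps, direct limits), so in $V$ the sequence $\langle\bar M_{\bar\alpha}\rangle$ has the correct shape — at successor steps the underlying model is unchanged and the self-embedding is replaced by $\bar{\jmath}_{\bar\alpha}^{+}(\bar{\jmath}_{\bar\alpha})$, at limits one takes direct limits — and $\bar M_{\bar\theta}$ carries a genuine infinite $\in$-descending sequence, namely the collapse of the one fixed in $X$ (which lies in $\bar X$ because $\omega\subseteq X$).

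Now I would copy the length-$\bar\theta$ iteration of $\bar{\jmath}$ onto the iteration of $j$ along $\sigma$, building by induction on $\bar\alpha<\bar\theta$ cofinal elementary maps $\sigma_{\bar\alpha}\colon\bar M_{\bar\alpha}\prec M_{\bar\alpha}$ with $\sigma_{\bar\alpha}\circ\bar{\jmath}_{\bar\beta,\bar\alpha}=j_{\bar\beta,\bar\alpha}\circ\sigma_{\bar\beta}$ and $\sigma_{\bar\alpha}^{+}(\bar{\jmath}_{\bar\alpha})=j_{\bar\alpha}$. One starts with $\sigma_0=\sigma\upharpoonright V_{\bar\lambda}^{\bar X}$, which is a cofinal elementary map into $V_\lambda$ whose range is cofinal in $\lambda$ (it contains the critical sequence of $j$) and for which $\sigma_0^{+}(\bar{\jmath})=j$ by the explicit formula for $(\cdot)^+$ in Lemma~\ref{lem:cofinalandamenable=>Sigma1}. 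At a successor $\bar\alpha+1$ the underlying model does not change, so I put $\sigma_{\bar\alpha+1}=\sigma_{\bar\alpha}$; the commutation $\sigma_{\bar\alpha}\circ\bar{\jmath}_{\bar\alpha}=j_{\bar\alpha}\circ\sigma_{\bar\alpha}$ is exactly what $\sigma_{\bar\alpha}^{+}(\bar{\jmath}_{\bar\alpha})=j_{\bar\alpha}$ delivers, and applying $\sigma_{\bar\alpha}^{+}$ to $\bar{\jmath}_{\bar\alpha+1}=\bar{\jmath}_{\bar\alpha}^{+}(\bar{\jmath}_{\bar\alpha})$ and invoking Lemma~\ref{lem:operations}(1) gives $\sigma_{\bar\alpha+1}^{+}(\bar{\jmath}_{\bar\alpha+1})=\bigl(\sigma_{\bar\alpha}^{+}(\bar{\jmath}_{\bar\alpha})\bigr)^{+}\bigl(\sigma_{\bar\alpha}^{+}(\bar{\jmath}_{\bar\alpha})\bigr)=j_{\bar\alpha}^{+}(j_{\bar\alpha})=j_{\bar\alpha+1}$. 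At a limit $\bar\alpha$ I take $\sigma_{\bar\alpha}$ to be the direct limit of the $\sigma_{\bar\beta}$'s, which is legitimate since they commute with the iteration maps, and elementary since it is a direct limit of elementary maps. Crucially the target $M_{\bar\alpha}$ here is the genuine $j$-iterate at a stage $\le\bar\theta<\theta$, hence well-founded, so $\sigma_{\bar\alpha}$ forces $\bar M_{\bar\alpha}$ to be well-founded as well, which keeps the induction running. Having carried this out for all $\bar\alpha<\bar\theta$, the $\sigma_{\bar\alpha}$'s induce an elementary $\sigma_{\bar\theta}\colon\bar M_{\bar\theta}=\varinjlim_{\bar\alpha<\bar\theta}\bar M_{\bar\alpha}\to\varinjlim_{\bar\alpha<\bar\theta}M_{\bar\alpha}=M_{\bar\theta}$; but $\bar\theta<\theta$, so $M_{\bar\theta}$ is well-founded, whence $\bar M_{\bar\theta}$ is well-founded — contradicting the previous paragraph. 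Therefore no $M_\theta$ is ill-founded, so $\Ithree_\infty(j)$ holds.

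The heart of the argument, and where I expect the real work, is this last implication: one must verify that the collapsed embedding $\bar{\jmath}$ genuinely satisfies the hypotheses under which its iteration is defined (cofinality $\omega$, critical sequence cofinal, amenability), that the iteration construction is absolute enough between $\bar X$ and $V$, and — the decisive point — that the copy maps survive the successor stages, where the underlying model is frozen but the distinguished self-embedding changes; this is precisely what Lemma~\ref{lem:operations}(1) is designed for. By contrast the two trivial implications and the cofinality argument are routine.
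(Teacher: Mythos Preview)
Your proof is correct and follows essentially the same route as the paper: take a countable elementary hull containing the putative ill-founded iterate and its witnesses, collapse, and build copy maps $\sigma_\nu\colon\bar M_\nu\prec M_\nu$ by induction (identity at successor steps, direct limit at limit steps, using Lemma~\ref{lem:operations} to propagate $\sigma_\nu^{+}(\bar\jmath_\nu)=j_\nu$), reaching a contradiction since $M_{\bar\theta}$ is well-founded while $\bar M_{\bar\theta}$ is not. The only organizational difference is that you interpose the cofinality argument $\forall\beta<\omega_1\,\Ithree_\beta\Rightarrow\Ithree_{\omega_1}$ before the reflection step, whereas the paper goes directly from ${<}\omega_1$-iterability to full iterability; this is harmless, and your cofinality observation (that failure can only occur at stages of cofinality $\omega$) is a pleasant bonus. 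One point the paper makes slightly more explicit than you do is the verification that every $\bar M_\nu$ has height of cofinality $\omega$ (via the image of the critical sequence), so that Lemma~\ref{lem:application} applies and the $\sigma_\nu$ are fully elementary rather than merely $\Sigma_1$; you flag this in your closing paragraph, and it is indeed routine.
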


\begin{proof}
 Only one direction is not obvious. 
 So suppose that $\forall\beta<\omega_1\ \Ithree_\beta(j)$ and that there exists $\theta$ such that $M_\theta$ is ill-founded. 
 Without loss of generality, we can assume that $\theta$ is least, limit and $\geq\omega_1$. 
 We will prove that this leads to a contradiction.

 Pick $\alpha$ large enough so that $\langle M_\nu \colon \nu<\theta\rangle\in V_\alpha$ together with witnesses of the ill-foundedness of $M_\theta$. 
 Let $\pi \colon \mathcal{P}\to V_\alpha$ the inverse of the collapse such that $j$, $\langle M_\nu : \nu<\theta\rangle$, $V_\lambda$, $\theta$, $\kappa_n$ for all $n\in\omega$ and the witnesses of the ill-foundedness of $M_\theta$ are all in the range of $\pi$, with $\mathcal{P}$ countable. 
 Let $\bar{M}_0=\pi^{-1}(V_\lambda)$, $\bar{\jmath}_0=\pi^{-1}(j)$ and $\bar{\theta}=\pi^{-1}(\theta)$, and let $\langle \bar{M}_\nu : \nu\leq\bar{\theta}\rangle$ be the iteration of $(\bar{M}_0,\bar{\jmath}_0)$. 
 Then $\bar{M}_{\bar{\theta}}$ is ill-founded in $\mathcal{P}$.

We want all the models of the iterates of $j$ and $\bar{\jmath}$ to satisfy the hypothesis of Lemma~\ref{lem:application}, so to have singular height. 
But note that, as $\langle\kappa_n : n\in\omega\rangle$ is cofinal in $V_\lambda$, for any $\nu<\theta$ we have that $\langle j_{0,\nu}(\kappa_n) : n\in\omega\rangle$ is cofinal in $M_\nu$, as $j_{0,\nu}$ is a $\Sigma^1_0$-elementary embedding by Lemma~\ref{lem:application} and $\langle j_{0,\nu}(\kappa_n) : n\in\omega\rangle=j_{0,\nu}^+(\langle\kappa_n : n\in\omega\rangle)$. 
Let $\mu_n=\pi^{-1}(\kappa_n)$. 
Then also $\langle\mu_n : n\in\omega\rangle$ is cofinal in $\bar{M}_0\cap \Ord$ and for every $\nu<\bar{\theta}$ $\langle \bar{\jmath}_{0,\nu}(\mu_n) : n\in\omega\rangle$ is cofinal in $\bar{M}_\nu\cap \Ord$. 
So $\bar{M}_\nu\cap \Ord$ has cofinality $\omega$, for every $\nu\leq\bar{\theta}$. 
As $\bar{\theta}$ is countable, all the $M_\alpha$ and $\bar{M}_\alpha$ are well-founded for $\beta<\bar{\theta}$ for case assumption. 

 We build by induction $\pi_\nu$, for every $\nu\leq\bar{\theta}$, such that:
\begin{enumerate}
\item 
$\pi_\nu \colon \bar{M}_\nu\prec M_\nu$ and it is cofinal;
\item 
$\pi_\nu\circ\bar{\jmath}_{\delta,\nu}=j_{\delta,\nu}\circ\pi_\delta$ for every $\delta<\nu$;
\item 
$\pi_\nu^+(\bar{\jmath}_\nu)=j_\nu$.
\end{enumerate}
\[
\begin{tikzpicture}
 \node (barM0) {$\bar{M}_0$};
 \node (barM02)[right of = barM0] {$\bar{M}_0$}
 edge [<-] node[below]{$\bar{\jmath}_0$}(barM0);
 \node (dots)[right of = barM02] {$\dots$};
 \node (barMalpha)[right of = dots] {$\bar{M}_\delta$};
 \node (dots2)[right of = barMalpha] {$\dots$};
 \node (barMnu)[right of = dots2] {$\bar{M}_\nu$}
 edge [<-,out=-135,in=-45] node[below]{$\bar{\jmath}_{\delta,\nu}$}(barMalpha);
 \node (barMnu2)[right of = barMnu, xshift=3mm] {$\bar{M}_\nu$}
 edge [<-] node[below]{$\bar{\jmath}_{\nu}$}(barMnu);
 \node (dots3)[right of = barMnu2] {$\dots$};
 \node (barMtheta)[right of = dots3] {$\bar{M}_{\bar{\theta}}$};
 \node (M0)[above of = barM0] {$V_\lambda$}
 edge [<-] node[left]{$\pi_0$}(barM0);
 \node (M02)[above of = barM02] {$V_\lambda$}
 edge [<-] node[above]{$j$}(M0)
 edge [<-] node[left]{$\pi_1$}(barM02);
 \node (dots3)[above of = dots] {$\dots$};
 \node (Malpha)[above of = barMalpha] {$M_\delta$}
 edge [<-] node[left]{$\pi_\delta$}(barMalpha);
 \node (dots4)[above of = dots2] {$\dots$};
 \node (Mnu)[above of = barMnu] {$M_\nu$}
 edge [<-] node[left]{$\pi_\nu$}(barMnu)
	 edge [<-,out=135,in=45] node[above]{$j_{\delta,\nu}$}(Malpha);
 \node (Mnu2)[above of = barMnu2] {$M_\nu$}
 edge [<-] node[left]{$\pi_{\nu+1}$}(barMnu2)
	 edge [<-] node[above]{$j_{\nu}$}(Mnu);
 \node (dots5)[above of = dots2] {$\dots$};
 \node (Mbartheta)[above of = barMtheta] {$M_{\bar{\theta}}$}
 edge [<-] node[left]{$\pi_{\bar{\theta}}$}(barMtheta);
 \node (dots6)[right of = Mbartheta] {$\dots$};
 \node (Mtheta)[right of = dots6] {$M_\theta$}
 edge [<-] node[below]{$\pi$}(barMtheta);
\end{tikzpicture} 
\]
If this can be achieved, we easily reach a contradiction: In $M_{\theta}$ there is a sequence that witnesses that $M_\theta$ is ill-founded, and by construction of $\pi$ such witnesses are in the range of $\pi$, therefore also $\bar{M}_{\bar{\theta}}$ is ill-founded. 
But then, by elementarity via $\pi_{\bar{\theta}}$, also $M_{\bar{\theta}}$ is ill-founded, a contradiction since $\bar{\theta}$ is countable and we assumed that all the $M_\alpha$ with $\alpha$ countable are well-founded.

For $\nu=0$, let $\pi_0=\pi\upharpoonright\bar{M}_0$. 
Then, of course, $\pi_0 \colon \bar{M}_0\prec V_\lambda$. 
It is cofinal because for all $n$, $\kappa_n\in\ran(\pi_0)$. 
Note that by elementarity $\bar{M}_0=(V_\eta)^{\mathcal{P}}$ for some $\eta$; so if $X\subseteq M_0$ and $X \in \mathcal{P} $, then $X$ is amenable in $\bar{M}_0$, and $\pi_0^+(X)=\pi(X)$, therefore $\pi_0^+(\bar{\jmath}_0)=\pi(\bar{\jmath}_0)=j$.

Let $\nu$ be a limit ordinal. 
Let $x\in\bar{M}_\nu$. 
Then there exist $\delta<\nu$ and $y\in\bar{M}_\delta$ such that $x=\bar{\jmath}_{\delta,\nu}(y)$. 
We define then $\pi_\nu(x)=j_{\delta,\nu}(\pi_\delta(y))$. 
It is easy to see that it is elementary and well defined. 
For any $n\in\omega$, then $\pi_\nu(\bar{\jmath}_{0,\nu}(\mu_n))=j_{0,\nu}(\pi_0(\mu_n))=j_{0,\nu}(\kappa_n)$, therefore $\pi_\nu$ is cofinal. 
Let $\delta<\nu$ and let $x\in\bar{M}_\delta$. 
Then $\pi_\nu(\bar{\jmath}_{\delta,\nu}(x))=j_{\delta,\nu}(\pi_\delta(x))$ by definition of $\pi_\nu$, therefore (2) holds. 
Also, $\pi_\nu^+(\bar{\jmath}_\nu)=\pi_\nu^+(\bar{\jmath}_{0,\nu}^+(\bar{\jmath}_0))=j_{0,\nu}^+(\pi_0^+(\bar{\jmath}_0))=j_{0,\nu}^+(j_0)=j_\nu$, the second equality holding by Lemma \ref{lem:operations}(2), so $\pi_\nu$ is as desired. 

Finally, let $\nu=\mu+1$. 
Then define $\pi_\nu=\pi_\mu$. 
(1) is immediate. 
For (2), we prove it for $\delta=\mu$, and the rest is by easy induction. 
Note that $\bar{\jmath}_{\mu,\nu}=\bar{\jmath}_\mu$ and $j_{\mu,\nu}=j_\mu$. 
Let $x\in\bar{M}_\nu$. 
Then 
\begin{equation*}
 \pi_\nu(\bar{\jmath}_{\mu,\nu}(x))=\pi_\nu(\bar{\jmath}_\mu(x))=\pi_\mu(\bar{\jmath}_\mu(x))=\pi_\mu^+(\bar{\jmath}_\mu)(\pi_\mu(x))=j_\mu(\pi_\mu(x))=j_{\mu,\nu}(\pi_\mu(x)).
\end{equation*}
For (3), $\pi_\nu^+(\bar{\jmath}_\nu)=\pi_\nu^+(\bar{\jmath}_\mu^+(\bar{\jmath}_\mu))=\pi_\mu^+(\bar{\jmath}_\mu)(\pi_\mu^+(\bar{\jmath}_\mu))=j_\mu(j_\mu)=j_\nu$, the second equality holding by Lemma \ref{lem:operations}(1).

But now there is $\pi_{\bar{\theta}} \colon \bar{M}_{\bar{\theta}}\prec M_{\bar{\theta}}$, with $M_{\bar{\theta}}$ well-founded because $\bar{\theta}$ is countable and $\bar{M}_{\bar{\theta}}$ ill-founded in $\mathcal{P}$, and therefore in $V$, contradiction.
\end{proof}

We say that  \( j \colon V_\lambda \prec V_\lambda \) is iterable if  \( j \) satisfies one of the three equivalent conditions of Proposition~\ref{prop:iterability}, and we denote with \( \langle ( M_ \alpha , j_ \alpha ) : \alpha \in \Ord \rangle \) the iteration of \( ( V_  \lambda  , j ) \).

\begin{lem}\label{lem:onestep}
Suppose \( N \) is a transitive model of \( \ZFC \) and that \( N \vDash E \) is a \( ( \kappa , \lambda ) \)-extender witnessing \( \kappa  \) is \( \omega \)-superstrong.
Let \( i_E \colon N \to \Ult ( N , E ) ^N = N' \) be the ultrapower embedding, let \( \kappa ' = i_E ( \kappa ) \), and let \( E' = i_E ( E ) \) so that \( E' \) is a \( ( \kappa ' , \lambda ) \)-extender witnessing in \( N' \) that \( \kappa ' \) is \( \omega \)-superstrong.
Then \( j ^+ ( j ) = i_{ E' } \upharpoonright V_  \lambda \), where \( j = i_E \upharpoonright V_  \lambda \).
\end{lem}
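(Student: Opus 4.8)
The plan is to unwind the definition of $j^+(j)$ from Lemma~\ref{lem:cofinalandamenable=>Sigma1} and to match it, piece by piece, with $i_{E'}$, exploiting the fact that although $i_E$ is a proper class of $N$, every finite-rank restriction $i_E\restriction V_\alpha$ with $\alpha<\lambda$ is an honest element of $N$ that $N$ recognizes---definably from $E$---as a chunk of the ultrapower embedding by $E$.

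First I would fix the ambient objects. Since $N'=\Ult(N,E)^N$ is an inner model of $N$ it is transitive with $N'\subseteq N$; together with $(V_\lambda)^N\subseteq N'$ (which is exactly the $\omega$-superstrongness of $\kappa$ in $N$) and the absoluteness of rank, this gives $(V_\lambda)^N=(V_\lambda)^{N'}$, which I write simply $V_\lambda$. As recalled just after the definition of $\omega$-superstrong, $i_E(\lambda)=\lambda$, so $j=i_E\restriction V_\lambda\colon V_\lambda\prec V_\lambda$ (Theorem~\ref{th:I2}), and $j$ is cofinal ($j(\kappa_n)=\kappa_{n+1}$, and the $\kappa_n$ are cofinal in $\lambda$) and amenable for $V_\lambda$. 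Since $\cof(V_\lambda\cap\Ord)=\omega\in V_\lambda$ and $V_\lambda^{<\omega}\subseteq V_\lambda$, Lemma~\ref{lem:application} applies, so $j^+(j)$ is defined and is an elementary embedding $V_\lambda\prec V_\lambda$; concretely $j^+(j)=\bigcup_{\alpha<\lambda}j(i_E\restriction V_\alpha)$, the sets $i_E\restriction V_\alpha$ being cofinal under inclusion among the sets $j\cap(V_\beta)^{V_\lambda}$ appearing in Lemma~\ref{lem:cofinalandamenable=>Sigma1}. On the other side, applying $i_E$ to ``$E$ witnesses $\kappa$ is $\omega$-superstrong'' gives, by elementarity, that $E'=i_E(E)$ witnesses in $N'$ that $\kappa'=i_E(\kappa)$ is $\omega$-superstrong with associated supremum $i_E(\lambda)=\lambda$; in particular $i_{E'}(\lambda)=\lambda$, the rank-$<\lambda$ part of $\Ult(N',E')^{N'}$ is exactly $V_\lambda$, and $i_{E'}\restriction V_\lambda$ is a function $V_\lambda\to V_\lambda$.

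The heart is a single observation applied to each $\alpha<\lambda$. The function $i_E\restriction V_\alpha$ (with domain $V_\alpha$ and range included in $V_{i_E(\alpha)}$, both of rank $<\lambda$) is a set of $N$, and $N$ proves---with parameters $i_E\restriction V_\alpha$, $E$, $\alpha$---that it equals the restriction to $V_\alpha$ of the ultrapower embedding of the universe by $E$ (a class definable from $E$): this is just the definition of $i_E$ read inside $N$. Applying the elementary $i_E\colon N\prec N'$, and using $i_E(E)=E'$, $i_E(\alpha)<i_E(\lambda)=\lambda$, and $(V_{i_E(\alpha)})^{N'}=V_{i_E(\alpha)}$, we conclude that in $N'$
\[
 j(i_E\restriction V_\alpha)=i_E(i_E\restriction V_\alpha)=i_{E'}\restriction V_{i_E(\alpha)}.
\]
Taking the union over $\alpha<\lambda$: since $i_E$ is order preserving with $i_E(\lambda)=\lambda$, the ordinals $i_E(\alpha)$ are cofinal in $\lambda$ (they include all $\kappa_{n+1}$), so $\bigcup_{\alpha<\lambda}V_{i_E(\alpha)}=V_\lambda$ and the maps $i_{E'}\restriction V_{i_E(\alpha)}$ cohere to $i_{E'}\restriction V_\lambda$; hence $j^+(j)=\bigcup_{\alpha<\lambda}j(i_E\restriction V_\alpha)=\bigcup_{\alpha<\lambda}i_{E'}\restriction V_{i_E(\alpha)}=i_{E'}\restriction V_\lambda$, as required.

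I expect the only genuinely delicate point to be the bookkeeping in the middle step: because $i_E$ is not an element of $N$ one cannot literally form ``$i_E(i_E)$'', and the argument instead relies on each set-sized restriction $i_E\restriction V_\alpha$ ($\alpha<\lambda$) being an element of $N$ definably identified with a piece of the $E$-ultrapower map, so that pushing it through $i_E$ and re-reading the definition returns the corresponding piece of the $E'$-ultrapower map. Everything else---the coincidence of $V_\lambda$ as computed in $N$, in $N'$ and in $\Ult(N',E')^{N'}$, together with the cofinality of $\{i_E(\alpha):\alpha<\lambda\}$ in $\lambda$---is routine.
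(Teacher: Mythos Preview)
Your argument is correct and is essentially a fleshed-out version of the paper's one-line proof. The paper simply observes that a $(\kappa,\lambda)$-extender $F$ is completely determined by $i_F\restriction V_\lambda$, which is exactly the content of your ``heart'' step: each $i_E\restriction V_\alpha$ is definable in $N$ from $E$, so elementarity of $i_E$ sends it to $i_{E'}\restriction V_{i_E(\alpha)}$.
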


\begin{proof}
The result follows from the fact that a \( ( \kappa  ,  \lambda ) \)-extender \( F \) is completely determined by \( i_F \restriction V_  \lambda  \).
\end{proof}

The next result shows \( \Ithree_\infty \) sits between $\Ithree$ and $\Itwo$.

\begin{prop}\label{prop:extender}
$\Itwo(j)$ implies $\Ithree_\infty (j ) $,  for all $j \colon V_  \lambda  \to V_  \lambda $.
\end{prop}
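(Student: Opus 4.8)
The plan is to realise the iteration of $(V_\lambda,j)$ as the restriction to its $V_\lambda$-part of a genuine extender iteration of $V$, which is well-founded at every stage simply because $V$ is a proper class. Throughout write $M^j_\alpha$ for the $\alpha$-th model of the iteration of $(V_\lambda,j)$ (with embeddings $j_{\beta,\alpha}$), reserving $M_\alpha$, $i_{\beta,\alpha}$ for the extender iteration below. Since $\Itwo(j)$ holds, by Theorem~\ref{th:I2} fix $i\colon V\prec M$ extending $j$ with $V_\lambda\subseteq M$, and let $E$ be the $(\kappa,\lambda)$-extender derived from $i$. Then $E$ is a \emph{set}, $V_\lambda\subseteq\Ult(V,E)$ (so $E$ witnesses that $\kappa$ is $\omega$-superstrong), and $i_E\upharpoonright V_\lambda=j$: as $E$ is $\kappa$-complete and $\cof(\lambda)=\omega<\kappa$, $i_E$ is continuous at $\lambda$ and $i_E(\lambda)=\lambda$, so the factor embedding $k\colon\Ult(V,E)\to M$ (which fixes $V_\lambda$) gives $j=i\upharpoonright V_\lambda=k\circ(i_E\upharpoonright V_\lambda)=i_E\upharpoonright V_\lambda$. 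Now iterate $(V,E)$: since $E$ is a set and $V$ is a proper class, $\langle(M_\alpha,E_\alpha,i_{\beta,\alpha}):\beta\le\alpha\in\Ord\rangle$ is defined at all lengths with every $M_\alpha$ well-founded, as recalled at the start of this section. Writing $N_\alpha:=(V_{i_{0,\alpha}(\lambda)})^{M_\alpha}$, each $E_\alpha=i_{0,\alpha}(E)$ is in $M_\alpha$ a $(\,i_{0,\alpha}(\kappa),i_{0,\alpha}(\lambda)\,)$-extender witnessing $\omega$-superstrongness, whence $N_\alpha\subseteq M_{\alpha+1}$ and $i_{\alpha,\alpha+1}(i_{0,\alpha}(\lambda))=i_{0,\alpha}(\lambda)$ (again $\kappa$-completeness inside $M_\alpha$, applied to the $\omega$-long critical sequence of $i_{E_\alpha}$), so that $N_{\alpha+1}=N_\alpha$; in particular $N_\alpha=V_\lambda$ for finite $\alpha$.

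The core is an induction on $\alpha$ establishing that $M^j_\alpha$ is well-founded, that there is an isomorphism $\Phi_\alpha\colon M^j_\alpha\to N_\alpha$ with $\Phi_\alpha\circ j_{\beta,\alpha}=i_{\beta,\alpha}\circ\Phi_\beta$ for $\beta\le\alpha$, and that $\Phi_\alpha$ conjugates $j_\alpha$ to $i_{\alpha,\alpha+1}\upharpoonright N_\alpha$ — a self-map of $N_\alpha$, since $N_{\alpha+1}=N_\alpha$. One takes $\Phi_0=\mathrm{id}$. At a successor $\alpha+1$ one has $M^j_{\alpha+1}=M^j_\alpha$ and $N_{\alpha+1}=N_\alpha$, so $\Phi_{\alpha+1}:=\Phi_\alpha$; the conjugation clause — that $j_{\alpha+1}=j_\alpha^+(j_\alpha)$ corresponds to $i_{\alpha+1,\alpha+2}\upharpoonright N_\alpha$ — is exactly Lemma~\ref{lem:onestep} applied inside $M_\alpha$ with the extender $E_\alpha$, using the inductive identification $j_\alpha\leftrightarrow i_{E_\alpha}^{M_\alpha}\upharpoonright N_\alpha$ and $i_{E_\alpha}^{M_\alpha}(E_\alpha)=E_{\alpha+1}$. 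At a limit $\gamma$ the maps $i_{\beta,\gamma}\upharpoonright N_\beta$ form a cocone over the directed system $\langle M^j_\beta,j_{\beta',\beta}\rangle_{\beta'<\beta<\gamma}$, so the universal property of the direct limit $M^j_\gamma$ yields an elementary $\Phi_\gamma\colon M^j_\gamma\to M_\gamma$ extending the earlier $\Phi_\beta$; since every element of $N_\gamma$ equals $i_{\beta,\gamma}(v)$ for some $\beta<\gamma$ and some $v\in N_\beta$ (by elementarity, $i_{0,\beta}(\lambda)$ being carried to $i_{0,\gamma}(\lambda)$), $\ran(\Phi_\gamma)=N_\gamma$, so $M^j_\gamma$ is well-founded, and the conjugation clause at $\gamma$ follows from Lemma~\ref{lem:operations} applied to the commuting square relating $j_{0,\gamma}$, $\Phi_\gamma$ and $i_{0,\gamma}\upharpoonright V_\lambda$, together with Lemma~\ref{lem:application} (which makes $j_{0,\gamma}$ fully elementary, hence makes $j_\gamma=j^+_{0,\gamma}(j)$ commute with the action of $j$).

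Running this induction through all ordinals and using that every $M_\alpha$ is well-founded, we conclude that every $M^j_\alpha$ is well-founded, i.e. $j$ is iterable, so $\Ithree_\infty(j)$ holds. The delicate point is precisely the conjugation clause at limit stages: verifying that the transported limit embedding $j_\gamma$ is once more the restriction to $N_\gamma$ of an extender ultrapower map $i_{\gamma,\gamma+1}$, so that the configuration reproduces itself and the induction does not break off at some countable stage. This is where Lemmas~\ref{lem:application}, \ref{lem:operations} and~\ref{lem:onestep}, the continuity of the $i_{0,\alpha}$ at $\lambda$, the identity $N_{\alpha+1}=N_\alpha$, and the uniqueness of the extension of an $\Ithree$-embedding all come together.
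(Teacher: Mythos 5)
Your argument is correct and is essentially the paper's own proof: both realise the iteration of $(V_\lambda,j)$ as the $V_{i_{0,\alpha}(\lambda)}$-parts of the everywhere-well-founded extender iteration of $V$, with Lemma~\ref{lem:onestep} driving the successor step of the induction. You merely spell out more explicitly the limit-stage identification and the verification that $i_E\upharpoonright V_\lambda=j$, which the paper compresses into ``an easy induction on $\alpha$.''
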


\begin{proof}
Suppose $\Itwo(j)$ and let \( \kappa = \crt ( j ) \).
By Martin's Theorem~\ref{th:I2} let \( E \) be a $(\kappa,\lambda)$-extender such that $i \colon V\prec  \Ult ( V , E ) \supseteq V_\lambda$ and \( i \upharpoonright V_\lambda = j \).
Let \( \langle ( N_ \alpha , E_ \alpha , i_{ \beta , \alpha } ) : \beta \leq \alpha \in \Ord \rangle \) be the iteration of \( V \) via \( E \).
As argued at the beginning of this section, every \( N_ \alpha \) is well-founded.
It is enough to show that \( \langle ( M_ \alpha , j_ \alpha ) : \alpha \in \Ord \rangle \) is the iteration of \( ( V_  \lambda  , j ) \), where
\begin{align*}
 \lambda_ \alpha &=  i_{0 , \alpha } (  \lambda ) , & M_ \alpha & =  N_ \alpha \cap V_{ \lambda _ \alpha  } , & j_ \alpha & =  i_{ \alpha , \alpha + 1} \restriction M_ \alpha .
\end{align*}
This boils-down to show that \( j_ \alpha^+  ( j_ \alpha ) = j_ {\alpha + 1} \), which follows from Lemma~\ref{lem:onestep} and an easy induction on \( \alpha \).
\end{proof}

Therefore all the iterable embeddings are in consistency strength between $\Ithree$ and $\Itwo$. 
Are they strictly or strongly between them? 
The tools developed in~\cite{Laver} by Laver will be essential to prove that $\Ithree_\infty$ is strongly between $\Ithree$ and $\Itwo$:

\begin{prop}[Laver, Square root of elementary embeddings,~\cite{Laver}]\label{prop:laver}
Let $j \colon V_\lambda\prec V_\lambda$ and let $\kappa =\crt(j)$.
{\begin{enumerate}
\item  
If \( j \) is $\Sigma^1_1$-elementary (so $\Itwo(j)$) and  $\beta<\kappa$, then there exists $k \colon V_\lambda\prec V_\lambda$ such that $k^+(k)=j$ and $\beta<\crt(k)<\kappa$.
\item 
If \( j \) is  $\Sigma^1_{n+2}$, then for any $B\subseteq V_\lambda$ there exists $\lambda'< \kappa $ and $J \colon V_{\lambda'}\prec V_\lambda$ that is $\Sigma^1_n$ such that $B\in\ran(J^+)$.
 \end{enumerate} }
\end{prop}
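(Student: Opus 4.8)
The two statements are proved by reflection: throughout one uses that a $\Sigma^1_m$-elementary $j$ transfers $\Sigma^1_m$- and $\Pi^1_m$-properties between $(V_\lambda,A)$ and $(V_\lambda,j^+(A))$, combined with the fact that the first iterate $j_1=j^+(j)$ has critical point $\kappa_1=j(\kappa)$ and admits $j$ itself as a witness of the relevant existential statement.

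\smallskip
\textbf{Part (1).} The key point is that
\[
\Phi(\mathring A)\ :\equiv\ \exists K\,\bigl(K\text{ is a cofinal }\Sigma_0\text{-elementary map }V_\lambda\to V_\lambda\ \wedge\ K^+(K)=\mathring A\ \wedge\ \crt(K)>\beta\bigr)
\]
is a $\Sigma^1_1$ property of the second-order parameter $\mathring A\subseteq V_\lambda$, with $\beta<\kappa$ a first-order parameter. Indeed, $K^+(K)=\bigcup_{\alpha<\lambda}K(K\cap V_\alpha)$ unwinds into a first-order condition on $(V_\lambda;K,\mathring A)$ (each $K\cap V_\alpha$ lies in $V_\lambda$); ``$K$ is a cofinal $\Sigma_0$-elementary map $V_\lambda\to V_\lambda$'' is $\Sigma^1_1$ in $K$ (quantify in the $\Sigma_0$-satisfaction relation for $V_\lambda$, which is pinned down by a first-order condition), and by the method of Lemma~\ref{lem:application} in the case $M=N=V_\lambda$, where one only needs $\cof(\lambda)=\omega$, such a $K$ is automatically fully elementary. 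Now $\Phi(j_1)$ holds, witnessed by $K=j$, since $j^+(j)=j_1$ and $\crt(j)=\kappa>\beta$. As $j$ is $\Sigma^1_1$-elementary and $j(\beta)=\beta$, we get $V_\lambda\models\Phi(j)$ iff $V_\lambda\models\Phi(j^+(j))=\Phi(j_1)$, hence $\Phi(j)$: there is an elementary $k\colon V_\lambda\prec V_\lambda$ with $k^+(k)=j$ and $\crt(k)>\beta$. Finally $\crt(k)<\kappa$ is automatic: $k$ is nontrivial (otherwise $k^+(k)=\mathrm{id}\neq j$), so $\kappa=\crt(j)=\crt(k^+(k))=k(\crt(k))>\crt(k)$.

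\smallskip
\textbf{Part (2).} Here I would reflect through the $\Sigma^1_{n+2}$-elementarity of $j$ the property
\[
\Theta(\rho,\mathring A)\ :\equiv\ \exists\lambda'<\rho\ \exists J\,\bigl(J\colon V_{\lambda'}\prec V_\lambda\text{ is }\Sigma^1_n\text{-elementary}\ \wedge\ \mathring A\in\ran(J^+)\bigr),
\]
with $\rho$ a first-order parameter (a candidate bound on $\lambda'$) and $\mathring A\subseteq V_\lambda$ second-order. The complexity bookkeeping is: the quantifier $\exists\lambda'<\rho$ is first-order; since $\mathscr P(V_{\lambda'})=V_{\lambda'+1}\in V_\lambda$, both ``$J$ is an embedding $V_{\lambda'}\to V_\lambda$'' and ``$\mathring A\in\ran(J^+)$'' are, modulo the $\Sigma_0$-satisfaction relation, first-order in $(V_\lambda;J,\mathring A)$; and the only high clause, ``$J$ is $\Sigma^1_n$-elementary'', is $\Sigma^1_{n+2}$ by Lemma~2.1 of~\cite{Laver}, so an outermost $\exists J$ keeps $\Theta$ within $\Sigma^1_{n+2}$. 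Granting this, since $j$ is $\Sigma^1_{n+2}$-elementary, $j(\kappa)=\kappa_1$, and $j^+(B)$ is the value of $j^+$ at $B$, we get
\[
V_\lambda\models\Theta(\kappa,B)\ \longleftrightarrow\ V_\lambda\models\Theta(\kappa_1,j^+(B)),
\]
so everything reduces to the \emph{base step}: producing $\lambda'<\kappa_1$ and a $\Sigma^1_n$-elementary $J\colon V_{\lambda'}\prec V_\lambda$ with $j^+(B)\in\ran(J^+)$; once we have this, $\Theta(\kappa,B)$ is exactly the conclusion for $B$.

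\smallskip
The base step is where I expect the real work, and it is the place where the extension of $j$ to an $i\colon V\prec M$ with $V_\lambda\subseteq M$ (Theorem~\ref{th:I2}) has to be combined with the square-root construction of Part~(1). Such a $J$ cannot be a restriction of $j$ (those have codomain a proper $V_\gamma$, or are trivial on $V_\kappa$): the range of $J$ must be an elementary submodel of $V_\lambda$ whose transitive collapse is a genuine rank-initial-segment $V_{\lambda'}$, which forces that submodel to compute power sets correctly. My plan is to build $J$ as the inverse collapse of the substructure of $V_\lambda$ generated by $V_\kappa$, $j^+(B)$, and sufficiently much of $j$ (using that $j\restriction V_{\kappa_m}\in V_\lambda$ for every $m$), arranged so that closure under ``all subsets, up to the relevant height'' holds — and in the reflected picture this closure is exactly what the embedding witnessing $\Itwo$ supplies; $\Sigma^1_n$-elementarity of $J$ is then inherited from $j$ via the elementarity of the collapse together with the same complexity bound for ``being $\Sigma^1_n$-elementary''. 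Taking $B=j$ in (2) yields, as the principal application, a $\Sigma^1_n$-elementary $\bar\jmath\colon V_{\lambda'}\prec V_{\lambda'}$ with $\lambda'<\kappa$.
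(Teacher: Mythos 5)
The paper does not actually prove this proposition---it is quoted from Laver's paper \cite{Laver}---so your proposal can only be judged on its own terms. Part (1) is correct and complete, and is the standard argument: ``$\exists K\,(K$ cofinal and $\Sigma_0$-elementary from $V_\lambda$ to $V_\lambda$, $K^+(K)=\mathring{A}$, $\crt(K)>\beta)$'' is a $\Sigma^1_1$ property of $\mathring{A}$ (using that cofinal $\Sigma_0$-elementary self-maps of $V_\lambda$ are fully elementary), it holds of $j_1=j^+(j)$ with the free witness $K=j$, and $\Sigma^1_1$-elementarity of $j$ pulls it back; the inequality $\crt(k)<\kappa=k(\crt(k))$ is as you say.

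Part (2), however, has a genuine gap. Your reflection step is sound bookkeeping, but it only reduces the claim to another instance of itself: you must still exhibit some $\lambda'<\kappa_1$ and a $\Sigma^1_n$-elementary $J\colon V_{\lambda'}\prec V_\lambda$ with $j^+(B)\in\ran(J^+)$, and this ``base step'' carries the entire content of the lemma. Unlike Part (1), where the reflected statement has $j$ itself sitting there as a witness, here nothing is available for free, and you explicitly defer the construction to a plan that is not carried out and faces a concrete obstruction: for the transitive collapse of a cofinal $X\prec V_\lambda$ to be a genuine rank initial segment $V_{\lambda'}$, the set $X$ must \emph{cover} the full power set of its transitive part (every $A\subseteq V_\mu$, where $\mu$ is the first ordinal omitted, must arise as $y\cap V_\mu$ for some $y\in X$), a richness condition that a hull generated by $V_\kappa$, $j^+(B)$ and the fragments $j\restriction V_{\kappa_m}$ does not automatically satisfy and that you do not verify; saying that the $\Itwo$-extension ``supplies'' it is precisely the assertion that needs proof. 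In the known arguments the witness $J$ is manufactured directly from the embedding machinery (restrictions of $j$ along its critical sequence and iterated applications of the square-root lemma of Part (1)), and until that witness is actually produced, Part (2) remains unproved.
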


Proposition~\ref{prop:laver}(1) is enough to prove that $\Itwo$ strictly implies $\Ithree_\infty$: Let $j \colon V_\lambda\prec V_\lambda$ be a $\Sigma^1_1$-elementary embedding with least critical point, so that $j$ witnesses $\Itwo$. 
Then by Proposition~\ref{prop:laver}(1) there is a $k \colon V_\lambda\prec V_\lambda$ such that $\crt(j)<\crt(k)$. 
Since $j$ was chosen with least critical point, $k$ cannot be $\Sigma^1_1$. 
But $k$ is iterable, as $k_1=j$ and therefore $k_{n+1}=j_n$ and their limit iterations are the same. 
But we can do better:

\begin{prop}\label{prop:I2>>iterable}
Let $j \colon V_\lambda\prec V_\lambda$ be a $\Sigma^1_1$-elementary embedding. 
Then there is a $\lambda'<\lambda$ and a $k \colon V_{\lambda'}\prec V_{\lambda'}$ that is iterable. 
In other words, $\Itwo$ strongly implies $\Ithree_\infty$.
\end{prop}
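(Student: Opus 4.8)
The plan is to reflect $j$ below its critical point using Laver's restriction lemma (Proposition~\ref{prop:laver}(2)), obtaining some $k\colon V_{\lambda'}\prec V_{\lambda'}$ with $\lambda'<\lambda$, and then to transfer iterability from $j$ down to $k$ by essentially the same copying construction that appears in the proof of Proposition~\ref{prop:iterability}.

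First I would set things up. Since $j\in\mathscr{E}_1(\lambda)=\mathscr{E}_2(\lambda)$ by Theorem~\ref{LaverMartin}(1), $j$ is in fact $\Sigma^1_2$-elementary; let $\kappa=\crt(j)$. Viewing $j$ as a subset of $V_\lambda$ (a Kuratowski pair of elements of $V_\lambda$ again lies in $V_\lambda$, as $\lambda$ is a limit), Proposition~\ref{prop:laver}(2) applied with $n=0$ and $B=j$ yields an ordinal $\lambda'<\kappa$, a cofinal $\Sigma^1_0$-elementary $J\colon V_{\lambda'}\prec V_\lambda$, and a set $k\subseteq V_{\lambda'}$ with $J^+(k)=j$ (cofinality of $J$ is implicit in the very definition of $J^+$, cf.\ Lemma~\ref{lem:cofinalandamenable=>Sigma1}). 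I would then check that $k$ witnesses $\Ithree(\lambda')$ with $\lambda'$ the supremum of its critical sequence. By Lemma~\ref{lem:cofinalandamenable=>Sigma1} we have $J\colon(V_{\lambda'},k)\prec_1(V_\lambda,j)$, so $k$ has a critical point $\mu_0$ with $J(\mu_0)=\kappa$ and a critical sequence $\langle\mu_n:n\in\omega\rangle$ with $J(\mu_n)=\kappa_n$; since $J$ is cofinal and order-preserving and $\sup_n\kappa_n=\lambda$, necessarily $\sup_n\mu_n=\lambda'$, hence $\cof(\lambda')=\omega$. Thus $J$ meets the hypotheses of Lemma~\ref{lem:application}, so $J\colon(V_{\lambda'},k)\prec(V_\lambda,j)$ is fully elementary in the language of set theory with a predicate for the embedding; since ``$\mathring{k}$ is an elementary embedding of the universe onto itself'' is expressed by a first-order scheme — the one figuring in the folklore argument that $j^+(k)$ is an elementary embedding — which holds in $(V_\lambda,j)$, it holds in $(V_{\lambda'},k)$, so $k\colon V_{\lambda'}\prec V_{\lambda'}$.

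Now $\Itwo(j)$ holds, so by Proposition~\ref{prop:extender} $j$ is iterable; let $\langle(N_\alpha,j_\alpha):\alpha\in\Ord\rangle$ be the iteration of $(V_\lambda,j)$, every $N_\alpha$ being well-founded. Writing $\langle(M_\alpha,k_\alpha):\alpha\in\Ord\rangle$ for the iteration of $(V_{\lambda'},k)$ (whose models are a priori only partially defined, pending well-foundedness at limit stages), I would build by induction on $\alpha$, exactly as the maps $\pi_\nu$ are built in the proof of Proposition~\ref{prop:iterability}, embeddings $J_\alpha$ such that $J_\alpha\colon M_\alpha\prec N_\alpha$ is cofinal, $J_\alpha\circ k_{\delta,\alpha}=j_{\delta,\alpha}\circ J_\delta$ for all $\delta<\alpha$, and $J_\alpha^+(k_\alpha)=j_\alpha$. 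Put $J_0=J$. At a successor, $M_{\alpha+1}=M_\alpha$, so set $J_{\alpha+1}=J_\alpha$; then $J_{\alpha+1}^+(k_{\alpha+1})=\bigl(J_\alpha^+(k_\alpha)\bigr)^+\bigl(J_\alpha^+(k_\alpha)\bigr)=j_\alpha^+(j_\alpha)=j_{\alpha+1}$ by Lemma~\ref{lem:operations}(1), and commutativity is immediate. At a limit $\gamma$, set $J_\gamma\bigl(k_{\delta,\gamma}(y)\bigr)=j_{\delta,\gamma}\bigl(J_\delta(y)\bigr)$; this is well defined and elementary, it is cofinal because $J_\gamma\bigl(k_{0,\gamma}(\mu_n)\bigr)=j_{0,\gamma}(\kappa_n)$ and these are cofinal in $N_\gamma$, and $J_\gamma^+(k_\gamma)=J_\gamma^+\bigl(k_{0,\gamma}^+(k)\bigr)=j_{0,\gamma}^+\bigl(J^+(k)\bigr)=j_{0,\gamma}^+(j)=j_\gamma$ by Lemma~\ref{lem:operations}(2). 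Since an elementary embedding from an ill-founded structure forces the target to be ill-founded, and every $N_\alpha$ is well-founded, every $M_\alpha$ is well-founded; hence $k$ is iterable. As $\lambda'<\kappa<\lambda$, this shows $\ZFC\vdash\Itwo(j,\lambda)\to\exists\lambda'<\lambda\ \exists k\ \Ithree_\infty(k,\lambda')$, which together with Proposition~\ref{prop:extender} (supplying the bare implication $\Itwo\to\Ithree_\infty$) is exactly the assertion that $\Itwo$ strongly implies $\Ithree_\infty$ in the sense of Definition~\ref{strongly}.

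The heart of the argument, and the main obstacle, is this last inductive construction of the $J_\alpha$ for \emph{all} ordinals $\alpha$: well-foundedness of $M_\alpha$ is read off only \emph{after} the copying map $J_\alpha$ has been produced, so at limit stages one must simultaneously exhibit $J_\gamma$, verify it is cofinal, and verify the $^+$-extension identities through Lemma~\ref{lem:operations} — which is precisely where the singular-cofinality hypothesis of Lemma~\ref{lem:application} and the commutativity of the $^+$-operation enter, just as in the proof of Proposition~\ref{prop:iterability}. The only other point requiring care is the verification that the reflected $J$ is cofinal with $\cof(\lambda')=\omega$, so that $k$ genuinely is an $\Ithree$-embedding on $V_{\lambda'}$ and the iteration machinery of Section~\ref{sec:iterate} applies to it.
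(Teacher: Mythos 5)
Your proposal is correct and follows essentially the same route as the paper: reflect $j$ below its critical point via Proposition~\ref{prop:laver}(2) with $B=j$, check the reflected $k$ is a genuine $\Ithree$-embedding, and then transfer well-foundedness by the same copying construction as in Proposition~\ref{prop:iterability}. The only (harmless) deviation is that you run the copying maps $J_\alpha$ through all ordinals, whereas the paper stops at countable lengths and then invokes Proposition~\ref{prop:iterability} to conclude full iterability; your extra care in verifying that $\lambda'$ has cofinality $\omega$ and that the critical sequence of $k$ is cofinal in $\lambda'$ is a point the paper glosses over.
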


\begin{proof}
Let $j \colon V_\lambda\prec V_\lambda$ be $\Sigma^1_1$. 
Use Proposition~\ref{prop:laver}(2) above with $B=j$, and let $k$ with $\lambda'<\lambda$ and $J^+(k)=j$ (remember that being $\Sigma^1_1$ is the same as being $\Sigma^1_2$). 
Note that ``$j \colon V_\lambda\prec V_\lambda$'' is $\Delta_1^1$ in $V_\lambda$, so $k \colon V_{\lambda'}\prec V_{\lambda'}$. 
Let $\langle ( \bar{M}_\alpha,k_\alpha ) : \alpha<\gamma\rangle$ be an iteration of $k$ of length $\gamma<\omega_1$, and let $\langle ( M_\alpha , j_\alpha ) : \alpha<\omega_1\rangle$ be the iteration of length \( \omega _1 \) of $j$. 
Now the proof is the same as in Proposition~\ref{prop:iterability}: define for any $\nu\leq\gamma$ $J_\nu \colon \bar{M}_\nu\prec M_\nu$, cofinal, such that $J_\nu\circ k_{\alpha,\nu}=j_{\alpha,\nu}\circ J_\alpha$ for any $\alpha<\nu$ and $J_\nu^+(k_\nu)=j_\nu$. 
\[
\begin{tikzpicture}
 \node (barM0) {$V_{\lambda'}$};
 \node (barM02)[right of = barM0] {$V_{\lambda'}$}
 edge [<-] node[below]{$k$}(barM0);
 \node (dots)[right of = barM02] {$\dots$};
 \node (barMalpha)[right of = dots] {$\bar{M}_\alpha$};
 \node (dots2)[right of = barMalpha] {$\dots$};
 \node (barMnu)[right of = dots2] {$\bar{M}_\nu$}
 edge [<-,out=-135,in=-45] node[below]{$k_{\alpha,\nu}$}(barMalpha);
 \node (barMnu2)[right of = barMnu, xshift=3mm] {$\bar{M}_\nu$}
 edge [<-] node[below]{$k_{\nu}$}(barMnu);
 \node (dots3)[right of = barMnu2] {$\dots$};
 \node (barMtheta)[right of = dots3] {$\bar{M}_{\gamma}$};
 \node (M0)[above of = barM0] {$V_\lambda$}
 edge [<-] node[left]{$J_0$}(barM0);
 \node (M02)[above of = barM02] {$V_\lambda$}
 edge [<-] node[above]{$j$}(M0)
 edge [<-] node[left]{$J_1$}(barM02);
 \node (dots3)[above of = dots] {$\dots$};
 \node (Malpha)[above of = barMalpha] {$M_\alpha$}
 edge [<-] node[left]{$J_\alpha$}(barMalpha);
 \node (dots4)[above of = dots2] {$\dots$};
 \node (Mnu)[above of = barMnu] {$M_\nu$}
 edge [<-] node[left]{$J_\nu$}(barMnu)
	 edge [<-,out=135,in=45] node[above]{$j_{\alpha,\nu}$}(Malpha);
 \node (Mnu2)[above of = barMnu2] {$M_\nu$}
 edge [<-] node[left]{$J_{\nu+1}$}(barMnu2)
	 edge [<-] node[above]{$j_{\nu}$}(Mnu);
 \node (dots5)[above of = dots2] {$\dots$};
 \node (Mbartheta)[above of = barMtheta] {$M_{\gamma}$}
 edge [<-] node[left]{$J_\gamma$}(barMtheta);
\end{tikzpicture}
\]
If $\bar{M}_\gamma$ were ill-founded, then because of the elementarity of $J_\gamma \colon \bar{M}_\gamma\prec M_\gamma$ also $M_\gamma$ would be ill-founded, but $j$ is iterable, so $\bar{M}_\gamma$ is well-founded. 
This holds for any $\gamma<\omega_1$, and therefore by Proposition~\ref{prop:iterability} $k$ is iterable.
\end{proof}

Therefore the iterability hypotheses are not only between $\Ithree$ and $\Itwo$, but strongly below $\Itwo$. 
But this is where Laver's tools stop, as they are too coarse to actually be useful in investigating gaps under $\Ithree_\infty$. 
For this, we need tools that are partially borrowed from the ``classic'' iterability. 

\section{The iterability hierarchy} \label{hierarchy}
Recall from Definition~\ref{def:E(lambda)} that \( \mathscr{E} (  \lambda  ) \) is the set of all \( j \colon V_  \lambda \prec V_  \lambda \).
Then
\[
\mathscr{W}_\alpha ( \lambda ) = \{ j \in \mathscr{E} (  \lambda ) : M_{ \omega \cdot \alpha } \text{ is well-founded} \}
\]
is the set of all \( j \colon V_  \lambda \prec V_  \lambda \) that are \( \alpha \)-iterable.
Therefore
\[
\mathscr{E} ( \lambda ) = \mathscr{W}_0 ( \lambda ) \supseteq \mathscr{W}_1 ( \lambda ) \supseteq \dots \supseteq \mathscr{W}_\alpha ( \lambda ) \supseteq \dots .
\]
With this notations Propositions~\ref{prop:iterability} and~\ref{prop:I2>>iterable} become 
\[ 
\mathscr{E}_1 ( \lambda )  \subset \mathscr{W}_{ \omega _1}( \lambda ) = \bigcap_{ \alpha < \omega _1} \mathscr{W}_ \alpha ( \lambda ) = \mathscr{W}_ \beta ( \lambda ) 
\] 
for any \( \beta \geq \omega _1 \).
We will prove that there is a strong hierarchy below $\Ithree_\infty$:

\begin{teo}\label{th:step}
\begin{itemize}
\item 
If $\alpha<\omega_1$, then $\Ithree_{\alpha+1} (  \lambda ) $ strongly implies $\Ithree_\alpha  (  \lambda ) $, i.e., if $j \in \mathscr{W}_{\alpha+1} (  \lambda )$ then there is a $\lambda'<\lambda$ and an $e \in \mathscr{W}_{\alpha} ( \lambda' )$.
\item 
If $\nu<\omega_1$ is limit, then $\Ithree_\nu (  \lambda )$ strongly implies $\Ithree_{<\nu} (  \lambda )$, i.e., if $j \in \mathscr{W}_{\nu} (  \lambda )$, then there are a $\lambda' < \lambda$ and an $e \in \bigcap_{ \alpha < \nu} \mathscr{W}_{\alpha} ( \lambda' )$.
\item 
If $\nu\leq\omega_1$ is limit, then $\Ithree_{<\nu} (  \lambda )$ strongly implies $\forall\alpha<\nu\ \Ithree_\alpha (  \lambda )$, i.e., if $j \in \bigcap_{\alpha < \nu} \mathscr{W}_\alpha (\lambda)$, then there is a $\lambda'<\lambda$ such that $\mathscr{W}_\alpha (\lambda') \neq \emptyset$ for all \( \alpha<\nu \).
\end{itemize}
Moreover, for any instance of the above the $\lambda'$ that witnesses the strong implication can be cofinally high under $\lambda$, so for any $\eta<\lambda$ there exists $\eta<\lambda'<\lambda$ that witnesses the strong implication. 
We will call this cofinal strong implication.
\end{teo}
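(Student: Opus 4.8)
The plan is to reduce all three clauses, together with the cofinal strengthening, to one construction and then push it through the copying machinery already used in Propositions~\ref{prop:iterability} and~\ref{prop:I2>>iterable}. The construction I would aim for is a \emph{descent lemma}: if $j\in\mathscr W_\gamma(\lambda)$ with $\gamma\ge 1$, then for every $\eta<\lambda$ there are $\lambda'\in(\eta,\lambda)$, a cofinal $\pi\colon V_{\lambda'}\prec V_\lambda$, and $e\colon V_{\lambda'}\prec V_{\lambda'}$ with $\pi^{+}(e)=j$, such that $e$ is $\alpha$-iterable whenever $\gamma=\alpha+1$ and $<\gamma$-iterable whenever $\gamma$ is a limit. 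Clauses one and two are then instances of this, and the ``$+1$''/limit gap in the conclusion reflects that one block of iterability of $j$ is spent in producing $\pi$.

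The iterability half is the copying argument of Proposition~\ref{prop:iterability} run in reverse: from the cofinal $\pi$ with $\pi^{+}(e)=j$ one builds by induction commuting cofinal $\pi_{\nu}\colon\bar M_{\nu}\prec M_{\nu}$ between the $\nu$-th iterates of $e$ and of $j$, with $\pi_{\nu}^{+}(\bar\jmath_{\nu})=j_{\nu}$, using Lemma~\ref{lem:operations}(1) at successor stages, Lemma~\ref{lem:operations}(2) at limit stages, and Lemma~\ref{lem:application} to keep each $\pi_{\nu}$ fully elementary (all iterates have height of cofinality $\omega$, just as there). Well-foundedness of the $M_{\nu}$ up to the stage provided by the iterability of $j$ forces well-foundedness of the $\bar M_{\nu}$, and reading off the iteration of $e$ yields the claimed iterability of $e$.

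Producing $\pi$ with $\lambda'<\lambda$, in the absence of any $\Sigma^{1}_{1}$-elementarity of $j$, is the step I expect to be the real obstacle — it is the analogue of Laver's square root (Proposition~\ref{prop:laver}), which is unavailable here. The idea is to use the iteration of $j$ as a surrogate for the missing extender on $V$: since $j\in\mathscr W_{1}(\lambda)$ we have, for every $1\le\delta\le\gamma$, a cofinal $j_{0,\omega\cdot\delta}\colon V_\lambda\prec M_{\omega\cdot\delta}$ into a well-founded model that is taller than $V_\lambda$, satisfies $(V_\lambda)^{M_{\omega\cdot\delta}}=V_\lambda$, has $j_{0,\omega\cdot\delta}(\crt(j))$ equal to the height of $M_{\omega\cdot(\delta-1)}$, and, by Lemma~\ref{lem:application}, is fully elementary from $(V_\lambda,j)$ to $(M_{\omega\cdot\delta},j_{\omega\cdot\delta})$. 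Working inside the tall well-founded model $M_{\omega\cdot\gamma}$ — which correctly computes $V_{\crt(j)}$ and hence every iteration of an embedding sitting below $\crt(j)$ — a Löwenheim–Skolem/condensation argument should produce $\lambda'<\lambda$ and a cofinal $\pi\colon V_{\lambda'}\prec V_\lambda$ with $j\in\ran(\pi^{+})$; pulling this back along the elementarity of $j_{0,\omega\cdot\gamma}$ deposits the configuration below $\crt(j)$, and one more use of the elementarity of $j$ spreads it cofinally in $\lambda$. The delicate point is that a Skolem hull that contains $j$ is forced to contain the whole critical sequence and hence collapses $\lambda$ to itself, so $j$ must be seen only through a bounded, amenable approximation living inside some iterate $M_{\omega\cdot\delta}$, whose collapse then genuinely realises $\pi^{+}(e)$.

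For the third clause, if $j\in\bigcap_{\alpha<\nu}\mathscr W_\alpha(\lambda)$ then $j\in\mathscr W_{\alpha+1}(\lambda)$ for each $\alpha<\nu$, so the descent lemma makes $D_\alpha=\{\lambda'<\crt(j):\mathscr W_\alpha(\lambda')\neq\emptyset\}$ unbounded in $\crt(j)$; as every witnessing embedding has critical sequence of type $\omega$, $D_\alpha$ consists of ordinals of cofinality $\omega$, and the cofinal form of the descent lemma together with the amenability lemmas gives that $D_\alpha$ is $\omega$-club in $\crt(j)$. Since $\crt(j)$ is regular, $\nu\le\omega_1<\crt(j)$, and the $D_\alpha$ decrease, $\bigcap_{\alpha<\nu}D_\alpha$ is $\omega$-club (for $\nu=\omega_1$ one uses that each $D_\alpha$ is the trace on the cofinality-$\omega$ points of a genuine club, so that the club parts intersect to a club), hence nonempty; this yields a single $\lambda'<\crt(j)<\lambda$ with $\mathscr W_\alpha(\lambda')\neq\emptyset$ for all $\alpha<\nu$, and running the whole argument from an arbitrary $\eta<\lambda$ gives the cofinal version here too.
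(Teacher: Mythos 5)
Your plan hinges on a ``descent lemma'' asserting that from $j\in\mathscr{W}_\gamma(\lambda)$ alone one can produce $\lambda'<\lambda$, a cofinal $\pi\colon V_{\lambda'}\prec V_\lambda$ and an $e$ with $\pi^{+}(e)=j$. That is exactly the square-root property of Proposition~\ref{prop:laver}(2), whose proof uses $\Sigma^1_1$-elementarity, i.e.\ $\Itwo$, and you do not actually supply a substitute for it: the L\"owenheim--Skolem argument inside $M_{\omega\cdot\gamma}$ runs into the obstacle you yourself name --- any hull that determines $j$ under the transitive collapse must contain the critical sequence cofinally, so $\lambda$ collapses to itself --- and the remark that $j$ ``must be seen only through a bounded amenable approximation'' is precisely the missing content, not an argument for it. (Note also that if the lemma were available, the copying machinery would give $e$ the \emph{same} iterability as $j$, not one block less, so the shape of your conclusion already signals that this is not the right reduction.) The paper avoids factoring $j$ altogether: it works inside the well-founded iterate $M_{\omega\cdot\gamma}$, where $\lambda=j_{0,\omega}(\kappa_0)$ is regular and $V_\lambda$ is an element, and builds a descriptive-set-theoretic tree of finite approximations to an embedding $e\colon V_{\lambda'}\prec V_{\lambda'}$ \emph{together with} commuting families of maps sending the finite iterates of $e$ into initial segments of the various $M_{\omega\cdot\delta}$. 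The restrictions of $j$ and of the $j_{m,\omega\cdot\delta}$ to the $V_{\kappa_n}$ give a branch in $V$; absoluteness of well-foundedness gives a branch in $M_{\omega\cdot\gamma}$; regularity of $\lambda$ there bounds the resulting $\lambda'$ below $\lambda$; and the limits of the commuting maps embed the iterated direct limits of $e$ into well-founded sets, which is what yields $\alpha$-iterability of $e$. No analogue of this tree appears in your proposal, and without it the first two clauses remain unproved.

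Your third clause has its own gap: you assert that $D_\alpha$ is $\omega$-club in $\crt(j)$, but the set of $\lambda'$ carrying an $\alpha$-iterable embedding has no evident closure under $\omega$-suprema (an increasing union of such $\lambda'$ need not carry any embedding at all), so the intersection argument, in particular for $\nu=\omega_1$, does not go through as stated. The paper argues differently: it shows that $E_\alpha=\{\lambda'<\lambda:\mathscr{W}_\alpha(\lambda')\neq\emptyset\}$ is definable in $V_\lambda$ from $\alpha$ (because the iterates of an embedding living in some $V_{\kappa_n}$ have the same cardinality as its domain, so $V_\lambda$ computes iterability of its small embeddings correctly), hence $j^{+}(E_\alpha)=E_\alpha$, which concentrates the first $\crt(j)$ elements of each $E_\alpha$ below $\crt(j)$ and lets the regularity of $\crt(j)$ produce a single $\lambda'$ working for all $\alpha<\nu$.
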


The hierarchy just above \( \Ithree \) will therefore look like this, where every vertical arrow is a cofinal strong implication:

\begin{figure}
 \centering
\begin{tikzpicture} \label{lasthierarchy}
 \node (dots){$\vdots$};
 \node (omegaplusone)[below of = dots]{$\mathscr{W} _{\omega+1} (  \lambda  ) \neq \emptyset $}
	edge[<-](dots);
 \node (omegaplus)[below of = omegaplusone]{$\mathscr{W} _\omega (  \lambda  ) \neq \emptyset $}
 edge[<-](omegaplusone);
 \node (omega)[below of = omegaplus]{$ \bigcap_n \mathscr{W} _n (  \lambda  ) \neq \emptyset $}
 edge[<-](omegaplus);
 \node (almostomega)[below of = omega]{$\forall n\in\omega\ \mathscr{W} _n (  \lambda  ) \neq \emptyset $}
 edge[<-](omega);
 \node (dots2)[below of = almostomega]{$\vdots$}
 edge[<-](almostomega);
  \node (w2)[below of = dots2]{$\mathscr{W} _2 (  \lambda  ) \neq \emptyset $}
 edge[<-](dots2);
   \node (w1)[below of = w2]{$\mathscr{W} _1 (  \lambda  ) \neq \emptyset $}
 edge[<-](w2);
  \node (w0)[below of = w1]{$\mathscr{W} _0 (  \lambda  ) = \mathscr{E} ( \lambda ) \neq \emptyset $}
 edge[<-](w1);
\end{tikzpicture}
 \caption{}
 \label{fig:lasthierarchy}
\end{figure}


\begin{proof}
We prove it gradually, as going-up the iterability hierarchy will introduce more and more problems. 
As usual, if $j \colon V_\lambda\prec V_\lambda$, then $\langle \kappa_n : n\in\omega\rangle$ is the critical sequence. 
When it exists, we call $\lambda_\alpha$ the height of $M_\alpha$, i.e., $M_\alpha\cap \Ord$. 
Therefore $\lambda_0=\lambda$, $\lambda_\omega=M_\omega\cap \Ord$ and so on.
At the same time, we define $\kappa_\alpha=j_{0,\alpha}(\kappa_0)$. 
These two sequences overlap often, as $\kappa_{\alpha+\omega}=\lambda_\alpha$ for any $\alpha$ limit, but there is a slight difference at limit of limit stages: the $\kappa_\alpha$ sequence is in fact continuous at limit points, so for example $\kappa_{\omega\cdot\omega}=\sup_{n\in\omega}\kappa_{\omega\cdot n}$, while the $\lambda_\alpha$ sequence is discontinuous, for example $\lambda_{\omega\cdot\omega}>\sup_{n\in\omega}\lambda_{\omega\cdot n}=\kappa_{\omega\cdot\omega}$, as $\kappa_{\omega\cdot\omega}\in M_{\omega\cdot\omega}$. 
In a certain sense, the $\kappa_\alpha$ sequence is finer than the $\lambda_\alpha$ sequence and continuously completes it.

\begin{claim}
$\Ithree_1$ cofinally strongly implies $\Ithree$, i.e., for any \( 1 \)-iterable $j \colon V_\lambda\prec V_\lambda$ there exists $\lambda'<\lambda$ and $k \colon V_{\lambda'}\prec V_{\lambda'}$, and for any $\eta<\lambda$ we can find such $\lambda'$ to be larger than $\eta$.
\end{claim}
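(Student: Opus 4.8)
The plan is to exploit the fact that a $1$-iterable $j$ lives inside its own first limit model $M_\omega$, together with the reflection-into-a-countable-hull technique already used in Proposition~\ref{prop:iterability}. First I would recall what $1$-iterability buys us: $M_\omega$ is well-founded, $j_{0,\omega}(\kappa_0)=\lambda$, and $V_\lambda\subseteq M_\omega$ with $(V_\lambda)^{M_\omega}=V_\lambda$, so in particular $V_\lambda\in M_\omega$ and the embedding $j_\omega=j_{0,\omega}^+(j)\colon M_\omega\prec M_\omega$ is defined. The key structural observation is that $j_\omega\restriction V_\lambda$ is an element of $M_\omega$ which $M_\omega$ sees as an elementary embedding $V_\lambda\prec V_\lambda$ (being an elementary embedding of $V_\lambda$ is absolute between $M_\omega$ and $V$, as it is $\Delta^1_1$ over $V_\lambda$ and $V_\lambda$ is computed correctly in $M_\omega$). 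Hence $M_\omega\vDash\Ithree(\lambda)$, i.e. $M_\omega\vDash\exists\,\ell\colon V_\lambda\prec V_\lambda$.

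Next I would push this statement down below $\lambda$. Working in $M_\omega$, apply $j_\omega$ (or rather its inverse-image reasoning): since $\crt(j_\omega)=\kappa_\omega=\lambda$ — more precisely $j_\omega$ has critical point $\lambda$ and $j_\omega(\lambda)=\lambda_\omega=j_{0,\omega}(\lambda)$ — the statement ``$\Ithree(\lambda)$'' is true in $M_\omega$ below the critical point of an elementary embedding $j_\omega$ of $M_\omega$ into itself; but that is not quite enough on its own, so instead I would take a Skolem hull. Fix $\eta<\lambda$. Inside $V$ (which contains $M_\omega$ as a set, by $1$-iterability), take a countable elementary substructure $\mathcal{P}\prec (V_\alpha)$ for $\alpha$ large, containing $j$, $V_\lambda$, $M_\omega$, the sequence $\langle\kappa_n:n\in\omega\rangle$, $\eta$ and a witness $\ell=j_\omega\restriction V_\lambda$ to $\Ithree(\lambda)$ holding in $M_\omega$. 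Let $\pi\colon\mathcal{P}\to V_\alpha$ be the inverse collapse, $\bar\lambda=\pi^{-1}(\lambda)$, $\bar\ell=\pi^{-1}(\ell)$. Then $\bar\ell\colon V_{\bar\lambda}\prec V_{\bar\lambda}$ inside the collapse of $M_\omega$, hence — again by absoluteness of elementarity of $V_{\bar\lambda}$, and the fact that $\mathcal{P}$ is transitive below $\crt(\pi)$ — genuinely $\bar\ell\colon V_{\bar\lambda}\prec V_{\bar\lambda}$ in $V$, with $\bar\lambda<\lambda$. The cofinality clause is handled exactly as in Proposition~\ref{prop:iterability}: since we may demand $\crt(\pi)>\eta$ and $\eta\in\ran(\pi)$ with $\pi(\eta)=\eta$, and $\eta<\lambda$, elementarity forces $\bar\lambda=\pi^{-1}(\lambda)>\eta$. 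So $\lambda'=\bar\lambda$ and $k=\bar\ell$ work, and $\eta<\lambda'<\lambda$.

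The step I expect to be the main obstacle is verifying that $M_\omega$ genuinely satisfies $\Ithree(\lambda)$, i.e. that $j_\omega\restriction V_\lambda$ is correctly recognized by $M_\omega$ as a total elementary self-embedding of $V_\lambda$ with the right critical sequence. This requires two things: first, that $(V_\lambda)^{M_\omega}=V_\lambda$ — which follows from $V_\lambda\subseteq M_\omega$ plus transitivity, as noted in the excerpt — and second, that $j_\omega\restriction V_\lambda\in M_\omega$ and that $M_\omega$ computes its elementarity correctly; here one uses Lemma~\ref{lem:application} applied to $j_{0,\omega}\colon V_\lambda\prec M_\omega$ (whose range has cofinality $\omega$ via the $\kappa_n$'s), so that $j_\omega=j_{0,\omega}^+(j)$ is fully elementary and, being defined amenably from $j$, its restriction to $V_\lambda$ is an element of $M_\omega$ that $M_\omega$ sees as an $\Ithree$-witness for $\lambda=j_{0,\omega}(\kappa_0)$. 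One must also check that the supremum of the critical sequence of $j_\omega\restriction V_\lambda$, as computed in $M_\omega$, really is $\lambda$ — which is immediate since $\crt(j_\omega)=\lambda$ and $j_\omega(\lambda)=\lambda_\omega>\lambda$ means we should instead use that $V_\lambda\in M_\omega$ lets $M_\omega$ talk about self-embeddings of $V_\lambda$ as a set, sidestepping any issue about which ordinal is the sup of the critical sequence. Once $M_\omega\vDash\Ithree(\lambda)$ is secured, the countable-hull reflection is routine.
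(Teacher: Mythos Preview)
Your proof has a genuine gap at its core. The proposed witness $\ell = j_\omega \restriction V_\lambda$ is the \emph{identity} on $V_\lambda$: since $\crt(j_\omega) = j_{0,\omega}(\kappa_0) = \lambda$, the map $j_\omega$ fixes $V_\lambda$ pointwise. You notice this yourself in the last paragraph but then try to ``sidestep'' it by saying that $M_\omega$ can talk about self-embeddings of $V_\lambda$ as a set --- yet you never exhibit any nontrivial such embedding lying in $M_\omega$. In fact there is no evident reason why $j$, or any nontrivial $k\colon V_\lambda \prec V_\lambda$, should be an \emph{element} of $M_\omega$; the restrictions $j\restriction V_{\kappa_n}$ are in $M_\omega$, but $j$ itself is a proper class of $V_\lambda$ and the direct-limit construction does not obviously capture it. So the assertion $M_\omega \vDash \Ithree(\lambda)$ is simply not established.

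Even granting that step, the Skolem-hull reflection is wrong. If $\mathcal{P}$ is countable, the transitive collapse of $V_\lambda \cap \mathcal{P}$ is a countable transitive set, not $V_{\bar\lambda}$ (which is uncountable once $\bar\lambda > \omega$). So $\bar\ell$ would be an elementary self-map of some countable set that $\mathcal{P}$ merely \emph{thinks} is $V_{\bar\lambda}$, not of the genuine $V_{\bar\lambda}$ in $V$. And the cofinality clause also breaks: demanding $\crt(\pi) > \eta$ is impossible for $\eta \geq \omega_1$ when $\mathcal{P}$ is countable. The paper's argument avoids all of this by working entirely inside $M_\omega$: it builds a tree $T_1 \in M_\omega$ of finite approximations $e^i\colon V_{\gamma_i} \prec V_{\gamma_{i+1}}$, observes that $\langle \kappa_0, (j\restriction V_{\kappa_0}, \kappa_1), \dots \rangle$ is a branch in $V$, invokes absoluteness of well-foundedness to get a branch in $M_\omega$, and then uses that $\lambda = j_{0,\omega}(\kappa_0)$ is \emph{regular in $M_\omega$} to conclude that the branch's supremum $\gamma_\omega$ is strictly below $\lambda$. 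Cofinality is then obtained by adding the constraint $\gamma_0 > \eta$ to the tree and using $j_n$ in place of $j$.
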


\begin{proof}
Suppose there exists $j \colon V_\lambda\prec V_\lambda$ that is 1-iterable and let $M_\omega$ be the $\omega$-th iterated model, which is well-founded by assumption. 
Note that $j_{0,\omega}(\kappa_0)=\lambda$, therefore $\lambda$ is regular in $M_\omega$.

As $M_\omega$ is well-founded we build in $M_\omega$ a descriptive set-theoretic tree of approximations of an \( \Ithree \)-embedding. 
So let $T_1$ be defined as:
\begin{equation*}
 T_1 = \{ \langle \gamma_0 , ( e^0 , \gamma_1 ) , \dots , ( e^n , \gamma_{n+1} ) \rangle : \forall i< n \ e^i \subseteq e^{i+1} , \ \forall i \leq n \ e^i \colon V_{\gamma_i}\prec V_{\gamma_{i+1}} \}, 
\end{equation*}
as defined in $M_\omega$. 
Note that $T_1$ is a tree on $V_\lambda$ and $V_\lambda\in M_\omega$, so $T_1\in M_\omega$. 
Now, for any $n\in\omega$, $j\upharpoonright V_{\kappa_n}\in V_{\kappa_{n+1}}\subseteq V_\lambda\subseteq M_\omega$, therefore $\langle \kappa_0, (j\upharpoonright V_{\kappa_0},\kappa_1),\dots\rangle$ is a branch of $T_1$ in $V$. 
By absoluteness of well-foundedness there is therefore a branch of $T_1$ in $M_\omega$. 
Let $\langle ( e^n,\gamma_{n+1}) : n\in\omega \rangle$ together with $\gamma_0$ be a branch of $T_1$ in $M_\omega$. 
Let $\gamma_\omega=\sup_{n\in\omega}\gamma_n$ and $e=\bigcup_{n\in\omega}e^n$. 
Then $e \colon V_{\gamma_\omega}\prec V_{\gamma_\omega}$, and $\gamma_\omega<\lambda$, since $\lambda$ is regular in $M_\omega$. 
So $M_\omega\vDash \exists\lambda'<\lambda\ \exists e \colon V_{\lambda'}\prec V_{\lambda'}$, but then this is true also in $V$. 
This proved that 1-iterability strongly implies $\Ithree$.

Let now $\eta<\lambda$, and let $n\in\omega$ such that $\kappa_n>\eta$. 
Define a revised version of $T_1$, adding the condition that $\gamma_0>\eta$. 
Then the sequence $\langle \kappa_n, (j_n\upharpoonright V_{\kappa_n},\kappa_{n+1}),\dots\rangle$ is a branch of the revised $T_1$, so there is a branch also in $V$, and the $\gamma_\omega$ defined by the branch will be such that $\lambda'=\gamma_\omega>\gamma_0>\eta$.
\end{proof}

\begin{claim}
 $\Ithree_2$ cofinally strongly implies $\Ithree_1$, i.e., for any \( 2 \)-iterable $j \colon V_\lambda\prec V_\lambda$ there exists $\lambda'<\lambda$ and a \( 1 \)-iterable $k \colon V_{\lambda'}\prec V_{\lambda'}$, and for any $\eta<\lambda$ we can find such $\lambda'$ to be larger than $\eta$.
\end{claim}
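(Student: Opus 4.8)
The plan is to run the argument of the previous Claim one level higher. Since $j$ is $2$-iterable the model $M_{\omega\cdot 2}$ is well-founded, and since $j$ is in particular $\ge 2$-iterable we have $M_\omega\in M_{\omega\cdot 2}$ (and, as for $M_\omega$, one checks $(V_\lambda)^{M_{\omega\cdot 2}}=V_\lambda$). First I would record that $\lambda$ is regular in $M_{\omega\cdot 2}$: the embedding $j_{\omega,\omega\cdot 2}\colon M_\omega\prec M_{\omega\cdot 2}$ has critical point $j_{0,\omega}(\kappa_0)=\lambda$, so a cofinal map $\delta\to\lambda$ with $\delta<\lambda$ lying in $M_{\omega\cdot 2}$ would be pointwise fixed by $j_{\omega,\omega\cdot 2}$ and hence already lie in $M_\omega$, contradicting that $\lambda=\crt(j_\omega)$ is regular (indeed measurable) in $M_\omega$.

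Next I would build, inside $M_\omega$ (so that it lies in $M_{\omega\cdot 2}$), a tree $T_2$ obtained by augmenting the tree $T_1$ of the previous Claim. A node of $T_2$ carries, besides the $T_1$-data $\langle\gamma_0,(e^0,\gamma_1),\dots,(e^n,\gamma_{n+1})\rangle$ (with $\gamma_i<\lambda$, $e^i\colon V_{\gamma_i}\prec V_{\gamma_{i+1}}$, $e^i\subseteq e^{i+1}$, whose branches give $e=\bigcup_ie^i\colon V_{\gamma_\omega}\prec V_{\gamma_\omega}$), two further families of $\subseteq$-increasing approximations: finite pieces $e^i_m$ ($m\le i$) destined to build the iterates of $e$ (the defining clauses being chosen so that the unions $e_m:=\bigcup_ie^i_m$ satisfy $e_0=e$ and $e_{m+1}=e^+(e_m)$), and finite pieces $\rho^i$ of an order-preserving map $\rho$ from the direct limit $\bar M^e_\omega:=\varinjlim(V_{\gamma_\omega},e_{n,m})$ into the ordinals. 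Thus a branch of $T_2$ delivers an $\Ithree$-embedding $e$ on some $V_{\gamma_\omega}$ \emph{together with a certificate} $\rho=\bigcup_i\rho^i$ that $\bar M^e_\omega$ is well-founded, i.e.\ an embedding that is $1$-iterable.

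Then I would run the absoluteness argument as before. The tree $T_2$ is a set (a subset of $(M_\omega)^{<\omega}$) belonging to $M_{\omega\cdot 2}$, and it has an infinite branch in $V$: take the $T_1$-branch coming from $j$ (so $e=j$), take $e^i_m$ to be the corresponding restrictions of the iterates $j_m$ (so $e_m=j_m$ and hence $\bar M^e_\omega=M_\omega$, which is well-founded because $j$ is $1$-iterable), and take $\rho^i$ to be the restriction of the rank function of $M_\omega$ to the part of $M_\omega$ represented at stage $i$; all these pieces belong to $M_\omega$, even though $\rho$ takes values up to $\lambda_\omega>\lambda$ — this is precisely why $T_2$ is built on $M_\omega$ rather than on $V_\lambda$. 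Since well-foundedness of the set relation $T_2$ is absolute between $V$ and $M_{\omega\cdot 2}$, the tree $T_2$ is ill-founded in $M_{\omega\cdot 2}$; as $M_{\omega\cdot 2}\vDash\DC$ there is a branch $b\in M_{\omega\cdot 2}$. Reading off $b$ gives $e$ and $\lambda':=\gamma_\omega=\sup_i\gamma_i<\lambda$ by regularity of $\lambda$ in $M_{\omega\cdot 2}$. Because $\gamma_\omega<\lambda$ and $(V_\lambda)^{M_{\omega\cdot 2}}=V_\lambda$, the objects $V_{\gamma_\omega}$, the $e_m$, and $\bar M^e_\omega$ are genuine and computed the same in $M_{\omega\cdot 2}$ and in $V$; the branch certifies $\bar M^e_\omega$ well-founded in $M_{\omega\cdot 2}$, hence — well-foundedness of a set relation being absolute and $M_{\omega\cdot 2}$ being genuinely well-founded — really well-founded, so $e\colon V_{\lambda'}\prec V_{\lambda'}$ is genuinely $1$-iterable. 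For the cofinal strengthening, add the clause $\gamma_0>\eta$ to the definition of $T_2$ and, as in the previous Claim, start the $V$-branch high enough in the critical sequence of $j$, yielding $\eta<\lambda'<\lambda$.

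The step I expect to be the crux is the design of $T_2$ — encoding the $\boldsymbol{\Pi}^1_1$ property ``the $\omega$th iterate is well-founded'' by a tree that simultaneously lies in $M_{\omega\cdot 2}$ and possesses a branch in $V$. The obstruction is that the only witness available in $V$ to the $1$-iterability of the canonical branch-embedding $j$ is the model $M_\omega$, which is strictly taller than $V_\lambda$; this forces the ranking $\rho$ to use ordinals beyond $\lambda$ and hence forces $T_2$ to be a tree on $M_\omega$ rather than on $V_\lambda$ — legitimate precisely because the $2$-iterability of $j$ puts $M_\omega$ inside $M_{\omega\cdot 2}$. All the remaining ingredients (elementarity of the union $\bigcup_ie^i$, the absoluteness transfers, the use of $\DC$ inside $M_{\omega\cdot 2}$, and genuineness of the extracted objects from $\gamma_\omega<\lambda$) are exactly as in the previous Claim.
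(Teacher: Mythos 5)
Your argument is correct and shares the paper's overall architecture --- a tree built on $M_\omega$ inside the well-founded $M_{\omega\cdot 2}$, a canonical branch in $V$ extracted from $j$ and its $\omega$-th iterate, absoluteness of well-foundedness to pull a branch back into $M_{\omega\cdot 2}$, regularity of $\lambda$ in $M_{\omega\cdot 2}$ to get $\gamma_\omega<\lambda$, and the clause $\gamma_0>\eta$ for cofinality --- but it differs in the one component that matters: the certificate of $1$-iterability carried along the branch. You certify well-foundedness of the direct limit $\bar M^e_\omega$ by threading through the tree finite pieces of a ranking function $\rho$ into the ordinals of $M_\omega$ (a Shoenfield-style witness, instantiated on the $V$-side by the rank function of $M_\omega$), whereas the paper instead threads through finite pieces of a \emph{commuting system of elementary embeddings} $k^l_m\colon V_{\gamma_l}\prec (V_{\eta_{l-m}})^{M_\omega}$ with $k^{l+1}_{m+1}\circ e^l_m=k^l_m$, so that the direct limit yields $k_\omega\colon N_\omega\prec (V_{\eta_\omega})^{M_\omega}$ and well-foundedness of $N_\omega$ follows by elementarity into a well-founded set. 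For the present claim both certificates work and yours is arguably more elementary; the payoff of the paper's choice only shows up at the next step ($\Ithree_3\Rightarrow\Ithree_2$), where one needs the target $(V_{\eta_\omega})^{M_\omega}$ to itself carry a $1$-iterable embedding $h$ with $k_m^+(e_m)=h$, something an order-preserving map into the ordinals cannot transport --- so be aware that your variant would need to be abandoned or substantially reworked to climb further. One small repair: your justification that $\lambda$ is regular in $M_{\omega\cdot 2}$ via ``a cofinal $f\colon\delta\to\lambda$ would be pointwise fixed by $j_{\omega,\omega\cdot 2}$ and hence lie in $M_\omega$'' is circular as written ($j_{\omega,\omega\cdot 2}(f)$ is only defined if $f\in M_\omega$ already); the correct argument is simply that $M_\omega=(V_{\lambda_\omega})^{M_{\omega\cdot 2}}$ is an initial segment of $M_{\omega\cdot 2}$ containing $(V_{\lambda+\omega})^{M_{\omega\cdot 2}}$, so any such $f\in M_{\omega\cdot 2}$ would already belong to $M_\omega$, contradicting regularity of $\lambda=j_{0,\omega}(\kappa_0)$ there.
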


\begin{proof}
Suppose now that $j \colon V_\lambda\prec V_\lambda$ is 2-iterable, so $M_{\omega+\omega}$ is well-founded. 
Again, we are going to define a tree $T_2$ whose branches are going to bring us $\Ithree$-embeddings. 
But we want more, since we want such embeddings to be 1-iterable. 
The solution is to build in $T_2$ at the same time a family of embeddings $\langle k_m : m\in\omega\rangle$ that commutes with the iterates of $e$, so that the $\omega$-limit of such family will witness that the $\omega$-limit of $e$ is going to be well-founded.
	
Let $\langle\gamma_0,(e^0,\gamma_1),\dots,(e^n,\gamma_{n+1}),\dots\rangle$ be a branch of $T_1$, in other words a sequence of approximations of an $\Ithree$-embedding $e$. 
We can define then approximations also for the iterates of $e$: for example 
\begin{equation*}
e_1\upharpoonright V_{\gamma_1}=e_1^1=e^1(e^0),\ e_1\upharpoonright V_{\gamma_2}=e^2_1=e^2(e^1),\ e_2\upharpoonright V_{\gamma_2}=e_2^2=e^2_1(e^1_1), 
\end{equation*}
where the subscript indicates the iteration number, the superscript the level up to which the iterate is approximate. 
Of course, $e^0_1$ and $e^1_2$ are the identity. 
In general, we define $e^{n+m+1}_{m+1}=e^{n+m+1}_m(e^{n+m}_m)$ for any $n,m\in\omega$, where $e^n_0=e^n$. 
So if we know $e$ up to $V_{\gamma_n}$, then we know any finite iterate up to $V_{\gamma_n}$.
	
Working in $M_{\omega+\omega}$ we define a tree $T_2$ on the set $M_\omega\in M_{\omega+\omega}$ in the following way:
\begin{enumerate}
\item the nodes of $T_2$ are of the form 
\begin{equation*}
\langle ( \gamma_0,\eta_0 ) , ( e^0,k_0^0,\gamma_1,\eta_1),(e^1,k_0^1,k_1^1,\gamma_2,\eta_2),\dots,(e^n,k_0^n,\dots,k_n^n,\gamma_{n+1},\eta_{n+1})\rangle;
\end{equation*}

\item $e^0\subseteq e^1\subseteq\dots\subseteq e^n$, i.e., $\forall i<n\ e^i\subseteq e^{i+1}$; 

\item $e^i \colon V_{\gamma_i}\prec V_{\gamma_{i+1}}$ for any $i\leq n$;

\item note that $k_m^l$ is defined only for $m\leq l\leq n$, and we want $k^m_m\subseteq k^{m+1}_m\subseteq\dots\subseteq k^n_m$ for any $m\leq n$, i.e. $\forall i<n,\ \forall l<n-i\ k^{i+l}_i\subseteq k^{i+l+1}_i$;

\item for any $m\leq l\leq n$, $k_m^l \colon V_{\gamma_l}\prec (V_{\eta_{l-m}})^{M_\omega}$;

\item for any $m\leq l<n$, $k^{l+1}_{m+1}\circ e^l_m=k^l_m$.
\end{enumerate}
	
Consider now
\begin{equation*}
\langle (\kappa_0,\kappa_\omega), (j\upharpoonright V_{\kappa_0},j_{0,\omega}\upharpoonright V_{\kappa_0},\kappa_1,\kappa_{\omega+1}), (j\upharpoonright V_{\kappa_1}, j_{0,\omega}\upharpoonright V_{\kappa_1}, j_{1,\omega}\upharpoonright V_{\kappa_1}, \kappa_2, \kappa_{\omega+2}), \dots \rangle.
\end{equation*}
We want to prove that this sequence is a branch of $T_2$. 
We need to prove that every element of the sequence is in $M_\omega$. 
Clearly $j\upharpoonright V_{\kappa_n}\in V_\lambda\in M_\omega$ for any $n\in\omega$. 
Moreover, also $j_{m,l}\upharpoonright V_{\kappa_n}\in V_\lambda$ for any $m\leq l\in\omega$ and $n\in\omega$. 
But then for any $m\leq n\in\omega$
\begin{equation*}
j_{m,\omega}\upharpoonright V_{\kappa_n}=j_{n+1,\omega}\circ j_{m,n+1}\upharpoonright V_{\kappa_n}=j_{n+1,\omega}(j_{m,n+1}\upharpoonright V_{\kappa_n})\in M_\omega
\end{equation*}
because $\crt(j_{n+1,\omega})=\kappa_{n+1}>\kappa_n$. 
Points (2), (3), (4) are immediate. 
For point (5), 
\begin{equation*}
j_{m,\omega}(\kappa_n)=j_{m,\omega}(j_{0,m}(\kappa_{n-m}))=j_{0,\omega}(\kappa_{n-m})=\kappa_{\omega+n-m}\quad\text{for any}\quad m\leq n\in\omega, 
\end{equation*}
so by elementarity $j_{m,\omega}\upharpoonright V_{\kappa_n} \colon V_{\kappa_n}\prec (V_{\kappa_{\omega+n-m}})^{M_\omega}$ for any $m\leq n\in\omega$. 
Finally, for point (6), notice that the iterate $j_m$ is $j_{m,m+1}$ for any $m\in\omega$, so if $x\in V_{\kappa_n}$ for some $n\leq m$, $n\in\omega$, $j_{m+1,\omega}(j_m(x))=j_{m,\omega}(x)$, and since $j_m\upharpoonright V_{\kappa_n} \colon V_{\kappa_n}\prec V_{\kappa_{n+1}}$, we have that 
\begin{equation*}
j_{m+1,\omega}\upharpoonright V_{\kappa_n+1}\circ j_m\upharpoonright V_{\kappa_n}=j_{m+1,\omega}\circ j_m\upharpoonright V_{\kappa_n}=j_{m,\omega}\upharpoonright V_{\kappa_n}.
\end{equation*}
So $T_2$ has a branch in $V$, and therefore it has a branch in $M_{\omega+\omega}$.
Consider such a branch, and let $\gamma_\omega=\sup_{n\in\omega}\gamma_n$, $\eta_\omega=\sup_{n\in\omega}\eta_n$, $e=\bigcup_{n\in\omega} e^n$ and $k_m=\bigcup_{n\in\omega}k^n_m$ for any $m\in\omega$. 
Then, as before, $e \colon V_{\gamma_\omega}\prec V_{\gamma_\omega}$ and $k_m \colon V_{\gamma_\omega}\prec (V_{\eta_\omega})^{M_\omega}$ for any $m\in\omega$. 
Note that $M_\omega=(V_{\lambda_\omega})^{M_{\omega+\omega}}$, therefore $\lambda=\kappa_\omega$ is regular also in $M_{\omega+\omega}$ and $\gamma_\omega<\lambda$. 
Also $\lambda_\omega=\kappa_{\omega+\omega}$ is regular in $M_{\omega+\omega}$, so $\eta_\omega<\lambda_\omega$. 
Consider now $N_\omega$ the $\omega$-iterated model of $e$. 
The picture is the following:
\[
\begin{tikzpicture}
 \node (M0) {$V_{\gamma_\omega}$};
 \node (M1)[right of = M0, xshift=3mm] {$V_{\gamma_\omega}$}
 edge [<-] node[below]{$e$}(M0);
 \node (dots)[right of = M1] {$\dots$}
 edge [<-] (M1);
 \node (Mn)[right of = dots, xshift=1cm] {$V_{\gamma_\omega}$}
 edge [<-] (dots);
 \node (Mn1)[right of = Mn, xshift=3mm] {$V_{\gamma_\omega}$}
 edge [<-] node[below]{$e_m$}(Mn);
 \node (dots2)[right of = Mn1] {$\dots$}
 edge [<-] (Mn1);
 \node (Nomega)[right of = dots2] {$N_\omega$};
 \node (upper)[above of = Nomega, yshift=1cm] {$(V_{\eta_\omega})^{M_\omega}$}
 edge [<-] node[right]{$k_\omega$}(Nomega)
	 edge [<-] (M0)
	 edge [<-] (M1)
	 edge [<-] node[left]{$k_m$}(Mn)
	 edge [<-] node[below right]{$k_{m+1}$}(Mn1);
\end{tikzpicture} 
\]	
By the properties of the direct limit, as the family of $k_m$ commutes with the iterates of $e$, there exists a $k_\omega \colon N_\omega\prec (V_{\eta_\omega})^{M_\omega}$. 
Then, by elementarity and well-foundedness of $M_\omega$, also $N_\omega$ is well-founded, and $e$ is a 1-iterable embedding below $\lambda$. 
Note that it is not necessary that $\gamma_\omega=\eta_0$, like in the branch in $V$.
	
Again, for any $\eta<\kappa_n<\lambda$ we can add to the definition of $T_2$ the condition $\gamma_0>\eta$, and the branch generated by $j_n$ instead of $j$ will make the proof work, so that we will find a 1-iterable $e \colon V_{\lambda'}\prec V_{\lambda'}$ with $\eta<\gamma_0<\gamma_\omega=\lambda'$.
\end{proof}
	
\begin{claim}
$\Ithree_3$ cofinally strongly implies $\Ithree_2$.
\end{claim}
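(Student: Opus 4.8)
The plan is to run the proof of the previous claim one block higher. Assume $j\colon V_\lambda\prec V_\lambda$ is $3$-iterable, so that $M_{\omega\cdot 3}$ is well-founded, and recall that $M_\omega=(V_{\lambda_\omega})^{M_{\omega\cdot 3}}$ and $M_{\omega\cdot 2}=(V_{\lambda_{\omega\cdot 2}})^{M_{\omega\cdot 3}}$. Working inside $M_{\omega\cdot 3}$, I would build a tree $T_3$ on the set $M_{\omega\cdot 2}$ whose branches approximate simultaneously: an $\Ithree$-embedding $e\colon V_{\gamma_\omega}\prec V_{\gamma_\omega}$ (where $\gamma_\omega=\sup_n\gamma_n$) together with the derived approximations $e^n_p$ of its finite iterates $e_p$; a first family $\langle k_m:m\in\omega\rangle$ of partial embeddings into rank-initial segments $(V_{\eta_\omega})^{M_\omega}$ of $M_\omega$, commuting with the $e_p$, exactly as the family in the previous claim; and, in addition, a second family $\langle\ell_m:m\in\omega\rangle$ of partial embeddings into rank-initial segments $(V_{\zeta_\omega})^{M_{\omega\cdot 2}}$ of $M_{\omega\cdot 2}$, now commuting with the successor maps $e_{\omega+m}$ of the $\omega$-block of $e$'s iteration, so that the tree imposes $\ell_{m+1}\circ e_{\omega+m}=\ell_m$ on the portion of the ($\omega$-th) iterate $N_\omega$ of $e$ that a node of finite length has already described. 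As in the previous claim, the point of the bookkeeping is that every datum in a node of length $n$ lies in $(V_{\kappa_{\omega+n}})^{M_\omega}\subseteq M_{\omega\cdot 2}$, so that $T_3$ really is a tree on $M_{\omega\cdot 2}$ and hence an element of $M_{\omega\cdot 3}$.

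Next I would check that the iteration of $j$ gives a branch of $T_3$ in $V$: take $\gamma_n=\kappa_n$, $e^n=j\restriction V_{\kappa_n}$, $k^n_m=j_{m,\omega}\restriction V_{\kappa_n}$ as before, and $\ell^n_m=j_{\omega+m,\omega\cdot 2}\restriction(V_{\kappa_{\omega+n}})^{M_\omega}$. Each of these finite restrictions is in $M_{\omega\cdot 2}$ by the same argument as in the previous claim --- for instance $j_{\omega+m,\omega\cdot 2}\restriction(V_{\kappa_{\omega+n}})^{M_\omega}=j_{\omega+n+1,\omega\cdot 2}\bigl(j_{\omega+m,\omega+n+1}\restriction(V_{\kappa_{\omega+n}})^{M_\omega}\bigr)\in M_{\omega\cdot 2}$ because $\crt(j_{\omega+n+1,\omega\cdot 2})=\kappa_{\omega+n+1}>\kappa_{\omega+n}$ --- and the commutation clauses of the two families reduce to the iteration identities $j_{\alpha,\delta}=j_{\beta,\delta}\circ j_{\alpha,\beta}$ and, using $N_{\omega+m}=N_\omega$, to $j_{\omega+m+1,\omega\cdot 2}\circ j_{\omega+m}=j_{\omega+m,\omega\cdot 2}$. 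So $T_3$ has a branch in $V$, hence by absoluteness of well-foundedness a branch in $M_{\omega\cdot 3}$.

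Finally I would read the conclusion off a branch of $T_3$ inside $M_{\omega\cdot 3}$. One gets $e\colon V_{\gamma_\omega}\prec V_{\gamma_\omega}$, and since $\langle\gamma_n:n\in\omega\rangle\in M_{\omega\cdot 3}$ while $\lambda=j_{0,\omega}(\kappa_0)$ is regular in $M_\omega$ and hence in $M_{\omega\cdot 3}$, it follows that $\gamma_\omega<\lambda$. By the universal property of the direct limit the first family yields $k_\omega\colon N_\omega\prec(V_{\eta_\omega})^{M_\omega}$ with $\eta_\omega<\lambda_\omega$ (since $\lambda_\omega=j_{0,\omega\cdot 2}(\kappa_0)$ is regular in $M_{\omega\cdot 2}$, hence in $M_{\omega\cdot 3}$), so $N_\omega$ embeds into a well-founded set and is therefore well-founded; this legitimizes $e_\omega=e_{0,\omega}^+(e)$ and the iteration of $e$ through the successors $N_{\omega+m}=N_\omega$. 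Then the second family, again by the universal property, yields $\ell_\omega\colon N_{\omega\cdot 2}\prec(V_{\zeta_\omega})^{M_{\omega\cdot 2}}$ with $\zeta_\omega<\lambda_{\omega\cdot 2}=j_{0,\omega\cdot 3}(\kappa_0)$, which is regular in $M_{\omega\cdot 3}$; hence $N_{\omega\cdot 2}$ is well-founded, that is, $e$ is $2$-iterable and witnesses $\Ithree_2(\gamma_\omega)$ with $\gamma_\omega<\lambda$. For the cofinal version, given $\eta<\lambda$ I would pick $n$ with $\kappa_n>\eta$, add the clause $\gamma_0>\eta$ to the definition of $T_3$, and run the argument with the branch generated by $j_n$ in place of $j$, so that $\eta<\gamma_0\le\gamma_\omega<\lambda$.

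The step I expect to be the main obstacle is the precise formulation of the second family inside the tree. Its intended limit maps $\ell_m$ have domain $N_\omega$, which is an $\omega$-direct limit not available --- and not even known to be well-founded --- at any finite stage, and they must commute with the maps $e_{\omega+m}$, which are themselves determined only once $e$ and its whole $\omega$-block have been built. The remedy is the device of the previous claim carried one block further: one never names $N_\omega$ directly but records partial maps through the composites of the approximations $e^n_p$ to $e$'s finite iterates, imposing the commutation conditions level by level on those partial maps. Checking that this can be arranged so that no datum at level $n$ escapes $(V_{\kappa_{\omega+n}})^{M_\omega}$ is precisely what places $T_3$ inside the well-founded model $M_{\omega\cdot 3}$, which is the crux of the whole construction.
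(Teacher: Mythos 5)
Your overall skeleton is the paper's: a tree $T_3$ on $M_{\omega\cdot 2}$ defined in $M_{\omega\cdot 3}$, a branch in $V$ read off the iteration of $j$, reflection of the branch into $M_{\omega\cdot 3}$, and boundedness of $\gamma_\omega$, $\eta_\omega$, $\zeta_\omega$ by regularity of $\lambda$, $\lambda_\omega$, $\lambda_{\omega\cdot 2}$; the cofinality refinement is also handled as in the paper. The divergence is at the one step that carries all the content: how the tree certifies well-foundedness of $N_{\omega\cdot 2}$. You propose a second commuting family $\ell_m$ with domain $N_\omega$, target inside $M_{\omega\cdot 2}$, satisfying $\ell_{m+1}\circ e_{\omega+m}=\ell_m$. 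As you yourself flag, this clause cannot be imposed node-by-node: a node of length $n$ determines nothing of $N_\omega$ beyond $V_{\gamma_{n+1}}$ (the part of $N_\omega$ above $V_{\gamma_\omega}$ consists of direct-limit threads available only once all of $e$ is known), and the maps $e_{\omega+m}$ are likewise not approximable at finite stages. This is exactly the obstruction the paper isolates, and the paper's response is \emph{not} to repair the second family but to replace it: the tree additionally builds an embedding $h\colon (V_{\eta_\omega})^{M_\omega}\prec(V_{\eta_\omega})^{M_\omega}$ together with a $T_2$-style family $g_m$ witnessing that $h$ is $1$-iterable, and links the two constructions by the finite-level condition $k^{l+1}_m(e^l_m)=h^{l-m}$. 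In the limit this gives $k_m^+(e_m)=h$ for all $m$, hence $k_\omega^+(e_\omega)=h$, so $k_\omega$ conjugates the second $\omega$-block of $e$'s iteration into the iteration of $h$; then $N_{\omega\cdot 2}$ embeds into the $\omega$-iterate $\bar N_\omega$ of $h$, which is well-founded because $g_\omega$ embeds it into $(V_{\delta_\omega})^{M_{\omega\cdot 2}}$. The decisive point is that all the new data $h^n,g^n_m$ lives on genuine rank-initial segments of the already well-founded models $M_\omega$ and $M_{\omega\cdot 2}$, so the domain problem never arises.

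Your proposed remedy --- never naming $N_\omega$, and instead recording the composites of the $\ell_m$ with the direct-limit maps, i.e.\ approximations to $\ell_m\circ e_{p,\omega}\colon V_{\gamma_\omega}\to M_{\omega\cdot 2}$, with coherence in $p$ (well-definedness on threads) and the commutation rewritten via $e_{\omega+m}(e_{p,\omega}(y))=e_{p,\omega}(e_{p+m}(y))$ --- is a reasonable alternative and I believe it can be pushed through; it would give an $\in$-preserving $\ell_\omega\colon N_{\omega\cdot 2}\to(V_{\zeta_\omega})^{M_{\omega\cdot 2}}$ directly, without the auxiliary $h$. But this doubly-indexed bookkeeping, the verification that the natural branch's data $j_{\omega+m,\omega\cdot 2}\circ j_{p,\omega}\restriction V_{\kappa_n}$ lies in $M_{\omega\cdot 2}$, and the check that the level-$n$ commutation clauses really capture $\ell_{m+1}\circ e_{\omega+m}=\ell_m$ on all of $N_\omega$ constitute essentially the entire proof, and they are left as a gesture. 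As written, your tree definition (imposing the commutation ``on the portion of $N_\omega$ a finite node has described'') is not yet a definition of a tree, so the argument has a gap precisely at its crux; either execute the composite encoding in full, or adopt the paper's $h$-and-$g$ device, which avoids the issue altogether.
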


\begin{proof}
This adds another layer of complexity. 
The tree $T_2$ as calculated in $M_{\omega\cdot 3}$ would give a 1-iterable embedding, but our aim is to build a 2-iterable embedding. 
The strategy of adding more witnesses to well-foundedness (so further $k_{\omega+n}$) cannot work in the same way, as $k_\omega$ is defined on $N_\omega$, and the initial segments of $N_\omega$ are known only when the whole $e$ is known, so it is not possible to build at the same time small approximations of $e$ and $k_\omega$. 
The solution is instead to build a 1-iterable embedding $h$ via $(T_2)^{M_\omega}$, so that $k_\omega^+(e_\omega)=h$, and an iteration argument will show that this is enough to prove that $e$ is 2-iterable.
	
Define $T_3$ in $M_{\omega\cdot 3}$ on $M_{\omega\cdot 2}\in M_{\omega\cdot 3}$ in the following way:
	\begin{enumerate}
	\item the nodes of $T_3$ are of the form 
	\begin{multline*}
	\langle (\gamma_0,\eta_0,\delta_0),(e^0,k_0^0,h^0,g^0_0,\gamma_1,\eta_1,\delta_1),(e^1,k_0^1,k_1^1,h^1,g_0^1,g_1^1,\gamma_2,\eta_2,\delta_2),\dots,\\
	(e^n,k_0^n,\dots,k_n^n,h^n,g_0^n,\dots,g_n^n,\gamma_{n+1},\eta_{n+1},\delta_{n+1})\rangle;
	\end{multline*}

	\item the sequence 
	\begin{equation*}
	\langle(\gamma_0,\eta_0),(e^0,k_0^0,\gamma_1,\eta_1),(e^1,k_0^1,k_1^1,\gamma_2,\eta_2),\dots,(e^n,k_0^n,\dots,k_n^n,\gamma_{n+1},\eta_{n+1})\rangle
	\end{equation*}
	is a node of $T_2$;

	\item the sequence 
	\begin{equation*}
	\langle(\eta_0,\delta_0),(h^0,g_0^0,\eta_1,\delta_1),(h^1,g_0^1,g_1^1,\eta_2,\delta_2),\dots,(h^n,g_0^n,\dots,g_n^n,\eta_{n+1},\delta_{n+1})\rangle
	\end{equation*}
	is a node of $(T_2)^{M_\omega}$, that is defined as $T_2$ but in $M_\omega$ and with all instances of $M_\omega$ replaced by $M_{\omega\cdot 2}$, so for example $h^n \colon (V_{\eta_n})^{M_\omega}\prec(V_{\eta_n})^{M_\omega}$ and $g^l_m \colon (V_{\eta_l})^{M_\omega}\prec(V_{\delta_{l-m}})^{M_{\omega\cdot 2}}$;

	\item for any $m\leq l<n$, $k^{l+1}_m(e^l_m)=h^{l-m}$.
	\end{enumerate}
As before, we can find a branch of $T_3$ in $V$ in the natural way, i.e., assign $\gamma_n=\kappa_n$, $\eta_n=\kappa_{\omega+n}$, $\delta_n=\kappa_{\omega+\omega+n}$, $e^n=j\upharpoonright V_{\kappa_n}$, $k^n_m=j_{m,\omega}\upharpoonright V_{\kappa_n}$, $h^n=j_\omega\upharpoonright (V_{\kappa_{\omega+n}})^{M_\omega}$ and $g^n_m=j_{\omega,\omega+m}\upharpoonright (V_{\kappa_{\omega+n}})^{M_\omega}$. As $j$ is 3-iterable and 
\begin{equation*}
 j_{m,\omega}(j_m) = j_{m,\omega}(j_{0,m}(j))=j_{0,\omega}(j)=j_\omega,
\end{equation*}
everything works. 
	
Consider a branch of $T_3$ in $M_{\omega\cdot 3}$. 
Then, calling $\gamma_\omega=\sup_{n\in\omega}\gamma_n$, $\eta_\omega=\sup_{n\in\omega}\eta_n$, $\delta_\omega=\sup_{n\in\omega}\delta_n$, $e=\bigcup_{n\in\omega}e^n$, $k_m=\bigcup_{n\in\omega}k^n_m$ for any $m\in\omega$, $h=\bigcup_{n\in\omega}h^n$, $g_m=\bigcup_{n\in\omega}g^n_m$ for any $m\in\omega$, by the previous results we have that $e \colon V_{\gamma_\omega}\prec V_{\gamma_\omega}$, $k_m \colon V_{\gamma_\omega}\prec (V_{\eta_\omega})^{M_\omega}$ for any $m\in\omega$, $h \colon (V_{\eta_\omega})^{M_\omega}\prec (V_{\eta_\omega})^{M_\omega}$ and $g_m \colon (V_{\eta_\omega})^{M_{\omega\cdot 2}}\prec (V_{\delta_\omega})^{M_{\omega\cdot 2}}$. 
As before, by the regularity of $\lambda$, $\lambda_\omega$ and $\lambda_{\omega\cdot 2}$ in $M_{\omega\cdot 3}$, $\gamma_\omega<\lambda$, $\eta_\omega<\lambda_\omega$ and $\delta_\omega<\lambda_{\omega\cdot 2}$. 
Moreover, $e$ and $h$ are 1-iterable, and point (4) of the definition of $T_3$ guarantees that $k_m^+(e_m)=h$ for any $m\in\omega$. 
We want to prove that $k_\omega^+(e_\omega)=h$. 
But $e_\omega=e_{m,\omega}^+(e_m)$ for all $m\in\omega$, so $k_{\omega}^+(e_{m,\omega}^+(e_m))$ is, by definition, $k_m^+(e_m)$.

Now, if $e_\omega=h$, then we have that the $\omega$-iterate of $e$ is 1-iterable, and so $e$ is 2-iterable. 
Otherwise this is the picture, where $(V_{\eta_\omega})^{M_\omega}=\bar{N}_0$ and $(V_{\delta_\omega})^{M_{\omega\cdot 2}}=\bar{\bar{N}}_0$:
\[
\begin{tikzpicture}
 \node (M0) {$V_{\gamma_\omega}$};
 \node (M1)[right of = M0] {$V_{\gamma_\omega}$}
 edge [<-] node[below]{$e$}(M0);
 \node (dots)[right of = M1] {$\dots$}
 edge [<-] (M1);
 \node (Mn)[right of = dots] {$V_{\gamma_\omega}$}
 edge [<-] (dots);
 \node (Mn1)[right of = Mn, xshift=3mm] {$V_{\gamma_\omega}$}
 edge [<-] node[below]{$e_m$}(Mn);
 \node (dots2)[right of = Mn1] {$\dots$}
 edge [<-] (Mn1);
 \node (Nomega)[right of = dots2] {$N_\omega$};
 \node (upper)[above of = Nomega, yshift=1cm] {$\bar{N}_0$}
 edge [<-] node[right]{$k_\omega$}(Nomega)
	 edge [<-] (M0)
	 edge [<-] (M1)
	 edge [<-] node[left]{$k_m$}(Mn)
	 edge [<-] node[right]{$k_{m+1}$}(Mn1);
	\node (upper2)[right of = upper, xshift=7mm] {$\bar{N}_0$}
	 edge [<-] node[below]{$h$}(upper);
	\node (dots3)[right of = upper2, xshift=2mm] {$\dots$}
	 edge [<-] (upper2);
	\node (uppern)[right of = dots3, xshift=2mm] {$\bar{N}_0$}
	 edge [<-] (dots3);
	\node (uppern1)[right of = uppern, xshift=7mm] {$\bar{N}_0$}
	 edge [<-] node[below]{$h_m$}(uppern);
	\node (dots4)[right of = uppern1, xshift=2mm] {$\dots$}
	 edge [<-] (uppern1);
	\node (barNomega)[right of = dots4] {$\bar{N}_\omega$};
	\node (upperupper)[above of = barNomega, yshift=1cm] {$\bar{\bar{N}}_0$}
	 edge [<-] node[right]{$g_\omega$}(barNomega)
	 edge [<-] (upper)
	 edge [<-] (upper2)
	 edge [<-] node[left]{$g_m$}(uppern)
	 edge [<-] node[right]{$g_{m+1}$}(uppern1);
	\node (Nomega1)[right of = Nomega, xshift=7mm] {$N_\omega$}
	 edge [<-] node[below]{$e_\omega$}(Nomega)
	 edge [->] node[right]{$k_\omega$}(upper2);
	\node (dots5)[right of = Nomega1] {$\dots$}
	 edge [<-] (Nomega1);
	\node (Nomegan)[below of = uppern, yshift=-1cm] {$N_\omega$}
	 edge [<-] (dots5)
	 edge [->] node[left]{$k_\omega$}(uppern);
	\node (Nomegan1)[below of = uppern1, yshift=-1cm] {$N_\omega$}
	 edge [<-] node[below]{$e_{\omega+m}$}(Nomegan)
	 edge [->] node[left]{$k_\omega$}(uppern1);
	\node (dots6)[right of = Nomegan1] {$\dots$}
	 edge [<-] (Nomegan1);
	\node (Nomegaomega)[below of = barNomega, yshift=-1cm] {$N_{\omega\cdot 2}$}
	 edge [->] node[right]{$k_{\omega\cdot 2}$}(barNomega);
\end{tikzpicture} 
\]
By the usual reasoning, there exists $k_{\omega\cdot 2} \colon N_{\omega\cdot 2}\prec \bar{N}_{\omega}$.
We know that $(V_{\delta_\omega})^{M_{\omega\cdot 2}}$ is well-founded because $j$ is 3-iterable, but then by elementarity of $g_\omega$ we have that $\bar{N}_\omega$ is well-founded, and so by elementarity of $k_{\omega\cdot 2}$ also $N_{\omega\cdot 2}$ is well-founded.

Note that the usual remark on the cofinality of the possible $\lambda'$ under $\lambda$ still holds, with the same proof.
\end{proof}
	
The techniques used for $T_3$ can be used now to define $T_4$, $T_5$, and so on, therefore proving the first part of the theorem for $\alpha$ finite. 
For example this is the diagram generated by a branch of $T_4$.
\[
	\begin{tikzpicture}
	 \node (base) {$V_\lambda$};
	 \node (dots) [right of = base] {$\dots$}
	 edge [<-] node[below]{$e$}(base);
	 \node (omega)[right of = dots, xshift=1cm]{$N_\omega$};
	 \node (u0) [above of = omega] {}
	 edge [<-] (base)
		edge [<-] (dots)
		edge [<-] (omega);
	 \node (dots2) [right of = omega] {$\dots$}
	 edge [<-] (omega);
	 \node (udots) [right of = u0] {$\dots$}
	 edge [<-] node[below]{$h$}(u0);
	 \node (omega2)[right of = dots2, xshift=1cm]{$N_{\omega\cdot 2}$};
	 \node (u1) [above of = omega2] {}
	 edge [<-] (omega2);
	 \node (uu0) [above of = u1] {}
		edge [<-] (u0)
		edge [<-] (udots)
		edge [<-] (u1);
	 \node (dots3) [right of = omega2]{$\dots$}
	 edge [<-] (omega2);
	 \node (udots2) [right of = u1]{$\dots$}
		edge [<-] (u1);
	 \node (uudots) [right of = uu0]{$\dots$}
		edge [<-] node[below]{$d$}(uu0);
	 \node (omega3)[right of = dots3, xshift=1cm]{$N_{\omega\cdot 3}$};
	 \node (u2) [above of = omega3] {}
	 edge [<-] (omega3);
	 \node (uu1) [above of = u2] {}
		edge [<-] (u2);
	 \node (uuu0) [above of = uu1] {}
		edge [<-] (uu0)
		edge [<-] (uudots)
		edge [<-] (uu1);
	\end{tikzpicture}
\]	
It is immediate now to see that if for all $n\in\omega$ there is a $j \colon V_\lambda\prec V_\lambda$ that has $M_{\omega\cdot n}$ well-founded, in particular there is a $j$ that is $(n+1)$-iterable, and this cofinally reflects the existence of an $e$ that is $n$-iterable.
	
We postpone the proof that ${<}\omega$-iterability cofinally strongly implies the existence of $n$-iterable embeddings for any $n\in\omega$ to the end of the proof of the theorem because it uses different techniques.
	
\begin{claim}
If there exists $j \colon V_\lambda\prec V_\lambda$ that is $\omega$-iterable (therefore $M_{\omega\cdot\omega}$ is well-founded), then there exists $\lambda'<\lambda$ and $e \colon V_{\lambda'}\prec V_{\lambda'}$ that is $n$-iterable for any $n\in\omega$. 
Moreover, the set of such $\lambda'$ is cofinal in $\lambda$.
\end{claim}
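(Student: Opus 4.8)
The plan is to run the construction behind the trees $T_2,T_3,T_4,\dots$ once more, stacked $\omega$ times. Working inside $M_{\omega\cdot\omega}$ (well-founded by hypothesis) we build a single tree $T_\omega$ whose branches produce an infinite staircase of embeddings $e=h_0,h_1,h_2,\dots$ together with, for each $i$, a family $\langle k^{(i)}_m:m\in\omega\rangle$ commuting with the iterates of $h_i$ and satisfying the square-root relation $(k^{(i)}_m)^+((h_i)_m)=h_{i+1}$. The base set is $S=(V_{\kappa_{\omega\cdot\omega}})^{M_{\omega\cdot\omega}}=\bigcup_{n\in\omega}M_{\omega\cdot n}$, which belongs to $M_{\omega\cdot\omega}$ since $\kappa_{\omega\cdot\omega}=\sup_n\lambda_{\omega\cdot n}<\lambda_{\omega\cdot\omega}=M_{\omega\cdot\omega}\cap\Ord$; a node of length $n+1$ carries, in its $\ell$-th block for $\ell\le n$, an approximation $e^\ell=h^\ell_0$ of $e$ and, for each $i\le\ell$, an approximation $h^\ell_i$ of the self-embedding $h_i$ of an initial segment $(V_{\gamma^i_{\ell+1}})^{M_{\omega\cdot\omega}}$ and approximations $k^{(i),\ell}_m$ of the connecting maps, subject to exactly the inclusion, elementarity, commutativity and square-root clauses that define $T_2$ and $T_3$, one copy for each $i$.

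First I would exhibit the intended branch of $T_\omega$ in $V$: set $\gamma^i_n=\kappa_{\omega\cdot i+n}$, $e^n=j\restriction V_{\kappa_n}$, $h^n_i=j_{\omega\cdot i}\restriction(V_{\kappa_{\omega\cdot i+n}})^{M_{\omega\cdot i}}$ (where $j_{\omega\cdot i}$ is the $(\omega\cdot i)$-th iterate map $M_{\omega\cdot i}\to M_{\omega\cdot i}$), and let $k^{(i),n}_m$ be the appropriate restriction of the iteration map from the $m$-th iterate of $h_i$ to its $\omega$-th iterate. Because $j$ is $\omega$-iterable, all these iterates exist, every $M_{\omega\cdot i}$ with $i\le\omega$ is well-founded, and $(V_{\kappa_{\omega\cdot i+n}})^{M_{\omega\cdot i}}=(V_{\kappa_{\omega\cdot i+n}})^{M_{\omega\cdot\omega}}$ as $\kappa_{\omega\cdot i+n}<\lambda_{\omega\cdot i}$; the shifts of the identity $j_{m,\omega}(j_m)=j_\omega$, namely $j_{\omega\cdot i+m,\,\omega\cdot(i+1)}(j_{\omega\cdot i+m})=j_{\omega\cdot(i+1)}$, together with the commutativity of the iteration diagram verify all the clauses. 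Thus $T_\omega$ is ill-founded in $V$; being an element of $M_{\omega\cdot\omega}$ it is ill-founded there too, and we fix a branch. Unions along it give $e=h_0\colon V_{\gamma^0_\omega}\prec V_{\gamma^0_\omega}$ and, for each $i$, $h_i\colon(V_{\gamma^i_\omega})^{M_{\omega\cdot\omega}}\prec(V_{\gamma^i_\omega})^{M_{\omega\cdot\omega}}$ together with cofinal families $k^{(i)}_m$ with $(k^{(i)}_m)^+((h_i)_m)=h_{i+1}$, all commuting; since each $\lambda_{\omega\cdot i}$ is regular in $M_{\omega\cdot\omega}$, $\gamma^i_\omega<\lambda_{\omega\cdot i}$, so in particular $\lambda':=\gamma^0_\omega<\lambda$.

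It remains to see that $e$ is $n$-iterable for every $n$; write $N^{h_i}_\alpha$ for the $\alpha$-th model of the iteration of $h_i$. As in the $T_2$, $T_3$ arguments, the direct-limit property applied to $\langle k^{(i)}_m:m\in\omega\rangle$ produces an elementary $k^{(i)}_\omega\colon N^{h_i}_\omega\prec(V_{\gamma^{i+1}_\omega})^{M_{\omega\cdot\omega}}$ with $(k^{(i)}_\omega)^+((h_i)_\omega)=h_{i+1}$; its target is well-founded because $M_{\omega\cdot\omega}$ is, whence every $h_i$ is $1$-iterable, and, propagating along the iteration, $k^{(i)}_{\omega\cdot s}$ embeds $N^{h_i}_{\omega\cdot s}$ into $N^{h_{i+1}}_{\omega\cdot(s-1)}$ for $s\ge1$. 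Chaining for $i=0,\dots,n-1$ gives elementary embeddings
\[
N^e_{\omega\cdot n}\ \prec\ N^{h_1}_{\omega\cdot(n-1)}\ \prec\ \cdots\ \prec\ N^{h_{n-1}}_{\omega}\ \prec\ (V_{\gamma^n_\omega})^{M_{\omega\cdot\omega}},
\]
and since the last structure is well-founded and elementarity reflects well-foundedness, $N^e_{\omega\cdot n}$ is well-founded, i.e. $e$ is $n$-iterable. As $n$ was arbitrary, $e\colon V_{\lambda'}\prec V_{\lambda'}$ is $n$-iterable for all $n$. The cofinality clause is obtained as in the earlier claims: given $\eta<\lambda$, pick $m$ with $\kappa_m>\eta$, add the requirement $\gamma^0_0>\eta$ to $T_\omega$, and run the intended branch off $j_m$ instead of $j$, so that $\eta<\gamma^0_0\le\gamma^0_\omega=\lambda'$.

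The only genuine obstacle I anticipate is bookkeeping: setting up $T_\omega$ so that infinitely many staircase embeddings and their connecting families are revealed diagonally while every node remains a finite object in the set $S\in M_{\omega\cdot\omega}$, and checking that the intended branch meets all clauses. The conceptual ingredients --- elementarity of unions of increasing chains of partial embeddings, the direct-limit/commuting-family construction of $k^{(i)}_\omega$, and absoluteness of ill-foundedness --- are all already in place from the finite claims.
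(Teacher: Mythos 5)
Your proposal is correct and follows essentially the same route as the paper: build a single tree $T_\omega$ in $M_{\omega\cdot\omega}$ on the set $\bigcup_{n}M_{\omega\cdot n}=(V_{\kappa_{\omega\cdot\omega}})^{M_{\omega\cdot\omega}}$ that glues together the finite-stage trees, with the approximations to the staircase of embeddings and their connecting families introduced diagonally so that every node is a finite object of $M_{\omega\cdot\omega}$ --- exactly the ``rearrange the pace'' device the paper uses to avoid the infinite first coordinate $\langle\kappa_0,\kappa_\omega,\kappa_{\omega+\omega},\dots\rangle$, which cannot lie in $M_{\omega\cdot\omega}$. Your explicit chaining $N^e_{\omega\cdot n}\prec N^{h_1}_{\omega\cdot(n-1)}\prec\cdots\prec(V_{\gamma^n_\omega})^{M_{\omega\cdot\omega}}$ spells out the well-foundedness transfer that the paper leaves implicit, and the cofinality argument matches as well.
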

	
\begin{proof}
We want to build in $M_{\omega\cdot\omega}$ a tree $T_\omega$ that ``glues'' together all the $T_n$ trees, so that its branches will generate for any $n\in\omega$ a 1-iterable embedding $e^n$ in $M_{\omega\cdot n}$ and families of $k^n_m$ that witness that $e^n$ is 1-iterable and such that $(k^n_\omega)^+(e^n)=e^{n+1}$.
\[
	\begin{tikzpicture}
	 \node (base) {$V_\lambda$};
	 \node (dots) [right of = base] {$\dots$}
	 edge [<-] node[below]{$e^0$}(base);
	 \node (omega)[right of = dots, xshift=1cm]{$N_\omega$};
	 \node (u0) [above of = omega] {}
	 edge [<-] (base)
		edge [<-] (dots)
		edge [<-] (omega);
	 \node (dots2) [right of = omega] {$\dots$}
	 edge [<-] (omega);
	 \node (udots) [right of = u0] {$\dots$}
	 edge [<-] node[below]{$e^1$}(u0);
	 \node (omega2)[right of = dots2, xshift=1cm]{$N_{\omega\cdot 2}$};
	 \node (u1) [above of = omega2] {}
	 edge [<-] (omega2);
	 \node (uu0) [above of = u1] {}
		edge [<-] (u0)
		edge [<-] (udots)
		edge [<-] (u1);
	 \node (dots3) [right of = omega2]{$\dots$}
	 edge [<-] (omega2);
	 \node (udots2) [right of = u1]{$\dots$}
		edge [<-] (u1);
	 \node (uudots) [right of = uu0]{$\dots$}
		edge [<-] node[below]{$e^2$}(uu0);
	 \node (omega3)[right of = dots3, xshift=1cm]{$N_{\omega\cdot 3}$};
	 \node (u2) [above of = omega3] {}
	 edge [<-] (omega3);
	 \node (uu1) [above of = u2] {}
		edge [<-] (u2);
	 \node (uuu0) [above of = uu1] {}
		edge [<-] (uu0)
		edge [<-] (uudots)
		edge [<-] (uu1);
	 \node (dots4) [right of = omega3] {$\dots$}
		edge [<-] (omega3);
	 \node (udots3) [right of = u2] {$\dots$}
		edge [<-] (u2);
	 \node (uudots2) [right of = uu1] {$\dots$}
		edge [<-] (uu1);
	 \node (uuudots) [right of = uuu0] {$\dots$}
		edge [<-] (uuu0);
	 \node (fdots) [above of = uuudots] {$\iddots$};
	\end{tikzpicture}
\]	
It is important to define $T_\omega$ so that it is in $M_{\omega\cdot\omega}$, so that the argument of the absoluteness of well-foundedness gives a branch that is in $M_{\omega\cdot\omega}$, and therefore bounded below $\lambda$, so $T_\omega$ should be a subset of a set in $M_{\omega\cdot\omega}$. 
Note that $(V_{\kappa_{\omega\cdot\omega}})^{M_{\omega\cdot\omega}}=\bigcup_{n\in\omega}M_{\omega\cdot n}$, because of the properties of the direct limit and because $\crt(j_{\omega\cdot n,\omega\cdot\omega})=\crt(j_{\omega\cdot n})=\kappa_{\omega\cdot n}=\lambda_{\omega\cdot(n-1)}$ for any $n>0$. 
	
The most immediate approach would be to build all the embeddings $e^n$ and $k^n_m$ at the same time, step by step. 
At every passage, each approximation of the $e^n$ and $k^n_m$ will be actually in $M_{\omega\cdot n}$, as we have seen in the previous claims, therefore they would be all in $M_{\omega\cdot\omega}$. 
This approach, however, has a problem: in the finite cases, each node is a finite sequence of finite sequences, so it suffices to know that all its elements are in $M_{\omega\cdot n}$ to say that the whole tree is contained in it. 
In a $T_\omega$ defined in such a way we have instead infinite sequences, for example the first step would be to decide the critical points of all the $e^n$, and it is not clear why this sequence should be in $M_{\omega\cdot\omega}$. 
If we restrict ourselves only to the sequences that are in $M_{\omega\cdot\omega}$ then we are in trouble, as we possibly cannot then build a branch in $V$: for example the ``natural'' branch generated by $j$ will have the following first element:
\begin{equation*}
\langle \kappa_0,\kappa_\omega,\kappa_{\omega+\omega},\dots\rangle.
\end{equation*}
and this cannot be in $M_{\omega\cdot\omega}$, as the supremum of it is exactly $\kappa_{\omega\cdot\omega}$, that is regular in $M_{\omega\cdot\omega}$. 
The solution is to rearrange the pace at which the approximations are introduced, so that every sequence that appears in the revised tree $T_\omega$ is finite. 
We leave the details to the reader. 
Now the tree $T_\omega$ is on $M_{\omega\cdot\omega}$, and the proof is as before, cofinally strong implication included.
\end{proof}
	
As we have sufficiently analyzed the case of successor and limit of successors, the techniques just presented are enough to go up the hierarchy of the countable ordinals. 
The following claim completes the proof: 
	
\begin{claim}
Let $\nu\leq\omega_1$ be a limit ordinal. 
If there exists a $j \colon V_\lambda\prec V_\lambda$ that is $\alpha$-iterable for any $\alpha<\nu$, then there exists a $\lambda'<\lambda$ such that for any $\alpha<\nu$ there is an $e \colon V_{\lambda'}\prec V_{\lambda'}$ that is $\alpha$-iterable. 
\end{claim}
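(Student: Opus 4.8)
The plan is to repeat, for all $\alpha<\nu$ at once, the tree constructions of the preceding claims, but performed inside the well-founded set
\[
W:=\bigcup_{\alpha<\nu}M_{\omega\cdot\alpha},
\]
which is available precisely because $j$ is $\alpha$-iterable for every $\alpha<\nu$. First I would record the structural facts about $W$. Each $M_{\omega\cdot\alpha}$ with $\alpha<\nu$ is well-founded and is a rank-initial segment of $M_{\omega\cdot\beta}$ for $\alpha<\beta<\nu$, so $W$ is transitive and well-founded and each $M_{\omega\cdot\alpha}$ is a rank-initial segment of $W$. Each $M_{\omega\cdot\alpha}$ is a limit of inaccessibles, hence proves that every well-founded tree has a ranking function; consequently well-foundedness of any tree lying in $W$ is absolute between $W$ and $V$. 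Finally $\mathscr{P}(V_\lambda)^W=\mathscr{P}(V_\lambda)^{M_\omega}$ (a subset of $V_\lambda$ in $W$ already lies in some $M_{\omega\cdot\alpha}$, hence in $(V_{\lambda_\omega})^{M_{\omega\cdot\alpha}}=M_\omega$ when $\alpha\geq 2$, and trivially otherwise), so $\lambda$ is inaccessible, in particular regular, in $W$. When $\nu=\omega_1$ one invokes Proposition~\ref{prop:iterability}: then $\Ithree_{<\omega_1}$ coincides with $\Ithree_\infty$, every $M_\gamma$ is well-founded, and $W=(V_{\kappa_{\omega_1}})^{M_{\omega_1}}$ is a genuine model of $\ZFC$ with $\lambda$ inaccessible.

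Next I would build, inside $W$, a tree $T$ of finite approximations whose branches produce simultaneously a single ordinal $\lambda'$ (the supremum of the ``height'' coordinates of the branch) together with, for every $\alpha<\nu$, the full data of an $\alpha$-iterable embedding $e^{(\alpha)}\colon V_{\lambda'}\prec V_{\lambda'}$. The tree has one ``column'' per $\alpha<\nu$: the $\alpha$-th column encodes, exactly in the style of the trees $T_n$ and $T_\omega$ above, finite approximations of $e^{(\alpha)}$ together with the tower of commuting embeddings witnessing that $e^{(\alpha)}$ is $\alpha$-iterable — the tower bottoming out in a map into the well-founded set $M_{\omega\cdot\alpha}$, and, at limit $\alpha$, using direct-limit witnesses as in the limit-of-successors claim. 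All columns share the same height coordinates $\langle\gamma_n:n\in\omega\rangle$, forcing the common domain $V_{\gamma_\omega}=V_{\lambda'}$. The canonical branch of $T$ in $V$ is read off from $j$: set $\gamma_n=\kappa_n$, fill the $\alpha$-th column using the appropriate restrictions of $j$ and of its iterates $j_{m,\gamma}$ up to $M_{\omega\cdot\alpha}$, and use $\alpha$-iterability of $j$ (well-foundedness of the real $M_{\omega\cdot\alpha}$) to supply the base of each tower; the cross-column links hold by the commutation identities for the iterates of $j$ already established (e.g.\ $j_{m,\omega}(j_m)=j_\omega$ and its iterates). Since $T$ is ill-founded in $V$, absoluteness gives a branch of $T$ in $W$; reading off that branch, taking unions and direct limits, and using that each $M_{\omega\cdot\alpha}$ is well-founded, one obtains for every $\alpha<\nu$ an $\alpha$-iterable $e^{(\alpha)}\colon V_{\lambda'}\prec V_{\lambda'}$, with $\lambda'<\lambda$ because the height coordinates of the $W$-branch form a sequence in $W$ and $\lambda$ is regular in $W$. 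The cofinal version is obtained, exactly as before, by adding the clause $\gamma_0>\eta$ to $T$ and reading the canonical branch off a sufficiently large iterate $j_n$ of $j$.

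The main obstacle is the bookkeeping that makes $T$ an element of $W$. Naively a node of $T$ would be indexed by $\nu$ (for $\nu=\omega_1$, even of transfinite length), and the ``natural'' branch coming from $j$ would begin with a coordinate such as $\langle\kappa_0,\kappa_\omega,\kappa_{\omega\cdot 2},\dots\rangle$, whose supremum $\kappa_{\omega\cdot\omega}$ is regular in $W$, so such sequences cannot belong to $W$. As in the $T_\omega$ claim, one must rearrange the pace at which the columns — and the stages within each column — are introduced, so that every node of $T$ is a genuinely finite object, while still retaining a branch definable from $j$ over $V$; arranging this pacing to mesh with the limit stages of the towers, and, in the case $\nu=\omega_1$, with the uncountably many columns (here one develops only finitely many columns at each finite level of $T$, along a fixed enumeration of $\nu$, so that any branch still meets every column cofinally), is where the real work lies. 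Everything else — the approximation arithmetic in each column, the elementarity of the assembled embeddings, and the transfer of well-foundedness along the tower maps — is routine given the computations already carried out for $T_2,T_3,\dots,T_\omega$. This completes the proof of the third bullet of Theorem~\ref{th:step} (and, taking $\nu=\omega$, of the postponed claim), and with it the theorem.
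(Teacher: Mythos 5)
There is a genuine gap, and it sits exactly at the point you flag as ``where the real work lies.'' Your whole argument hinges on finding a branch of the mega-tree $T$ that is an \emph{element} of a transitive model in which $\lambda$ is regular: that elementhood is the only thing that bounds $\gamma_\omega=\sup_n\gamma_n$ below $\lambda$ (the canonical branch read off from $j$ has $\gamma_n=\kappa_n$, whose supremum is $\lambda$ itself). For the earlier claims this works because $T_n$ is a set in the well-founded $\ZFC$-model $M_{\omega\cdot n}$, and $T_\omega$ is a set in $M_{\omega\cdot\omega}$ --- models whose well-foundedness is exactly the hypothesis being used. Under $\Ithree_{<\nu}$ no such home exists for your $T$: the hypothesis gives no well-founded $M_{\omega\cdot\nu}$ (indeed the second bullet of the theorem says $\Ithree_\nu$ is strictly stronger), and $W=\bigcup_{\alpha<\nu}M_{\omega\cdot\alpha}$, while transitive and well-founded, does not contain $T$ as an element --- the columns for larger $\alpha$ reference the models $M_{\omega\cdot\alpha}$, which are cofinal in $W$, so $T$ is an unbounded subclass of $W$ no matter how you pace the nodes. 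Moreover $W$ need not satisfy replacement, so even a class-tree absoluteness argument over $W$ is unjustified; your stated absoluteness fact only covers trees lying in some single $M_{\omega\cdot\alpha}$. Without a well-founded $\ZFC$-model having $T$ as a member, ``ill-founded in $V$ implies has a branch in the model'' does not apply, and a branch whose finite initial segments merely lie in $W$ gives no bound on $\gamma_\omega$. Separately, the case $\nu=\omega_1$ cannot be repaired by pacing: a branch of a tree of height $\omega$ with finite nodes carries only countably many columns, so it cannot encode witnesses for all $\alpha<\omega_1$ simultaneously, and $\omega_1$ admits no $\omega$-enumeration to pace against.

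The paper avoids trees entirely here, precisely because of this obstruction. It sets $E_\alpha=\{\lambda'<\lambda:\exists k\colon V_{\lambda'}\prec V_{\lambda'}\ \alpha\text{-iterable}\}$, observes that $E_\alpha$ is nonempty and cofinal in $\lambda$ by the already-proved cofinal strong implications, and then shows $E_\alpha$ is definable over $V_\lambda$ from the parameter $\alpha$: the key computation is that for $e\in V_{\kappa_n}$ all the iterated models $N_{\omega\cdot\beta}$ have cardinality $|V_{\lambda'}|$ and hence lie in $V_{\kappa_n}$, so $V_\lambda$ computes iterability of its own embeddings correctly. Consequently $j^+(E_\alpha)=E_\alpha$, the first $\crt(j)$ elements of $E_\alpha$ are fixed points of $j$ below $\crt(j)$, and one intersects the resulting large subsets of $\crt(j)$ over the $\nu\leq\omega_1<\crt(j)$ many indices using regularity of $\crt(j)$. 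You would need to either switch to an argument of this reflective kind or find a genuinely new device for locating the branch of $T$ inside a model where $\lambda$ is regular; as written, the proposal does not close.
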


\begin{proof}
Let $j \colon V_\lambda\prec V_\lambda$ that is $\alpha$-iterable for any $\alpha<\nu$. 
Then, by the claims above, for any $\alpha<\nu$ there is a $\lambda'<\lambda$ and an $e \colon V_{\lambda'}\prec V_{\lambda'}$ that is $\alpha$-iterable. 
We want to prove that there is a single $\lambda'<\lambda$ that works for all the $\alpha<\nu$.
	
For any $\omega\cdot\alpha<\nu$ let 
\begin{equation*}
E_\alpha=\{\lambda'<\lambda : \exists k \colon V_{\lambda'}\prec V_{\lambda'}\text{ that is }\alpha\text{-iterable}\}. 
\end{equation*}
Then $E_\alpha\neq\emptyset$, and we need to prove that $\bigcap_{\omega\cdot\alpha<\nu}E_\alpha\neq\emptyset$. 
Since all the strong implications above are cofinal, $E_\alpha$ is cofinal in $\lambda$. 
We want to prove that $E_\alpha$ is definable in $V_\lambda$ using only $\alpha$ as a parameter. 
If $\lambda'\in E_\alpha$, let $e$ witness that, and let $n\in\omega$ be such that $e\in V_{\kappa_n}$. 
Then $N_\omega$, its $\omega$-iterate, is the set of the $e_{m,\omega}(y)$ with $m\in\omega$ and $y\in V_{\lambda'}$, therefore $|N_\omega|=|V_{\lambda'}|$. 
By induction, $|N_{\omega\cdot\beta}|=|V_{\lambda'}|$ for any $\beta\leq\alpha$, and as $N_{\omega\cdot\beta}$ is transitive this means that $N_{\omega\cdot\beta}\in V_{\kappa_n}$. 
Therefore $e$ is $\alpha$-iterable iff $V_\lambda\vDash e$ is $\alpha$-iterable, i.e., $V_\lambda$ computes correctly the iterability of embeddings inside it. 
Therefore $E_\alpha$ is definable in $V_\lambda$, so $j^+(E_\alpha)=E_\alpha$, and if $\eta<\crt(j)$ and $\beta$ is the $\eta$-th element of $E_\alpha$, then $j(\beta)$ is the $\eta$-th element of $E_\alpha$, i.e., $\beta$, so $\beta<\crt(j)$. 
But then the ordertype of $E_\alpha$ must be bigger than $\crt(j)$, otherwise $E_\alpha$ would be all inside $\crt(j)$ and not cofinal, and the first $\crt(j)$ elements of $E_\alpha$ are smaller than $\crt(j)$. 
This means that $\bigcup_{\alpha<\beta}(E_\alpha\cap\crt(j))\neq\emptyset$, as $\crt(j)$ is regular, and the proposition is proved. 
As $E_\alpha$ is cofinal under $\kappa_0$, by elementarity it is cofinal also under $\lambda$, therefore also cofinal strong implication is proved.
	\end{proof}
\end{proof}

\end{document}